\documentclass[a4paper]{amsart}
\usepackage[margin=1in,headheight=13.6pt]{geometry}
\usepackage[T1]{fontenc}
\usepackage[utf8]{inputenc}
\usepackage{lmodern}
\usepackage[leqno]{amsmath}
\usepackage{amssymb,amsthm,mathtools,mathrsfs}
\usepackage{array,booktabs,longtable}
\usepackage{microtype,needspace}
\usepackage{enumitem}
\usepackage[hidelinks]{hyperref}
\usepackage{url}
\hypersetup{
  pdftitle={An elliptic Calabi-Yau threefold of Mordell-Weil rank twelve},
  pdfauthor={Dominik Burek},
  pdfsubject={Elliptic Calabi-Yau threefolds and Mordell-Weil groups},
  pdfkeywords={Calabi-Yau threefold, Mordell-Weil rank, crepant resolution, K3 surface}
}

\newtheorem{Thm}{Theorem}[section]
\newtheorem{Lem}[Thm]{Lemma}
\newtheorem{Pro}[Thm]{Proposition}
\newtheorem{Cor}[Thm]{Corollary}
\theoremstyle{definition}

\theoremstyle{remark}
\newtheorem{Rem}[Thm]{Remark}
\numberwithin{equation}{section}

\newcommand{\CC}{\mathbb C}
\newcommand{\QQ}{\mathbb Q}
\newcommand{\ZZ}{\mathbb Z}
\newcommand{\PP}{\mathbb P}
\newcommand{\FF}{\mathbb F}
\newcommand{\OO}{\mathcal O}
\newcommand{\LL}{\mathcal L}
\newcommand{\BB}{\mathcal B}
\newcommand{\EE}{\mathscr E}
\newcommand{\WW}{\mathscr W}
\newcommand{\MW}{\operatorname{MW}}
\newcommand{\NS}{\operatorname{NS}}
\newcommand{\Pic}{\operatorname{Pic}}
\newcommand{\rk}{\operatorname{rank}}
\newcommand{\Bl}{\operatorname{Bl}}

\newcommand{\ord}{\operatorname{ord}}
\newcommand{\Hom}{\operatorname{Hom}}

\newcommand{\dd}{\mathop{}\!\mathrm d}
\newcommand{\IV}{\textup{\textbf{IV}}}
\newcommand{\Ione}{\textup{\textbf{I}}_1}
\newcommand{\Aroot}{\mathrm A_2}
\newcommand{\Qquart}{\mathcal Q}
\newcommand{\Pp}[1]{P'_{#1}}

\allowdisplaybreaks[1]
\setlist[enumerate,1]{label=\textup{(\roman*)},leftmargin=*,itemsep=2pt,topsep=4pt}

\title[An elliptic Calabi--Yau threefold of rank twelve]{An elliptic Calabi--Yau threefold of Mordell--Weil rank twelve}
\author[D. Burek]{Dominik Burek}
\address{D. Burek\\Faculty of Mathematics and Computer Science\\Jagiellonian University, \L{}ojasiewicza 6\\30-348 Krak\'ow, Poland}
\email{dominik.burek@uj.edu.pl}
\date{}
\subjclass[2020]{Primary 14J32; Secondary 14J28, 14J27, 14E15, 11G05}
\keywords{Calabi--Yau threefold, elliptic fibration, Mordell--Weil group, crepant resolution, K3 surface}

\begin{document}
\begin{abstract}
We construct a smooth projective Calabi--Yau threefold with a flat elliptic
fibration of Mordell--Weil rank twelve and Hodge numbers
$(h^{1,1},h^{2,1})=(25,1)$.
The construction starts from the Kummer surfaces associated with the
products of a fixed elliptic curve and the level-four modular family.
The threefold is birational to an anticanonical quartic in
$\PP(\OO_{\PP^1}(3)^{\oplus2}\oplus\OO_{\PP^1}(4)\oplus\OO_{\PP^1})$.
We give twelve independent sections and construct a projective crepant
resolution of the associated Weierstrass model.
Varying the fixed elliptic curve gives a family of quartic resolutions
that is locally complete at the original member. The resulting fourfold
is birational over $\CC$ to a diagonal order-four quotient of a product
of four elliptic curves with $j$-invariant $1728$, and every smooth
projective model has Kodaira dimension zero. An algebraic correspondence
identifies the third cohomology of the original quartic resolution with
the tensor product of the first cohomology of the fixed elliptic curve
and the transcendental cohomology of the modular K3 surface. The Galois
representation on the latter factor is associated with a weight-three
CM newform.
\end{abstract}
\maketitle
\enlargethispage{-\baselineskip}

\section{Introduction}\label{sec:intro}

We construct an elliptic Calabi--Yau threefold of Mordell--Weil rank
twelve. For a parameter $z$, consider the elliptic curve
\begin{equation}\label{eq:Ez}
 E_z:\quad \eta^2=\xi\bigl(\xi+(z^2+1)^2\bigr)
                       \bigl(\xi+(z^2-1)^2\bigr).
\end{equation}
We keep $E_2$ fixed and let $E_v$ vary in \eqref{eq:Ez}.
For nonsingular $E_v$, the minimal resolutions of
$(E_2\times E_v)/\{\pm1\}$ form a family of Kummer K3 surfaces, where $-1$
acts on both factors.
The quadratic base change $r=t^2$ in their elliptic pencil, with parameter
$r$ as in Section~\ref{sec:origin}, gives the surfaces
$F^{(4)}_{E_2,E_v}$ of Kumar and Kuwata
\cite[Definition~2.3 and Section~5.4]{KK}. We use their quartic model and
explicit sections to construct an elliptic Calabi--Yau threefold.

Grassi and Weigand constructed smooth elliptic Calabi--Yau threefolds
of Mordell--Weil rank ten \cite{GW}. This is also the largest known rank
reported in \cite{GMPSW}. Theorem~6.1 of that paper gives an upper bound
of eighteen for a smooth elliptic Calabi--Yau threefold with singular
fibers and smooth rational base other than $\PP^2$. This bound applies
to the fibration constructed below, whose base is a blow-up of $\FF_3$.
To the best of our knowledge, the construction below gives the first
smooth projective elliptic Calabi--Yau threefold with Mordell--Weil rank
greater than ten.

To compactify the family of quartic models of these K3 surfaces, we use the twists
$\OO(3)^{\oplus2}\oplus\OO(4)\oplus\OO$ of the ambient projective bundle.
Write $[X_0:X_1:X_2:X_3]$ for its fiber coordinates.
The factors involving $X_0,X_1$ have constant coefficients because $E_2$ is
fixed; the coefficients $(v^2+1)^2$ and $(v^2-1)^2$ in the other two
linear factors have degree four in $v$.
These degrees determine the chosen twists, as explained in
Section~\ref{sec:origin}.
If $M$ denotes this projective bundle, $H=c_1(\OO_M(1))$, and $F$ is
the pullback of a point of $\PP^1$, the quartic has class
$4H+12F=-K_M$. Thus it is an anticanonical hypersurface in $M$.
Its six isolated singularities admit crepant weighted blow-ups. This gives
a smooth projective Calabi--Yau model $Y$.

To obtain a flat elliptic fibration, we use the birational Weierstrass
model. The elliptic parameter $t=X_1/X_3$ of the quartic pencil transforms as
a fiber coordinate on the Hirzebruch surface
\[
 \FF_3=\PP\bigl(\OO_{\PP^1}\oplus\OO_{\PP^1}(3)\bigr).
\]
The projection $\FF_3\to\PP^1$ is a $\PP^1$-bundle.
After blowing up six points of this base, the Weierstrass model has two
curves of transversal $\Aroot$ singularities and ninety-six ordinary double
points. The images of six explicit sections cover all the double points. These
images give global divisors whose blow-ups provide projective small
resolutions. Together with the resolution of the two singular curves,
this produces a flat elliptic fibration with a regular zero section.
We denote this flat model by $X$; it is birational to $Y$, which carries
the original non-flat fibration over $\FF_3$.

We give twelve independent sections, including the six used in the
resolution. The Picard number of the geometric generic K3 fiber gives the
matching upper bound for the rank. We compute the Picard number and
Euler characteristic of the threefold from the elliptic fibration, obtaining
$h^{1,1}=25$ and $h^{2,1}=1$.
In Section~\ref{sec:variation} we let the parameter of the fixed curve
vary. This gives a locally complete one-parameter deformation of the
quartic resolution at $u=2$. We identify its third cohomology with
$H^1(E_2)\otimes T(\mathcal S)$, where $\mathcal S$ is the modular K3
surface, and describe the resulting Galois representations over $\QQ(i)$.
The fourfold obtained by varying both elliptic curves has Kodaira
dimension zero; over $\CC$ it is birational to a diagonal order-four
quotient of a product of four elliptic curves with $j=1728$.

Let
\begin{equation}\label{eq:ABC}
 A(z)=z^8+14z^4+1,\qquad B(z)=z(z^4-1),\qquad
 C(z)=z^{12}-33z^8-33z^4+1.
\end{equation}
We shall repeatedly use the identity
\begin{equation}\label{eq:ABCidentity}
 A(z)^3-C(z)^2=108B(z)^4.
\end{equation}
Put
\begin{equation}\label{eq:constants}
 a=A(2)=481,\qquad b=B(2)=30,\qquad c=C(2)=-4879.
\end{equation}
Consider the elliptic curve over $\CC(v,t)$ given by
\begin{equation}\label{eq:mainW}
 \EE:\quad y^2=x^3-27aA(v)t^4x+
 2916\bigl(b^4t^{10}+B(v)^4t^2\bigr)+54cC(v)t^6.
\end{equation}
Let $S_-$ be the negative section of $\FF_3$, with $S_-^2=-3$, and let $S_+$ be a disjoint section of self-intersection $3$. We use $v$ for the parameter of the ruling and $t$ for the fiber coordinate, with $S_+=\{t=0\}$ and $S_-=\{t=\infty\}$. Define
\begin{equation}\label{eq:base}
 \Sigma=\{0,\infty,1,-1,i,-i\},\qquad
 \BB=\Bl_{\{(v,0):v\in\Sigma\}}\FF_3.
\end{equation}
\Needspace{18\baselineskip}
Our main result realizes \eqref{eq:mainW} as the generic fiber of a flat
elliptic Calabi--Yau threefold over $\BB$.
\begin{Thm}\label{thm:main}
There is a smooth projective threefold $X$ and a flat morphism
\[
 \pi:X\longrightarrow\BB
\]
with connected fibers and a regular section such that the following assertions hold.
\begin{enumerate}
\item $K_X\simeq\OO_X$ and $H^1(X,\OO_X)=H^2(X,\OO_X)=0$.
\item The generic fiber of $\pi$ is the elliptic curve \eqref{eq:mainW}. Its Mordell--Weil group over $\CC(v,t)$ is torsion-free of rank $12$.
\item The only reducible fibers over generic points of discriminant curves are two split fibers of type $\IV$.
\item The Hodge numbers and Euler characteristic are
\[
 h^{1,1}(X)=25,\qquad h^{2,1}(X)=1,\qquad e(X)=48.
\]
\end{enumerate}
\end{Thm}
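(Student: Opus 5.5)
We build $X$ from the Weierstrass model of \eqref{eq:mainW} over $\BB$ and verify the four assertions in order.

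\emph{Weierstrass model.} Set $L=-K_\BB$. On $\FF_3$ we have $-K_{\FF_3}=2S_-+5F$, and $t$ is a fiber coordinate with $S_+=\{t=0\}$. We check that the coefficients $f=-27aA(v)t^4$ and $g=2916(b^4t^{10}+B(v)^4t^2)+54cC(v)t^6$ are sections of $4L$ and $6L$ on $\BB$ after subtracting the multiplicities at the six blown-up points $(v,0)$, $v\in\Sigma$. These are exactly the zeros of $B(v)^4t^2$ on $S_+$, together with $v=\infty$. Over $\FF_3$ itself, $(f,g)$ vanishes to orders at least $(4,6)$ at these six points: $g$ has the factor $t^2B(v)^4$ and $f$ has $t^4$. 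Blowing them up removes the non-minimality, giving a minimal Weierstrass model $W\to\BB$ with $K_W\simeq\OO_W$.

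Using \eqref{eq:ABCidentity}, the discriminant $\Delta=4f^3+27g^2$ factors. We locate its components and the vanishing orders of $(f,g,\Delta)$ along each. Along the two exceptional curves over $v=0,\infty$ (or the appropriate strict transforms) we expect orders $(\ge2,2,4)$, giving type $\IV$. Split type is checked by verifying that $g/t^{2}$ restricted to the curve is a square. Every other component has $\ord\Delta=1$ and is of type $\Ione$. This already gives (iii), and it explains the two curves of transversal $\Aroot$ singularities. The $96$ nodes lie over points where $\Ione$ components meet the other discriminant components; we count them by intersection numbers on $\BB$.

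\emph{Resolution.} We blow up the two $\Aroot$ curves crepantly, twice each, along the reduced singular locus; this is projective. For the nodes, we take the six sections from the list of twelve whose images in $W$ pass through all $96$ double points (each node lying on one such image). These sections are Weil, non-Cartier divisors at the nodes, so blowing them up successively gives projective small resolutions. Because the resolution is small over the nodes and crepant over the curves, $K_X\simeq\OO_X$. Flatness follows because no fiber acquires a divisor. For (i), $X$ is smooth and birational to $Y$, so we may use rationality of $\BB$: $R^1\pi_*\OO_X\simeq L^{-1}$, and the Leray spectral sequence gives $H^1(\OO_X)=H^1(\OO_\BB)\oplus H^0(L^{-1})=0$ and $H^2(\OO_X)=H^0(\BB,K_\BB)=0$ after the usual vanishing.

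\emph{Sections and rank.} We write down twelve explicit points of $\EE(\CC(v,t))$ in the form $x=$ polynomial in $t$ with coefficients in $\QQ(i,\sqrt[4]{\cdot})$, transported from Kumar--Kuwata's sections of $F^{(4)}_{E_2,E_v}$. Independence comes from a $12\times12$ height pairing matrix with nonzero determinant, computed on a specialization $v=v_0$ over $\CC(t)$, since specialization is injective. For the upper bound, restricting to the K3 surface over a generic $v$ gives Shioda--Tate: $\rho=2+\sum(\text{fiber contributions})+\rk$. The Picard number of the geometric generic $F^{(4)}$ is $18$ (as in \cite{KK}), and the fiber contributions along $t$ are two $\IV$ fibers at $t=0,\infty$, contributing $4$. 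Hence $\rk\le 18-2-4=12$. Torsion-freeness follows because torsion would inject into every specialization; the discriminant of the lattice plus a $\IV$-fiber argument rules out $3$-torsion, and $2$-torsion is excluded since $x^3+fx+g$ is irreducible.

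\emph{Hodge numbers.} By Shioda--Tate--Wazir, $h^{1,1}(X)=\rho(\BB)+1+\rk\MW+\#\{\text{fibral divisors}\}=(2+6)+1+12+4=25$. The Euler characteristic is computed by stratifying: $e(X)$ is the sum over strata of the base of the Euler characteristic of the fiber over the stratum. The generic stratum contributes $0$. $\Ione$ curves contribute $1$ times the Euler characteristic of their smooth locus, $\IV$ curves contribute $4$, and we add corrections at collision points and the $96$ small-resolved nodes (fiber $\Ione$ with an extra $\PP^1$). We expect $e=48$. Then $h^{2,1}=h^{1,1}-e/2=1$.

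The main obstacle is this Euler characteristic bookkeeping at the collision points, together with confirming that the six chosen sections cover all $96$ nodes. As a cross-check we would compare with the count $e=2(h^{1,1}-h^{2,1})$ using $h^{2,1}=1$ from the one-parameter deformation identified in Section~\ref{sec:variation}.
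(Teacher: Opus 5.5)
\textbf{There is a genuine gap in (iv).} You never establish $e(X)=48$: you write that you ``expect'' it and call the bookkeeping the main obstacle. Moreover, the stratification you set up would not produce $48$.

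\emph{Where the $\IV$ curves are.} They are $C_\pm$, the strict transforms of $S_+=\{t=0\}$ and $S_-=\{t=\infty\}$, each of self-intersection $-3$. They are not the exceptional curves over $v=0,\infty$; over the $E_i$ the generic fiber is smooth.

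\emph{No collisions with the $\IV$ curves.} The residual discriminant $D=-12K_\BB-4C_+-4C_-$ satisfies $D\cdot C_\pm=0$ and is in fact disjoint from $C_+\cup C_-$. So there are no $\Ione$--$\IV$ collision points at all.

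\emph{The actual special points.} These are the singular points of $D$:
\begin{enumerate}
\item $96$ ordinary nodes. By the Morse lemma with parameters, these are exactly the double points of $\WW$.
\item $64$ ordinary cusps where $f=g=0$. They come from the eight roots of $A$, two values of $t^4$ from \eqref{eq:cuspquadratic}, and four values of $t$ each. The fibers there are cuspidal, with Euler characteristic two.
\end{enumerate}
Your proposal never mentions the cusps. With $p_a(D)=177$ (from $D^2=384$ and $K_\BB\cdot D=-32$), one gets $e(D^{\mathrm{sm}})=2-2\cdot177+64=-288$ and
$e(X)=16-288+2\cdot96+2\cdot64=48$. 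Omitting the cusps would give $-144$.

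\emph{Counting the nodes.} The nodes cannot be ``counted by intersection numbers on $\BB$''. Intersection numbers determine $p_a(D)$, not the number of singular points of $D$. The paper solves the critical-point equations explicitly. It finds four points over each of the $24$ roots of $J(v)=a^3B(v)^4-b^4A(v)^3$, and checks that the Hessian is nondegenerate. The small resolutions by section graphs also require that nondegeneracy.

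\textbf{Further points.}
\begin{enumerate}
\item Your proposed cross-check with $h^{2,1}=1$ from Section~\ref{sec:variation} is circular. There, $b_3(Y_2)=4$ is deduced from $X$ via Batyrev's theorem. The Kodaira--Spencer argument alone gives only $h^{2,1}\ge1$.
\item A transversal $\Aroot$ curve is resolved by a single blow-up, not two.
\item Your torsion argument treats only the primes $2$ and $3$. Shioda's formula $\langle P,P\rangle\ge 4-\tfrac{2}{3}-\tfrac{2}{3}>0$ for $P\ne O$ excludes all torsion at once.
\item The sections are defined over $\QQ(i)(v,t)$; no fourth roots are needed.
\end{enumerate}

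\textbf{A valid alternative for (i).} Your Leray argument is a legitimate replacement for the paper's passage through $Y$. Since $\WW$ has rational singularities, $R^1\pi_*\OO_X\simeq\OO_\BB(K_\BB)$. Then $H^1(\OO_X)$ is built from $H^1(\OO_\BB)$ and $H^0(K_\BB)$, and $H^2(\OO_X)$ from $H^2(\OO_\BB)$ and $H^1(K_\BB)$. All of these vanish on the rational surface $\BB$.
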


Section~\ref{sec:origin} derives the quartic from the product of elliptic
curves and explains the choice of the ambient bundle.
Sections~\ref{sec:quartic} and~\ref{sec:resolution} describe the quartic
model and its crepant resolution $Y$. In Section~\ref{sec:MW} we identify
the elliptic pencil and compute its Mordell--Weil rank.
Sections~\ref{sec:W} and~\ref{sec:nodes} construct the flat model $X$, and
Section~\ref{sec:hodge} gives its Hodge numbers. The explicit sections,
height pairing and incidences with the ordinary double points are collected
in the appendices.

\subsection{Conventions}
All geometric statements are over $\CC$, unless a smaller field is explicitly specified. A Calabi--Yau threefold means a smooth projective threefold with trivial canonical bundle and $H^1(\OO)=H^2(\OO)=0$. We write $\PP(\mathcal V)$ for the bundle of one-dimensional subspaces of a vector bundle $\mathcal V$; thus $\OO_{\PP(\mathcal V)}(-1)$ is the tautological subbundle. We use additive notation for line bundles when writing divisor classes. The symbol $e$ denotes the topological Euler characteristic. Mordell--Weil groups are groups of rational sections, or equivalently groups of rational points on the generic elliptic fiber. The zero section is denoted by $O$.

\subsection*{Acknowledgements}
We thank S{\l}awomir Cynk and Matthias Sch\"utt for their careful reading
of the manuscript and valuable comments which helped to improve the
exposition. We also thank Matthias Sch\"utt for questions about the
variation of the fixed elliptic curve, the birational type of the
resulting fourfold, and the associated Galois representations.
The example arose from discussions with ChatGPT (OpenAI); ChatGPT and
Codex were also used to assist with symbolic computations and the
drafting and revision of the manuscript. The author is responsible for
the mathematical content.

\section{The Kumar--Kuwata quartic}\label{sec:origin}

\subsection{The elliptic curves and their invariants}
The family \eqref{eq:Ez} is the model in
\cite[Section~5.1]{KK}. The full two-torsion is visible in the three roots of the cubic. Moreover,
all pairwise differences of these roots are squares in $\CC(z)$:
they are, up to sign, $(z^2+1)^2$, $(z^2-1)^2$, and $4z^2$.
The usual halving formulas for a cubic with three rational roots therefore
give its full four-torsion over $\CC(z)$. For instance, the points above
$\xi=(z^2+1)(z^2-1)$ have
$\eta=\pm2z^2(z^4-1)$ and double to $(0,0)$.
The sections used below have coordinates in $\QQ(i)(v,t)$; their
formulas are given in Appendix~\ref{app:sections}.

Write the cubic as
$\xi^3+a_2\xi^2+a_4\xi$, where
\[
 a_2=2(z^4+1),\qquad a_4=(z^4-1)^2.
\]
The formulas $c_4=16(a_2^2-3a_4)$ and
$c_6=-64a_2^3+288a_2a_4$ give
\begin{equation}\label{eq:modularinvariants}
 c_4(E_z)=16A(z),\qquad c_6(E_z)=64C(z),\qquad
 \Delta(E_z)=256B(z)^4.
\end{equation}
Here the last equality also follows from
$\Delta=16a_4^2(a_2^2-4a_4)$.
Consequently $c_4^3-c_6^2=1728\Delta$ is precisely
\eqref{eq:ABCidentity}, and
\begin{equation}\label{eq:jEz}
 j(E_z)=\frac{16A(z)^3}{B(z)^4}.
\end{equation}

The homogeneous forms corresponding to $A,B,C$ have degrees $8,6,12$.
For $B$ the homogenization is $UV(U^4-V^4)$, so its zeros are precisely
$\Sigma=\{0,\infty,1,-1,i,-i\}$, as in \eqref{eq:base}.
The invariant $c_4$ is nonzero there and the minimal discriminant has
order four at each point. Thus the minimal elliptic surface
$\varphi:\mathcal S\to\PP^1_z$ has six fibers of type $\mathrm I_4$.
Its \emph{fundamental line bundle} is the line bundle on $\PP^1_z$
\[
 \mathcal L=(R^1\varphi_*\OO_{\mathcal S})^\vee
       \simeq\varphi_*\omega_{\mathcal S/\PP^1}.
\]
The isomorphism is relative duality: over a smooth fiber $E_z$, its
fiber is $H^0(E_z,\omega_{E_z})$. This bundle also determines how the
local Weierstrass equations glue: $a_j$ is a section of $\mathcal L^j$.
Here the coordinates at infinity are $\xi_\infty=w^4\xi$ and
$\eta_\infty=w^6\eta$, where $w=1/z$, so
$\mathcal L\simeq\OO_{\PP^1}(2)$. Accordingly, $a_2$ and $a_4$
homogenize to degrees four and eight.

The canonical bundle formula \cite[Theorem~6.8]{SS} gives
\[
 \omega_{\mathcal S}\simeq
 \varphi^*(\omega_{\PP^1}\otimes\mathcal L)\simeq\OO_{\mathcal S}.
\]
Moreover, $R^1\varphi_*\OO_{\mathcal S}\simeq\OO_{\PP^1}(-2)$,
so the Leray sequence gives $H^1(\mathcal S,\OO_{\mathcal S})=0$.
Thus $\mathcal S$ is a K3 surface.

In fact its full Mordell--Weil group is
\[
 E_z(\CC(z))\simeq(\ZZ/4\ZZ)^2.
\]
Indeed, the six fibers contribute $6\cdot3$ to the Shioda--Tate formula,
so $\rho(\mathcal S)=20$ and the rank is zero. The component groups have
exponent four. Hence the multiple $4P$ of a torsion section meets the
identity component of every fiber; the height formula then forces
$4P=O$. The full four-torsion exhibited above is therefore the whole
group.

\subsection{From a product of curves to the quartic}
Let $u,v$ be independent parameters and put
$h_z(T)=T(T+(z^2+1)^2)(T+(z^2-1)^2)$.
On $E_u\times E_v$, simultaneous negation of the two elliptic coordinates
fixes $\xi,\zeta$ and the ratio $r=\eta_v/\eta_u$. The invariant function
field has equation
\begin{equation}\label{eq:originKummer}
 h_v(\zeta)=r^2h_u(\xi).
\end{equation}
Its minimal smooth model is the Kummer surface of $E_u\times E_v$.
Replace $r$ by $t^2$. With
$\xi=X_0/X_1$, $\zeta=X_2/X_3$, and $t=X_1/X_3$, the resulting equation is
\begin{equation}\label{eq:originquartic}
 X_2X_3\bigl(X_2+(v^2+1)^2X_3\bigr)\bigl(X_2+(v^2-1)^2X_3\bigr)
 =X_0X_1\bigl(X_0+(u^2+1)^2X_1\bigr)\bigl(X_0+(u^2-1)^2X_1\bigr).
\end{equation}
Indeed the two cubic denominators contribute $X_3^3$ and $X_1^3$, and
$t^4=(X_1/X_3)^4$ supplies the remaining factor $X_1/X_3$.
This is the quartic model of
\cite[Section~5.4, equation~(5.2)]{KK}; its pencil is $t=X_1/X_3$.
Generically it contains exactly sixteen lines, as described there.
They are the intersections of one of the four planes
\[
 X_0=0,\quad X_1=0,\quad X_0+(u^2+1)^2X_1=0,\quad
 X_0+(u^2-1)^2X_1=0
\]
with one of the four planes
\[
 X_2=0,\quad X_3=0,\quad X_2+(v^2+1)^2X_3=0,\quad
 X_2+(v^2-1)^2X_3=0.
\]
The planes $X_1=tX_3$ contain the line $X_1=X_3=0$.
For $u=2$, removing this fixed line from their intersections with the
quartic leaves the plane cubic written in \eqref{eq:cubic}.
For $u=2$ the factors $(u^2+1)^2$ and $(u^2-1)^2$ are $25$ and $9$,
as in
\eqref{eq:quarticcoeff}.

Following \cite[Definitions~2.1 and~2.3]{KK}, we write $F^{(n)}$ for the
elliptic surface obtained from the Inose surface $F^{(1)}$ by the cyclic
base change $t_1=t_n^n$. In this notation \eqref{eq:originKummer} is the $F^{(2)}$
pencil and \eqref{eq:originquartic} is $F^{(4)}$.
For $u=2$, Proposition~\ref{prop:genericidentification} identifies
the generic member of this residual cubic pencil with \eqref{eq:mainW}
over $\CC(v,t)$, taking $X_0=X_2=0$ as the zero section.

\subsection{The choice of the projective bundle}
Write $p:M\to\PP^1$ for the ambient projective bundle to be chosen.
By base degree $d$ for a fiber coordinate we mean the transition rule
$X_j'=w^dX_j$ on the overlap $w=1/v$. Equivalently, on the projective
bundle it is a section of $\OO_M(1)\otimes p^*\OO_{\PP^1}(d)$.
The coefficients $(v^2\pm1)^2$ have degree four. Give $X_3$ base degree
zero and $X_2$ base degree four, so that
$X_2+(v^2\pm1)^2X_3$ is homogeneous of base degree four.
The left side of \eqref{eq:originquartic} then has base degree twelve.
Once $u$ is fixed, the four factors on the right have constant
coefficients. Giving $X_0$ and $X_1$ base degree three therefore
balances the equation. This gives the ambient bundle
\[
 M=\PP\bigl(\OO_{\PP^1}(3)^{\oplus2}\oplus
             \OO_{\PP^1}(4)\oplus\OO_{\PP^1}\bigr),
 \qquad p:M\to\PP^1.
\]
Let $H=c_1(\OO_M(1))$ and let $F$ be the pullback of a point of $\PP^1$.
The equation has fiber degree four and base degree twelve, so its class
is $4H+12F$. The canonical bundle formula gives
\[
 -K_M=4H+(3+3+4+0+2)F=4H+12F.
\]
Thus the quartic is anticanonical in $M$.
Proposition~\ref{prop:anticanonical} checks the equation at infinity
and applies adjunction to its dualizing sheaf.

The difference between the base degrees of $X_1$ and $X_3$ also explains
the ruled base: $t=X_1/X_3$ transforms by $t'=w^3t$, so it is a fiber
coordinate on $\FF_3$.

\section{An anticanonical quartic model}\label{sec:quartic}

Set
\begin{equation}\label{eq:quarticcoeff}
 \alpha(v)=(v^2+1)^2,\qquad \beta(v)=(v^2-1)^2,
 \qquad q(X_0,X_1)=X_0X_1(X_0+25X_1)(X_0+9X_1).
\end{equation}
Let
\[
 M=\PP\bigl(\OO_{\PP^1}(3)\oplus\OO_{\PP^1}(3)\oplus\OO_{\PP^1}(4)\oplus\OO_{\PP^1}\bigr)
\]
with projection $p:M\to\PP^1$ and fiber coordinates $[X_0:X_1:X_2:X_3]$. On the affine chart with base parameter $v$, define $\Qquart\subset M$ by
\begin{equation}\label{eq:quartic}
 X_2X_3\bigl(X_2+\alpha(v)X_3\bigr)\bigl(X_2+\beta(v)X_3\bigr)=q(X_0,X_1).
\end{equation}

\begin{Pro}\label{prop:anticanonical}
The hypersurface $\Qquart\subset M$ defined by \eqref{eq:quartic}
is integral and anticanonical. Its dualizing sheaf is trivial.
\end{Pro}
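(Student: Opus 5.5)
The plan has three parts. First, check that \eqref{eq:quartic} glues to a global section of $\OO_M(4)\otimes p^*\OO_{\PP^1}(12)$. Then deduce triviality of the dualizing sheaf by adjunction. Finally, prove integrality by a factorization argument on a generic fiber.

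\emph{Gluing.} On the overlap $w=1/v$ the fiber coordinates transform as $X_0'=w^3X_0$, $X_1'=w^3X_1$, $X_2'=w^4X_2$, $X_3'=X_3$. Put $\alpha^*(w)=w^4\alpha(1/w)=(1+w^2)^2$ and $\beta^*(w)=(1-w^2)^2$. Then
\[
 X_2'X_3'\bigl(X_2'+\alpha^*(w)X_3'\bigr)\bigl(X_2'+\beta^*(w)X_3'\bigr)=w^{12}\,X_2X_3\bigl(X_2+\alpha X_3\bigr)\bigl(X_2+\beta X_3\bigr),
\]
and likewise $q(X_0',X_1')=w^{12}q(X_0,X_1)$, since $q$ is homogeneous of degree four in $X_0,X_1$, each of base degree three. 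So the difference of the two sides is a section of $\OO_M(4H+12F)$. Its chart at infinity is the polynomial equation of the same shape, with $(1\pm w^2)^2$ in place of $\alpha,\beta$.

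\emph{Anticanonical.} By the relative Euler sequence, $K_M=-4H+p^*(K_{\PP^1}+\det\mathcal V)$ with $\det\mathcal V=\OO(10)$, where $\mathcal V=\OO_{\PP^1}(3)^{\oplus2}\oplus\OO_{\PP^1}(4)\oplus\OO_{\PP^1}$. One must check the sign against the paper's convention for $\PP(\mathcal V)$ as lines; the paper's computation $-K_M=4H+12F$ is the intended outcome, and I will verify it. Thus $[\Qquart]=-K_M$.

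\emph{Dualizing sheaf.} $\Qquart$ is an effective Cartier divisor in the smooth variety $M$, so it is Gorenstein. Adjunction gives $\omega_{\Qquart}=(K_M+\Qquart)|_{\Qquart}=\OO_{\Qquart}$. This uses only that $\Qquart$ is a Cartier divisor, not normality.

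\emph{Integrality.} Since $M$ is smooth and $\Qquart$ is a Cartier divisor, $\Qquart$ is Cohen--Macaulay, hence has no embedded components. It therefore suffices to show that the defining section is irreducible and reduced, i.e.\ that the divisor is prime. No fiber $F$ is a component, because the equation restricted to any fiber is a nonzero quartic (the right side $q$ never vanishes identically). Hence every component dominates $\PP^1$, and it suffices to show that the generic fiber, a quartic surface over $\CC(v)$, is geometrically integral. For that it is enough to find one closed fiber, say $v=2$, where the quartic surface in $\PP^3$ is integral. A factorization of the generic equation specializes to a factorization there, because the family is flat and the fiber is a quartic of degree four. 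I must also exclude a multiple component, which the same specialization argument handles.

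Integrality of a surface of the form $f(X_2,X_3)=q(X_0,X_1)$ with $f,q$ binary quartics with distinct roots is standard: it is a smooth Fermat-type "join" surface. Concretely, its singular locus is where both gradients vanish, which forces $X_2=X_3=0$ and $X_0=X_1=0$. So the surface is smooth, and a smooth hypersurface of $\PP^3$ is irreducible because any two components would meet in singular points.

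At $v=2$ the roots of $f$ are $0,\infty,-25,-9$, and those of $q$ are likewise distinct, so the fiber is smooth and integral. Indeed this holds for all $v\notin\Sigma$. The main obstacle is bookkeeping: justifying that integrality of one fiber implies integrality of the total space. I will do this via the no-vertical-component remark combined with openness of geometric integrality in flat families, using the fact that a generically integral divisor with no vertical components is integral.
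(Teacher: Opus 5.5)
Your gluing, adjunction and integrality steps follow the paper's proof and are sound. For integrality you can even skip the openness theorem: every component of $\Qquart$ is horizontal, so it restricts to a nonzero effective divisor on the fiber over $v=2$, and integrality of that smooth quartic forces a single component of multiplicity one.

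The problem is the anticanonical step, which is the substance of the statement. The formula $K_M=-4H+p^*(K_{\PP^1}+\det\mathcal V)$ that you wrote is the one for Grothendieck's projectivization of rank-one quotients. The paper uses the convention in which $\OO_{\PP(\mathcal V)}(-1)$ is the tautological subbundle. There the relative Euler sequence is $0\to\OO_M\to p^*\mathcal V\otimes\OO_M(1)\to T_{M/\PP^1}\to0$, whence $K_M=-4H+p^*(K_{\PP^1}-\det\mathcal V)=-4H-12F$. Taken literally, your formula gives $-K_M=4H-8F$. That is not the class $4H+12F$ you computed for the quartic, so as written $[\Qquart]=-K_M$ does not follow, and ``I will verify it'' does not supply the missing computation.

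Both classes must be computed in one convention. Your $4H+12F$ uses that $\{X_j=0\}$ has class $H+d_jF$, which is the lines convention. In the quotient convention the same transition rules describe $\PP(\mathcal V^\vee)$, and $\det\mathcal V^\vee=\OO(-10)$ again yields $-12F$.

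A convention-free way to close the gap is to compute divisors of explicit objects. Let $D_3=\{X_3=0\}$ and let $F_\infty$ be the fiber at infinity. The rational $4$-form $\dd v\wedge\dd(X_0/X_3)\wedge\dd(X_1/X_3)\wedge\dd(X_2/X_3)$ has no zeros and a pole of order $4$ along $D_3$. Along $F_\infty$ it has a pole of order $2+3+3+4=12$, since $X_j/X_3=w^{-d_j}X_j'/X_3'$ with $(d_0,d_1,d_2)=(3,3,4)$ and $\dd v=-w^{-2}\dd w$. Hence $K_M\sim-4D_3-12F$. Your $w^{12}$ transformation shows that the defining equation divided by $X_3^4$ has divisor $\Qquart-4D_3-12F_\infty$. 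Therefore $\Qquart\sim4D_3+12F\sim-K_M$.
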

\begin{proof}
On the chart $w=1/v$, use the transition functions
\begin{equation}\label{eq:bundletransition}
 X_0'=w^3X_0,\qquad X_1'=w^3X_1,\qquad X_2'=w^4X_2,\qquad X_3'=X_3.
\end{equation}
Since $w^4\alpha(1/w)=\alpha(w)$ and $w^4\beta(1/w)=\beta(w)$, both sides of \eqref{eq:quartic} are multiplied by $w^{12}$. Thus the equation is global, including the fiber over infinity.

Let $H=c_1(\OO_M(1))$ be the relative hyperplane class and let $F$ be the pullback of a point of $\PP^1$. The coordinate divisors have classes $H+3F,H+3F,H+4F,H$. Consequently
\[
 [\Qquart]=4H+12F.
\]
With our convention for projectivization,
\[
 K_M=-4H+p^*\bigl(K_{\PP^1}-c_1(\OO(3)^{\oplus2}\oplus\OO(4)\oplus\OO)\bigr)
 =-4H-12F.
\]
The hypersurface $\Qquart$ is an effective Cartier divisor in the
smooth fourfold $M$, and hence is Gorenstein. Adjunction for its
dualizing sheaf gives
\[
 \omega_{\Qquart}\simeq
 \bigl(\omega_M\otimes\OO_M(\Qquart)\bigr)|_{\Qquart}
 \simeq\OO_{\Qquart}.
\]
This does not require $\Qquart$ to be smooth. The general fiber is a smooth quartic surface, by the calculation below, hence is integral. No entire fiber of $M\to\PP^1$ is contained in \eqref{eq:quartic}, since $q$ is not the zero polynomial. It follows that $\Qquart$ is integral.
\end{proof}

\begin{Lem}\label{lem:sixsing}
The singular locus of $\Qquart$ consists of six points, one over each element of $\Sigma$. Outside these six fibers, the map $\Qquart\to\PP^1_v$ is smooth with quartic K3 fibers.
\end{Lem}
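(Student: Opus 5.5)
The plan is to work fiberwise and then examine the total space. Write the equation as $G=P_v(X_2,X_3)-q(X_0,X_1)$, where $P_v(X_2,X_3)=X_2X_3(X_2+\alpha X_3)(X_2+\beta X_3)$. For fixed $v$ the fiber $\Qquart_v\subset\PP^3$ is a quartic of ``separated variables'' type $P_v(X_2,X_3)=q(X_0,X_1)$. Its partial derivatives in $X_0,X_1$ involve only $q$, and those in $X_2,X_3$ involve only $P_v$. A point is therefore singular on the fiber exactly when $\nabla q(X_0,X_1)=0$ and $\nabla P_v(X_2,X_3)=0$. By Euler's relation, a binary form has vanishing gradient at a nonzero vector only at a multiple root, and its gradient vanishes identically at the origin. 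The roots of $q$ are $0,\infty,-25,-9$, which are distinct, so $\nabla q=0$ forces $X_0=X_1=0$. The fiber is then singular precisely at points $[0:0:X_2:X_3]$ where $(X_2:X_3)$ is a multiple root of $P_v$ (the equation then holds automatically, since both sides vanish).

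Next I determine when $P_v$ has a multiple root. Its roots are $0$, $\infty$, $-\alpha$ and $-\beta$. A coincidence occurs exactly when $\alpha=0$, $\beta=0$ or $\alpha=\beta$, that is, when $v^2=-1$, $v^2=1$ or $v=0$, together with $v=\infty$, which I check in the chart $w=1/v$ of \eqref{eq:bundletransition}. This gives $\Sigma$. At each such $v$ exactly two roots collide, never three. For instance, at $v=0$ we have $\alpha=\beta=1$, and the root $-1$ is double. So each singular fiber has exactly one singular point $[0:0:X_2:X_3]$, and it is an $\mathrm A$-type point. For $v\notin\Sigma$ the fiber is a smooth quartic surface, hence K3. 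Since the fiber is smooth there, the map $\Qquart\to\PP^1_v$ is smooth over the complement of $\Sigma$, and $\Qquart$ itself is smooth along those fibers.

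The remaining step is to show that over $v\in\Sigma$ the total space is singular only at the fiber's singular point. It is also worth determining whether that point is genuinely singular on $\Qquart$, since the lemma asserts six singular points. A point that is smooth on its fiber is smooth on $\Qquart$, so only the six candidate points matter. At such a point $p$ the fiber gradient vanishes, and $\partial G/\partial v=\partial_v P_v(X_2,X_3)$. For example, at $v=0$ and $p=[0:0:-1:1]$ we get $\partial_vP=X_2X_3\bigl(\alpha'(X_2+\beta X_3)+\beta'(X_2+\alpha X_3)\bigr)$. Both brackets vanish at $p$, and moreover $\alpha'(0)=\beta'(0)=0$. At $v=\pm1$ and $v=\pm i$ a root collides with $0$ with $\alpha$ or $\beta$ vanishing to order $2$, so the derivative again vanishes. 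At $v=\infty$ the behaviour is symmetric via $w^4\alpha(1/w)=\alpha(w)$.

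So $dG=0$ at all six points, and $\Qquart$ is singular exactly there. This gives six isolated singular points, one over each element of $\Sigma$. The main care point is the bookkeeping at $v=\infty$, where the chart change in \eqref{eq:bundletransition} must be used to confirm both that the collision there is a simple double root and that $\partial_w G$ vanishes.
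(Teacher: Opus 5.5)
Your proof is correct and is essentially the paper's argument: separated variables force $X_0=X_1=0$ together with a double root of the $(X_2,X_3)$-quartic; the collisions $\alpha=0$, $\beta=0$, $\alpha=\beta$, plus $v=\infty$ via \eqref{eq:bundletransition}, give $\Sigma$; and the vanishing of the base derivative at the collision point makes each of these six points singular on the threefold. One side remark is wrong, although the lemma does not depend on it. Locally the fiber singularity is $z^2=q(x,y)$ up to a unit and higher-order terms, with $q$ a quartic with four distinct roots. This is a corank-two simple elliptic double point of Milnor number $9$ (compare Remark~\ref{rem:quarticEuler}), not an $\mathrm A$-type point; also, your formula for $\partial_vP$ is missing a factor $X_3$, which is harmless.
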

\begin{proof}
The binary quartic $q$ has four distinct projective zeros. Its two partial derivatives can therefore vanish simultaneously only at $X_0=X_1=0$. The other binary quartic in \eqref{eq:quartic} has roots
\[
 X_3=0,\qquad X_2=0,\qquad X_2=-\alpha(v)X_3,\qquad X_2=-\beta(v)X_3.
\]
A repeated root occurs precisely when $\alpha=0$, $\beta=0$, or $\alpha=\beta$. On the finite chart this gives $v=\pm i$, $v=\pm1$, and $v=0$, respectively. The chart \eqref{eq:bundletransition} gives the additional value $v=\infty$.

At each of these values precisely two roots coincide. Their difference vanishes to order two in the base parameter. The base derivative of the defining equation also vanishes at the collision point, so it is a singular point of the threefold. There is no collision at $X_3=0$. The same partial derivative argument excludes all other singular points, both on the total space and on its fibers.
\end{proof}

Since $\Qquart$ is a hypersurface with isolated singularities, it is normal. We shall use the notation $\Qquart_{v_0}$ for its quartic fiber over $v_0$.

\section{A crepant resolution of the quartic model}\label{sec:resolution}

Each singularity in Lemma~\ref{lem:sixsing} is resolved by a weighted
blow-up with weights $(1,1,1,2)$. We begin with the local calculation.

\begin{Lem}\label{lem:weightedblowup}
Let a hypersurface germ in a smooth fourfold have local coordinates $(x,y,s,z)$ and an equation whose lowest weighted part, for weights $(1,1,1,2)$, is
\begin{equation}\label{eq:weightedinitial}
 \lambda z^2+\mu s^4-q(x,y),\qquad \lambda\mu\ne0,
\end{equation}
where $q$ is a binary quartic with four distinct projective zeros. The weighted blow-up of weights $(1,1,1,2)$ has smooth strict transform and is crepant. Its exceptional divisor is a smooth del Pezzo surface of degree two. This blow-up is projective and can be realized algebraically.
\end{Lem}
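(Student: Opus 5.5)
The approach is to work in the toric charts of the weighted blow-up of $\CC^4$ with weights $(1,1,1,2)$ and check smoothness, the discrepancy, and the geometry of the exceptional divisor directly. Write the equation as $f=f_6+f_{\ge7}$, where $f_6=\lambda z^2+\mu s^4-q(x,y)$ is the weighted-homogeneous initial part of weighted degree $4$. Here I assign $\deg x=\deg y=\deg s=1$ and $\deg z=2$, so $f_6$ is homogeneous of degree $4$, and the higher terms have weighted degree $\ge5$. The ambient weighted blow-up $W\to\CC^4$ has exceptional divisor $\PP(1,1,1,2)$. Its discrepancy is $a=\sum w_i-1=4$, so $K_W=\sigma^*K+4E_W$. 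The strict transform $\tilde V$ of the hypersurface satisfies $\tilde V=\sigma^*V-4E_W$, because the multiplicity of $f$ in weighted degree equals $4$. Adjunction then gives $K_{\tilde V}=\sigma^*K_V+(4-4)E=\sigma^*K_V$. So the blow-up is crepant, provided $\tilde V$ is normal and the exceptional divisor $E=\tilde V\cap E_W$ is reduced. Both will follow once smoothness is established.

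The exceptional divisor $E$ is the hypersurface $\lambda z^2+\mu s^4=q(x,y)$ of degree $4$ in $\PP(1,1,1,2)$. Completing, I write it as $z^2=g_4(x,y,s)$ with $g_4=(q(x,y)-\mu s^4)/\lambda$. This is the double cover of $\PP^2_{x,y,s}$ branched along the plane quartic $g_4=0$. That quartic is smooth: its partials are $\partial_xq$, $\partial_yq$ and $s^3$. These force $s=0$, and then $x=y=0$ because $q$ has distinct roots. A double plane branched along a smooth quartic is a smooth del Pezzo surface of degree two. Note that $E$ avoids the singular point $[0:0:0:1]$ of $\PP(1,1,1,2)$, since $\lambda\neq0$.

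For smoothness of $\tilde V$ I will use the charts. The three charts for $x,y,s$ are affine spaces; for instance $x=x$, $y=xy'$, $s=xs'$, $z=x^2z'$ is $\CC^4$. The chart for $z$ is $\CC^4/\mu_2$ with weights $(1,1,1)$ on $x',y',s'$. In a smooth chart, $\tilde V$ is cut out by $f_6(1,y',s',z')+x\cdot(\dots)$, and along $E=\{x=0\}$ it is smooth because $E$ is a smooth Cartier divisor on it. More precisely, $\tilde V$ is smooth at every point where the restricted equation $f_6$ is smooth on $E$. On the $\CC^4/\mu_2$ chart, the only orbifold point is the origin, which corresponds to $[0:0:0:1]\notin E$, so $\tilde V$ misses it. Away from $E$, $\tilde V$ is isomorphic to $V$ minus the point, and that is smooth by hypothesis because the singularity is isolated (in our application, by Lemma~\ref{lem:sixsing}).

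Projectivity and algebraicity come from the weighted blow-up being the relative $\operatorname{Proj}$ of the graded ideal algebra $\bigoplus_k I_k$, where $I_k$ is generated by monomials of weighted degree $\ge k$. This makes it a projective morphism defined algebraically. Globally, I would compute the local coordinates at each of the six points so that they match \eqref{eq:weightedinitial}, but that lies outside this lemma. The main obstacle is the bookkeeping that $\tilde V$ really avoids the $\tfrac12(1,1,1)$ point and that the higher-order terms do not destroy smoothness on $E$. The argument that $E$ is a smooth Cartier divisor in the smooth locus handles both.
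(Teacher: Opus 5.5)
Your proposal is correct and is essentially the paper's argument. It uses the same discrepancy count $(1+1+1+2)-1-4=0$, the same description of the exceptional divisor as a double plane branched along the smooth quartic $q(x,y)-\mu s^4=0$, and the same chart check in which $\tilde V$ misses the $\mu_2$-fixed point because $\lambda\ne0$; your relative $\operatorname{Proj}$ of $\bigoplus_k I_k$ is equivalent to the paper's normalized blow-up of $(z)+(x,y,s)^2=I_2$. There are only cosmetic slips: the initial part should be labelled $f_4$ with remainder $f_{\ge5}$, and the residual $\mu_2$ on the $z$-chart changes the sign of all four coordinates, including the exceptional one. Also, smoothness of the germ $\tilde V$ away from $E$ follows from properness once $\tilde V$ is smooth along $E$, so you do not need Lemma~\ref{lem:sixsing}.
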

\begin{proof}
The weighted order of the equation is four. The discrepancy of the strict transform is
\begin{equation}\label{eq:weighteddiscrepancy}
 (1+1+1+2)-1-4=0.
\end{equation}
The exceptional divisor is
\[
 D=\{\lambda z^2+\mu s^4-q(x,y)=0\}\subset\PP(1,1,1,2).
\]
Projection to $[x:y:s]$ identifies $D$ with a double cover of $\PP^2$ branched along $q(x,y)-\mu s^4=0$. A singular point of this plane quartic would have $s=0$ and $q_x=q_y=0$, which is impossible projectively. Thus the branch curve is smooth, and $D$ is a smooth del Pezzo surface with $K_D^2=2$.

We check smoothness in the weighted blow-up charts. In each weight-one chart, divide the equation by the fourth power of the exceptional parameter. On the exceptional divisor the resulting equation is a dehomogenization of \eqref{eq:weightedinitial}, and its gradient is nonzero. The strict transform is therefore smooth there. In the weight-two chart, the index cover has coordinates $(r,x_1,y_1,s_1)$ with
\[
 z=r^2,\qquad x=rx_1,\qquad y=ry_1,\qquad s=rs_1,
\]
and the residual $\mu_2$ acts by changing all four signs. On $r=0$ the equation is $\lambda+\mu s_1^4-q(x_1,y_1)=0$, whose gradient is nonzero by the preceding argument. The only fixed point of $\mu_2$ is the origin, where the divided equation takes the value $\lambda\ne0$. Thus the strict transform and its quotient are smooth on this chart as well. Away from the exceptional divisor the morphism is an isomorphism.

In a smooth algebraic neighborhood, the weighted blow-up is the normalization of the blow-up of
\[
 (z)+(x,y,s)^2.
\]
Indeed, this monomial ideal has the unique compact Newton facet $m_x+m_y+m_s+2m_z=2$. The normalized blow-up has the star subdivision determined by the primitive vector $(1,1,1,2)$. This also proves projectivity. The strict transform of the hypersurface gives the claimed resolution.
\end{proof}

\begin{Pro}\label{prop:Qresolution}
The six singularities of $\Qquart$ admit disjoint projective crepant resolutions as in Lemma~\ref{lem:weightedblowup}. In particular, there is a smooth projective threefold
\[
 \rho:Y\longrightarrow\Qquart
\]
with exceptional divisors $D_1,\ldots,D_6$, each a degree-two del Pezzo surface, and $K_Y\simeq\OO_Y$.
\end{Pro}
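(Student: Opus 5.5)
The plan is to reduce to Lemma~\ref{lem:weightedblowup} by putting each of the six singular points of Lemma~\ref{lem:sixsing} into the normal form \eqref{eq:weightedinitial}, and then to glue the local blow-ups.

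\textbf{Local normal form.} Take a finite point $v_0\in\Sigma$; the point at infinity is handled the same way in the chart \eqref{eq:bundletransition}. Work in the affine chart $X_3=1$. Here the singular point has $X_0=X_1=0$, and $X_2$ equals the colliding root: $X_2=0$ for $v_0=0$, $X_2=-\beta(v_0)=0$ for $v_0=\pm1$, and $X_2=-\alpha(v_0)=0$ for $v_0=\pm i$. Set $s=v-v_0$ and let $z$ be $X_2$ shifted by the average of the two colliding roots. The product of the two colliding linear factors then becomes $z^2-\tfrac14\delta(s)^2$, where $\delta$ is the difference of the two roots. For $v_0=\pm1,\pm i$ we have $\delta=\beta$ or $\alpha$, which vanish to order exactly $2$. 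For $v_0=0$ we have $\delta=\alpha-\beta=4v^2$, again of order exactly $2$. So $\delta(s)^2=\kappa s^4+O(s^5)$ with $\kappa\neq0$.

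The two remaining factors are units $\epsilon(s,z)$ near the point, since there is no collision there and $X_3=1$. Dividing by $\epsilon$, or equivalently absorbing it, the equation becomes
\[
 \epsilon(0,0)\bigl(z^2-\tfrac{\kappa}{4}s^4\bigr)-q(X_0,X_1)+(\text{terms of weighted order}\geq5).
\]
The correction terms come from $(\epsilon-\epsilon(0,0))(z^2-\dots)$ and from higher powers of $s$ in $\delta^2$, and all of them have weighted order at least $5$ for weights $(1,1,1,2)$ on $(X_0,X_1,s,z)$. So the weighted initial form is exactly \eqref{eq:weightedinitial}, with $\lambda\mu\neq0$, and $q$ has four distinct zeros by \eqref{eq:quarticcoeff}.

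\textbf{Applying the lemma.} Lemma~\ref{lem:weightedblowup} now applies at each point. The weighted blow-up of $M$ at each of the six points, realized algebraically as the normalized blow-up of $(z)+(x,y,s)^2$, is a projective morphism. The six points lie in distinct fibers of $p$, so these blow-ups are disjoint and compose to a projective birational morphism $\widetilde M\to M$. The strict transform $Y$ of $\Qquart$ is projective, it is smooth near each exceptional divisor by the lemma, and it is smooth elsewhere by Lemma~\ref{lem:sixsing}. The exceptional divisors $D_j$ are the degree-two del Pezzo surfaces produced by the lemma.

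\textbf{Crepancy.} The lemma's discrepancy computation \eqref{eq:weighteddiscrepancy} gives $K_Y=\rho^*K_{\Qquart}+\sum 0\cdot D_j$. Since $\omega_{\Qquart}\simeq\OO_{\Qquart}$ by Proposition~\ref{prop:anticanonical}, it follows that $K_Y\simeq\OO_Y$.

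The main obstacle is making the normal-form step rigorous, in particular confirming that the unit $\epsilon$ and the higher-order terms do not change the weighted initial form. This needs a careful weighted-order check of each term. It also needs the observation that the lemma depends only on the weighted initial form, which holds because its chart computations divide by the fourth power of the exceptional parameter and then restrict to the exceptional divisor.
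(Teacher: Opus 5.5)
Your proposal is correct and follows the paper's proof: you center $z$ at the midpoint of the colliding roots to reach the initial form $\lambda z^2+\mu s^4-q(x,y)$, and you treat the point over infinity as identical to $v=0$. You then apply Lemma~\ref{lem:weightedblowup} at the six points, glue the point-supported ideals, and get $K_Y\simeq\OO_Y$ from crepancy and Proposition~\ref{prop:anticanonical}, exactly as the paper does. One slip: at $v_0=0$ the singular point has $X_2=-1$, since the collision is $-\alpha(0)=-\beta(0)=-1$, not $X_2=0$; your later use of $\delta=\alpha-\beta$ and of $X_2$ as a unit factor already assumes this correct location.
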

\begin{proof}
At $v=0$, work on $X_3=1$, put $x=X_0$, $y=X_1$, and set $s=v$ and $z=X_2+1+s^4$. The exact local equation is
\begin{equation}\label{eq:localzero}
 (z-1-s^4)(z^2-4s^4)-q(x,y)=0.
\end{equation}
Its weighted initial form is $-z^2+4s^4-q(x,y)$.

At $v_0=\pm1$, put $s=v-v_0$ and $z=X_2+\beta(v)/2$. The equation is
\[
 \bigl(z+\alpha(v)-\beta(v)/2\bigr)\bigl(z^2-\beta(v)^2/4\bigr)-q(x,y)=0.
\]
Here $\alpha(v_0)=4$ and $\beta(v)=4s^2+O(s^3)$, so the initial form is
\begin{equation}\label{eq:localone}
 4z^2-16s^4-q(x,y).
\end{equation}
At $v_0=\pm i$, interchange $\alpha$ and $\beta$. Since the square of the leading coefficient of $\alpha(v)$ is again $16$, the initial form is also \eqref{eq:localone}. The point over infinity is identical to the point over zero in the chart \eqref{eq:bundletransition}.

Lemma~\ref{lem:weightedblowup} applies to all six germs. Their coordinate ideals have support at distinct points and extend by the unit ideal to coherent ideals on the smooth ambient fourfold. The normalized ambient blow-ups, followed by strict transform, are therefore global projective morphisms. The centers are disjoint, and no additional singularity remains. Crepantness and Proposition~\ref{prop:anticanonical} give $K_Y\simeq\OO_Y$.
\end{proof}

\Needspace{6\baselineskip}
\begin{Pro}\label{prop:QCY}
The threefold $Y$ is Calabi--Yau.
\end{Pro}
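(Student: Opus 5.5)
Proposition~\ref{prop:Qresolution} already gives that $Y$ is smooth and projective with $K_Y\simeq\OO_Y$. It remains to show $H^1(Y,\OO_Y)=H^2(Y,\OO_Y)=0$. By Serre duality and $K_Y\simeq\OO_Y$, we have $H^2(Y,\OO_Y)\simeq H^1(Y,\OO_Y)^\vee$, so it suffices to prove $H^1(Y,\OO_Y)=0$.

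We would use the fibration $f=p\circ\rho:Y\to\PP^1_v$. Its fibers are connected: a general fiber is a smooth quartic K3 surface by Lemma~\ref{lem:sixsing}, and $f_*\OO_Y=\OO_{\PP^1}$ because $Y$ is normal and the general fiber is connected. The Leray spectral sequence gives the exact sequence
\[
 0\to H^1(\PP^1,\OO_{\PP^1})\to H^1(Y,\OO_Y)\to H^0(\PP^1,R^1f_*\OO_Y).
\]
The first term vanishes. So it suffices to show $R^1f_*\OO_Y=0$. This sheaf is torsion-free on a curve, being a higher direct image under a flat map from a smooth variety (Koll\'ar's torsion-freeness theorem applies since $K_Y$ is trivial). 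Its generic rank is $h^1(\OO)$ of a K3 surface, which is $0$. Hence $R^1f_*\OO_Y=0$, and so $H^1(Y,\OO_Y)=0$.

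An alternative route avoids Koll\'ar's theorem. The singularities of $\Qquart$ are isolated hypersurface singularities with a crepant resolution, hence rational. Therefore $\rho_*\OO_Y=\OO_\Qquart$ and $R^i\rho_*\OO_Y=0$ for $i>0$, which gives $H^i(Y,\OO_Y)=H^i(\Qquart,\OO_\Qquart)$. One could then compute $H^1(\Qquart,\OO)$ either from the exact sequence
\[
 0\to\OO_M(-\Qquart)\to\OO_M\to\OO_\Qquart\to 0
\]
or by the same Leray argument on $\Qquart\to\PP^1$, whose fibers are all quartic surfaces with $h^1(\OO)=0$. In the first approach, $H^i(M,\OO_M)=0$ for $i=1,2$ because $M$ is rational. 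Also $H^2(M,K_M)\simeq H^2(M,\OO_M)^\vee=0$ by Serre duality, since $\OO_M(-\Qquart)=K_M$. This gives $H^1(\Qquart,\OO)=0$ directly. We consider this the cleanest route.

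The main obstacle is justifying that the singularities are rational, i.e.\ that $R^i\rho_*\OO_Y=0$. This follows because they are canonical singularities: Lemma~\ref{lem:weightedblowup} shows the discrepancy is $0$ and the exceptional divisor is irreducible. Canonical Gorenstein singularities are rational by Elkik's theorem. Alternatively, for the weighted blow-up one can directly check $H^i(D,\OO_D(-kD))=0$ on the del Pezzo exceptional divisor, using that $-D|_D$ is ample.
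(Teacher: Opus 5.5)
Your proposal is correct. Your \emph{alternative route}, which you call the cleanest, is essentially the paper's proof. The paper argues that a smooth crepant resolution makes the singularities of $\Qquart$ canonical, hence rational (it cites Kollár--Mori 5.22 where you cite Elkik), so $H^j(Y,\OO_Y)=H^j(\Qquart,\OO_\Qquart)$. It then reads off both vanishings from $0\to\OO_M(K_M)\to\OO_M\to\OO_\Qquart\to0$, using $H^j(M,\OO_M)=0$ for $j>0$ and $H^j(M,K_M)=0$ for $j<4$. You get $H^2$ from Serre duality on $Y$ instead of from $H^3(M,K_M)=0$; this difference is cosmetic.

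Your primary route is genuinely different, and it is also valid. It works entirely on the smooth threefold $Y$ and never touches the singularities of $\Qquart$ or the sheaves $R^j\rho_*\OO_Y$. Two ingredients replace rationality of canonical singularities: Kollár's torsion-freeness of $R^1f_*\omega_Y=R^1f_*\OO_Y$, and the fact that the general fiber of $f$ is a K3 surface. This argument applies to any smooth projective $Y$ with trivial canonical bundle fibered over $\PP^1$ with general fibers satisfying $h^1(\OO)=0$. The cost is a deeper, Hodge-theoretic theorem. The paper's argument needs only the ambient projective bundle and a standard fact about canonical singularities, and it also yields the vanishing on $\Qquart$ itself.

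One small correction to your wording: flatness of $f$ does not give torsion-freeness of $R^1f_*\OO_Y$. That comes from Kollár's theorem applied to $\omega_Y\simeq\OO_Y$. Flatness is used only in the base-change step showing that the generic rank is zero.
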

\begin{proof}
The projective bundle $M$ satisfies $H^j(M,\OO_M)=0$ for every $j>0$. Since $\dim M=4$, Serre duality gives $H^j(M,K_M)=0$ for $j<4$. The exact sequence
\[
 0\longrightarrow\OO_M(K_M)\longrightarrow\OO_M
 \longrightarrow\OO_{\Qquart}\longrightarrow0
\]
therefore yields $H^1(\Qquart,\OO_{\Qquart})=H^2(\Qquart,\OO_{\Qquart})=0$.

The normal Gorenstein variety $\Qquart$ has a smooth crepant resolution, so its singularities are canonical. In characteristic zero they are rational; see \cite[5.22]{KM}. Thus $R^j\rho_*\OO_Y=0$ for $j>0$, and the two vanishings hold on $Y$. The canonical bundle is trivial by Proposition~\ref{prop:Qresolution}.
\end{proof}

\section{The elliptic pencil and its Mordell--Weil group}\label{sec:MW}

\subsection{The elliptic pencil}
The planes through the line $X_1=X_3=0$ cut out that fixed line and a residual
plane cubic. The quotient $X_1/X_3$ labels the planes. We first extend this
quotient across the fixed line, and then compute a pointed Weierstrass
equation for the residual cubic.
On \eqref{eq:quartic}, put $t=X_1/X_3$. Write
\[
 G=X_2(X_2+\alpha X_3)(X_2+\beta X_3),\qquad
 H_0=X_0(X_0+25X_1)(X_0+9X_1).
\]
The equation is $X_3G=X_1H_0$. Along $X_1=X_3=0$, the pair $[G:H_0]$ equals $[X_2^3:X_0^3]$ and is defined. Thus $[X_1:X_3]$ and $[G:H_0]$ glue to a morphism. Under \eqref{eq:bundletransition}, one has $t'=w^3t$, so the target is $\FF_3$.

After removing the fixed line from the plane section $X_1=tX_3$, the residual cubic is
\begin{equation}\label{eq:cubic}
 X_2(X_2+\alpha X_3)(X_2+\beta X_3)=tX_0(X_0+25tX_3)(X_0+9tX_3).
\end{equation}
It contains the point $X_0=X_2=0$. The following calculation identifies its generic fiber over $\CC(v,t)$.

The next lemma is the standard quartic-to-Weierstrass transformation;
see \cite{ConnellAddendum}.
We give the formulas to specify
the normalization and the image of the chosen zero.

\begin{Lem}\label{lem:quarticjacobian}
Let a genus-one double cover with square-free quartic equation over a characteristic-zero field be given by
\[
 \eta^2=a_4s^4+a_3s^3+a_2s^2+a_1s+a_0.
\]
Suppose it has a rational point with finite $s$-coordinate and nonzero $\eta$-coordinate. Set
\[
 I=12a_4a_0-3a_3a_1+a_2^2,\qquad
 J=72a_4a_2a_0+9a_3a_2a_1-27a_4a_1^2-27a_3^2a_0-2a_2^3.
\]
With the indicated point as zero, its elliptic curve is isomorphic over the same field to
\begin{equation}\label{eq:binaryjacobian}
 V^2=U^3-27IU-27J.
\end{equation}
\end{Lem}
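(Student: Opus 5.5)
Our plan is to reduce to the case of a rational root and then use the classical map to a cubic. First we translate $s\mapsto s+s_0$, where $s_0$ is the finite $s$-coordinate of the given point $(s_0,\eta_0)$ with $\eta_0\ne0$. The invariants $I$ and $J$ are invariants of the binary quartic under $\mathrm{SL}_2$, so they do not change. After the translation $a_0=\eta_0^2$ is a nonzero square.

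Next we invert, $s=1/\sigma$ and $\eta=\eta'/\sigma^2$. This moves the point to infinity, where the quartic $\eta'^2=a_0\sigma^4+a_1\sigma^3+\dots+a_4$ now has leading coefficient $\eta_0^2$, a square. The inversion $s\mapsto 1/s$ has determinant $-1$. It reverses the coefficients, so $I$ is unchanged and $J$ is also unchanged, because $J$ has even degree $6$ and weight compatible with $\mathrm{GL}_2$ scaling by $\det^{6}$. We will verify this symmetry directly from the displayed formulas: they are symmetric under $a_i\leftrightarrow a_{4-i}$.

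With a square leading coefficient $e^2$ the two points at infinity are rational, and we use the standard substitution of \cite{ConnellAddendum}. Set $\eta'=e\sigma^2+\tfrac{a_1}{2e}\sigma+w$ and solve the result, which becomes linear in $\sigma$ after cancellation. This gives a birational map to a cubic $Y^2+\dots=X^3+\dots$ over the same field, and it sends the chosen point at infinity to the point at infinity of the cubic. Completing the square and the cube, and rescaling by suitable powers of $2$ and $3$, gives the short form $V^2=U^3+AU+B$. Then $A$ and $B$ are rational functions of the $a_i$ of weights $2$ and $3$.

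The main obstacle is identifying $A=-27I$ and $B=-27J$ exactly, with no twist, rather than only up to scaling. We avoid a brute-force expansion with an invariance argument. The map is equivariant for the unipotent translations fixing infinity and for scaling, so $A$ and $B$ are polynomial invariants of the correct degrees. The ring of invariants in degrees $2$ and $3$ is spanned by $I$ and $J$. It therefore suffices to check the constants on one example, such as $\eta^2=s^4+a_2s^2+a_0$, or a quartic with known $2$-torsion. Finally, the isomorphism class over the field is pinned down because the $j$-invariant matches and the curves agree under a rational change of coordinates of the form $U\mapsto\lambda^2U$, $V\mapsto\lambda^3V$, with $\lambda$ rational. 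Fixing $\lambda=1$ on the normalized example then settles it. Square-freeness of the quartic ensures the discriminant $4(4I^3-J^2)/27\neq0$, so the cubic is nonsingular.
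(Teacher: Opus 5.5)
\textbf{Gap: identifying the coefficients with $-27I$ and $-27J$.} Your overall route can be made to work: translate, invert so the leading coefficient becomes $e^2=a_0$, pass to a cubic, and complete the cube. The inversion is harmless, because $I$ and $J$ are symmetric under $a_k\leftrightarrow a_{4-k}$. The problem is the invariance step.

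\emph{Why the invariance step fails.}
\begin{enumerate}
\item Your substitution divides by $e=\sqrt{a_0}$. So a priori $A,B$ lie in $k[e^{\pm1},a_1,\dots,a_4]$, not in the polynomial ring.
\item The only symmetries you use fix the point at infinity, namely translations and scalings of $\sigma$. These give seminvariants, not $\mathrm{SL}_2$-invariants.
\item Once powers of $a_0$ may appear in denominators, the candidate spaces are larger than you claim. Put $H_0=8a_0a_2-3a_1^2$, the leading coefficient of the Hessian up to a constant. Then $H_0^2/a_0^2$ is translation-invariant of degree two and torus weight zero, but it is not proportional to $I$. In degree three, $IH_0/a_0$ and $H_0^3/a_0^3$ are further candidates besides $J$.
\end{enumerate}
Hence one example cannot determine $A$ and $B$. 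To use the classical fact that weight-zero seminvariants are invariants, you would first need $A,B$ to be polynomials in $a_0,\dots,a_4$. That is exactly the computation you were trying to avoid.

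Your closing appeal to the $j$-invariant does not repair this. Equal $j$ does not give an isomorphism over a non-closed field. Also, a $\lambda$ fixed on one example says nothing about $\lambda$ for other coefficients.

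\textbf{How to fix it.} Compute directly; it is short. First, your substitution gives an equation that is quadratic, not linear, in $\sigma$:
\[
 \Bigl(2ew+\frac{a_1^2}{4e^2}-a_2\Bigr)\sigma^2+\Bigl(\frac{a_1}{e}w-a_3\Bigr)\sigma+w^2-a_4=0.
\]
Let $Y^2$ be its discriminant; then $\sigma$ is recovered rationally from $w$ and $Y$. Putting $w=-X/(2e)$ and $Z=eY$ gives
\[
 Z^2=X^3+a_2X^2+(a_1a_3-4a_0a_4)X+a_0a_3^2+a_1^2a_4-4a_0a_2a_4,
\]
whose coefficients are polynomial in $a_0=e^2$. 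Choose the sign of $e$ so that the double pole of $w$ lies at the chosen point.

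This is exactly the intermediate cubic in the paper's proof. The paper skips the inversion and applies $u=(2q(\eta+q)+a_1s)/s^2$ with $a_0=q^2$ directly. Then $U=9X+3a_2$, $V=27Z$ gives $-27I$ and $-27J$ by direct expansion, with no invariant theory needed.
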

\begin{proof}
Translate $s$ so that the point is $(0,q)$, with $q\ne0$ and $a_0=q^2$; the displayed invariants are unchanged by this translation. Put
\[
 u=\frac{2q(\eta+q)+a_1s}{s^2},\qquad
 z=\frac{(u^2-4q^2a_4)s-(a_1u+2q^2a_3)}{2q}.
\]
Substitution gives
\[
 z^2=u^3+a_2u^2+(a_3a_1-4a_4q^2)u+
       a_4a_1^2+q^2a_3^2-4a_4a_2q^2.
\]
The formula defining $z$ gives $s$ as a rational function of $u,z$; substitution in the definition of $u$ then recovers $\eta$. Hence the map is birational. The change
$U=9u+3a_2$, $V=27z$ gives \eqref{eq:binaryjacobian}. The chosen point maps to infinity. Birational maps between smooth projective curves extend to isomorphisms, proving the assertion.
\end{proof}

\begin{Pro}\label{prop:genericidentification}
With $X_0=X_2=0$ as zero, the generic cubic \eqref{eq:cubic} is isomorphic over $\CC(v,t)$ to \eqref{eq:mainW}.
\end{Pro}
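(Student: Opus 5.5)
The plan is to reduce the plane cubic \eqref{eq:cubic} to a genus-one quartic double cover and then apply Lemma~\ref{lem:quarticjacobian}. Work on the affine chart $X_3=1$ and write $x=X_0$, $z=X_2$. The curve becomes
\[
 z(z+\alpha)(z+\beta)=t\,x(x+25t)(x+9t).
\]
It passes through the origin $x=z=0$, which is the chosen zero $X_0=X_2=0$.

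\textbf{Step 1: reduce to a double cover.} Project from the origin using the lines $x=mz$. Substituting and dividing by $z$ leaves the quadratic
\[
 (z+\alpha)(z+\beta)=tm\,(mz+25t)(mz+9t)
\]
in $z$. Its residual intersection points are rational over $\CC(v,t)(m)$ exactly when the discriminant $\Delta(m)$ in $z$ is a square. This discriminant is a quartic in $m$, giving the model
\[
 \eta^2=\Delta(m)=a_4m^4+a_3m^3+a_2m^2+a_1m+a_0 .
\]
The data are explicit polynomials in $\alpha,\beta,t$. Expanding, $a_0=(\alpha-\beta)^2$, which is nonzero. The coefficients $a_1,a_2$ have terms linear in $t$, and $a_3,a_4$ involve $t^3$ and $t^2\cdot(25-9)^2$.

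\textbf{Step 2: choose the base point.} Take $m=0$, i.e.\ the line $x=0$. It gives a rational point $(0,\pm(\alpha-\beta))$ with $\eta\neq0$, so Lemma~\ref{lem:quarticjacobian} applies directly. I must check that the original zero $X_0=X_2=0$ corresponds to this point, or at least to a rational point. Otherwise the resulting Weierstrass curve is still the Jacobian, and isomorphism over $\CC(v,t)$ only needs some rational point. Any two choices differ by translation, so the Weierstrass model is the same.

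\textbf{Step 3: compute and match the invariants.} Compute $I$ and $J$ and substitute $\alpha=(v^2+1)^2$, $\beta=(v^2-1)^2$. Here $25=(u^2+1)^2$ and $9=(u^2-1)^2$ at $u=2$. Because the curve has the symmetric shape $f(z)=t\,g(x)$ with cubics $f,g$, I expect $I$ to factor as a product of the quadratic invariants of $f$ and $g$. These are $a_2^2-3a_4$ for each cubic, i.e.\ $A(v)$ and $A(2)$ up to constants by \eqref{eq:modularinvariants}. This factorisation should produce the term $-27aA(v)t^4$ after a rescaling $U\mapsto\lambda^2U$, $V\mapsto\lambda^3V$ with $\lambda\in\CC(v,t)^\times$, possibly a power of $t$ and a constant. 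Similarly, $J$ should split into the $c_6$-type product $cC(v)t^6$ plus discriminant contributions $b^4t^{10}$ and $B(v)^4t^2$. The identity \eqref{eq:ABCidentity} is what I expect to be needed to recognise the combination. I will then record $\lambda$ explicitly.

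\textbf{Main obstacle.} The hard part is Step~3: the bookkeeping of $I$ and $J$ and finding the right $\lambda$. My first move is to compute with $\alpha,\beta$ as symbols and the numbers $25,9$ kept as symbols $\gamma,\delta$. Then I would express $I$ and $J$ through the elementary symmetric data of the roots $\{0,-\alpha,-\beta\}$ and $\{0,-\gamma t,-\delta t\}$, and only afterwards specialise. This makes the factorisation into $A(v)A(2)$ and the $C$, $B^4$ terms transparent, reducing the proof to checking a polynomial identity.
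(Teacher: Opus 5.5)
Your plan is the paper's proof in the reciprocal slope coordinate. With $m=t/k$, one has $k^4\Delta(t/k)$ equal to the quartic in \eqref{eq:projectionquartic}, so your $I,J$ are the paper's divided by $t^4$ and $t^6$. Basing Lemma~\ref{lem:quarticjacobian} at $m=0$ instead of at the image of $X_0=X_2=0$ is harmless. The resulting Weierstrass equation depends only on $I,J$, and translation by the difference of two rational points restores the prescribed zero. Note, though, that the zero is not at $m=0$. It lies at the tangent slope $m_0=\alpha\beta/(225t^3)$, with $\eta=\alpha+\beta-34t^2m_0^2\neq0$, so you could use it directly, as the paper does.

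Two things still need to be supplied. First, the computation you defer. Your description of the coefficients has the wrong powers of $t$; expanding gives
\[
 a_4=256t^4,\quad a_3=4\alpha\beta t,\quad a_2=-68(\alpha+\beta)t^2,\quad
 a_1=900t^3,\quad a_0=(\alpha-\beta)^2=16v^4.
\]
Hence $I=16aA(v)t^4$ and $J=-128\bigl(54b^4t^{10}+cC(v)t^6+54B(v)^4t^2\bigr)$. The substitution $U=4x$, $V=8y$ then yields \eqref{eq:mainW} exactly, so no power of $t$ is needed in your normalization.

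Second, Lemma~\ref{lem:quarticjacobian} assumes the quartic is square-free. To upgrade the birational map from the plane cubic to an isomorphism, you also need the cubic to be smooth. Both follow from $4I^3-J^2\neq0$, which holds because its leading term in $t$ is nonzero. The double cover then has genus one, and a plane cubic birational to a genus-one curve is smooth. You should add this step.
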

\begin{proof}
Put $\xi=X_0/(tX_3)$ and $\zeta=X_2/X_3$. Equation~\eqref{eq:cubic} becomes
\[
 \zeta(\zeta+\alpha)(\zeta+\beta)=t^4\xi(\xi+25)(\xi+9).
\]
Projection from $(0,0)$, with $\zeta=k\xi$, gives the double-cover equation
\begin{equation}\label{eq:projectionquartic}
 \eta^2=\bigl((\alpha+\beta)k^2-34t^4\bigr)^2
       -4(k^3-t^4)(\alpha\beta k-225t^4).
\end{equation}
The original point maps to
\[
 k_0=\frac{225t^4}{\alpha\beta},\qquad
 \eta_0=(\alpha+\beta)k_0^2-34t^4\ne0.
\]
The coefficients of the binary quartic in \eqref{eq:projectionquartic}, in decreasing order, are
\[
 16v^4,\quad900t^4,\quad-136(v^4+1)t^4,\quad
 4(v^4-1)^2t^4,\quad256t^8.
\]
Their invariants are
\begin{equation}\label{eq:projectionIJ}
 I=16aA(v)t^8,\qquad
 J=-128t^{12}\left(54b^4t^4+cC(v)+54B(v)^4/t^4\right).
\end{equation}
These identities follow by substituting the five coefficients into the formulas in Lemma~\ref{lem:quarticjacobian}. The quartic has nonzero discriminant over $\CC(v,t)$: $4I^3-J^2$ has a nonzero leading term in $t$. Thus its smooth projective model has genus one, and so does the birational plane cubic, which is consequently smooth. Apply Lemma~\ref{lem:quarticjacobian} and set $U=4t^2x$, $V=8t^3y$. Equation~\eqref{eq:binaryjacobian} becomes precisely \eqref{eq:mainW}. Every transformation is over the original function field and preserves the chosen zero.
\end{proof}

\begin{Rem}\label{rem:nonflat}
Write $\Sigma=\{v_1,\ldots,v_6\}$ for the six parameters at which
the quartic K3 fibration has singular fibers. The morphism $Y\to\FF_3$
contracts the exceptional del Pezzo surface over $v_i$ to $(v_i,0)$.
It therefore has two-dimensional fibers over these six points and is
not flat. In Section~\ref{sec:W} we construct
a birational Weierstrass model over $\BB$; its resolution will carry a flat
elliptic fibration with a regular zero section.
\end{Rem}

\subsection{The Picard number of the generic quartic}
We use Inose's comparison of the Picard numbers of $F^{(4)}$ and its
associated Kummer surface; see \cite[Corollary~1.2]{Inose} and
\cite[Remark~2.4]{KK}.

\begin{Lem}\label{lem:genericPicard}
The geometric generic quartic fiber $S$ of $\Qquart\to\PP^1_v$ has Picard number $18$.
\end{Lem}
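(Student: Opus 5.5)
The plan is to identify the geometric generic quartic fiber $S$ with the surface $F^{(4)}_{E_2,E_v}$ of Kumar and Kuwata, compute the Picard number of the associated Kummer surface, and transfer it along the Inose comparison. By Section~\ref{sec:origin}, the quartic \eqref{eq:quartic} over $\overline{\CC(v)}$ is exactly the specialization $u=2$ of \eqref{eq:originquartic}. It is therefore the surface $F^{(4)}$ attached to the pair $(E_2,E_v)$. By \cite[Corollary~1.2]{Inose} and \cite[Remark~2.4]{KK}, $\rho(F^{(n)})$ equals the Picard number of $\operatorname{Km}(E_2\times E_v)$ for $n=1,2,4$, since these surfaces are related by cyclic base changes and Shioda--Inose structures that preserve the transcendental lattice up to isogeny.

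Next I will compute $\rho(\operatorname{Km}(E_2\times E_v))=\rho(E_2\times E_v)+16$. For an abelian surface that is a product of two elliptic curves,
\[
\rho(E_2\times E_v)=2+\rk\Hom(E_2,E_v).
\]
Over the algebraic closure of $\CC(v)$, the curve $E_v$ has nonconstant $j$-invariant \eqref{eq:jEz}, while $E_2$ is defined over $\CC$. A nonzero homomorphism $E_2\to E_v$ would be an isogeny. Isogenous curves have algebraically dependent $j$-invariants, and over an algebraically closed base field a curve isogenous to a constant curve is itself constant, which contradicts the nonconstancy of $j(E_v)$. Hence $\Hom(E_2,E_v)=0$, so $\rho(E_2\times E_v)=2$ and the Kummer surface has Picard number $18$. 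This gives $\rho(S)=18$.

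The main obstacle is stating the transfer step rigorously: we must make sure that the Picard number of the \emph{geometric generic} fiber is the one controlled by Inose's result. That result is stated for surfaces over an algebraically closed field of characteristic zero, and $\overline{\CC(v)}$ is such a field, abstractly isomorphic to $\CC$. It therefore applies verbatim.

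As a cross-check, I will also bound $\rho(S)$ directly.
\begin{itemize}
\item The sixteen lines together with the hyperplane class should span a lattice of rank $18$, which gives $\rho(S)\ge18$. The rank of the intersection matrix of the lines can be computed directly.
\item For the upper bound, the transcendental lattice $T(E_2\times E_v)$ has rank $4$, and the rational Hodge isometry transported through the Shioda--Inose structure gives $\rk T(S)=4$, so $\rho(S)\le 22-4=18$.
\end{itemize}
If the citation is to be avoided entirely, the alternative for the upper bound is a specialization argument. One shows that the family has nonconstant period, so that $\rho(S)\le 19$. One then excludes $19$, since rank $19$ would force $E_2$ and $E_v$ to be isogenous. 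I expect this alternative to be more delicate than invoking \cite{Inose}.
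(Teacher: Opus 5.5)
Your main argument is the paper's own: identify $S$ with $F^{(4)}_{E_2,E_v}$, use Inose to get $\rho(S)=\rho(\operatorname{Km}(E_2\times E_v))=2+\rk\Hom(E_2,E_v)+16$, and rule out isogenies because a curve isogenous to the constant curve $E_2$ would have constant $j$-invariant (the paper makes this precise through the monicity of $\Phi_N(T,j(E_2))$). Your first cross-check bullet is false, however, since these classes span only rank $10$: the sixteen lines form a $4\times4$ grid with $L_{ij}\cdot L_{il}=L_{ij}\cdot L_{kj}=1$ for $j\ne l$, $i\ne k$, all other distinct pairs are disjoint, the hyperplane class equals $\sum_jL_{ij}$, and the Gram matrix has eigenvalues $4$, $0$ (six times) and $-4$ (nine times); an independent lower bound $\rho(S)\ge2+2\cdot2+12=18$ comes instead from Shioda--Tate with the two $\IV$ fibers and the twelve independent sections of Appendix~\ref{app:sections}.
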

\begin{proof}
The elliptic curves $E_2$ and $E_v$ are not isogenous over
$\overline{\CC(v)}$. Otherwise an isogeny of minimal degree would have
cyclic kernel and would imply
\[
 \Phi_N\bigl(j(E_v),j(E_2)\bigr)=0
\]
for a classical modular polynomial $\Phi_N$. Since
$\Phi_N(T,j(E_2))$ is monic in $T$, this contradicts the nonconstancy
of $j(E_v)$ in \eqref{eq:jEz}.

By Section~\ref{sec:origin}, $S$ is the surface $F^{(4)}_{E_2,E_v}$.
Inose's theorem gives
\[
 \rho(S)=\rho\bigl(\operatorname{Km}(E_2\times E_v)\bigr).
\]
The product has N\'eron--Severi rank
$2+\rk\Hom(E_2,E_v)=2$. Simultaneous negation acts trivially on its
second cohomology, and the Kummer resolution adds sixteen independent
exceptional classes. Hence $\rho(S)=2+16=18$.
\end{proof}

For $t\ne0$, the substitution $x=t^2x_0$, $y=t^3y_0$ in \eqref{eq:mainW} gives
\begin{equation}\label{eq:nonminimal}
 y_0^2=x_0^3-27aA(v)x_0+
 54\left(54b^4t^4+cC(v)+\frac{54B(v)^4}{t^4}\right).
\end{equation}

\Needspace{6\baselineskip}
\begin{Lem}\label{lem:K3fibers}
Over $\overline{\CC(v)}$, the elliptic fibration on $S$ has two split fibers of type $\IV$, at $t=0,\infty$, and sixteen fibers of type $\Ione$.
\end{Lem}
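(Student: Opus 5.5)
Over $\overline{\CC(v)}$ I will treat $v$ as a constant and read off the singular fibres of \eqref{eq:mainW} with Tate's algorithm. Write $c_4=-27\cdot 48\,aA(v)t^4$ up to normalisation. Write $c_6$ as a polynomial in $t$ proportional to $b^4t^{10}+\tfrac{cC(v)}{54}t^6+B(v)^4t^2$. The discriminant is then proportional to $4(27aA t^4)^3-\bigl(2916(b^4t^{10}+B^4t^2)+54cCt^6\bigr)^2$. The global check is the Euler-number count $2\cdot4+16=24$, which I will confirm through the degree of $\Delta$ after minimalising.

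At $t=0$ I will use the minimal model \eqref{eq:mainW}. There $\ord_t(a_4)=4$ and $\ord_t(a_6)=2$, since $B(v)\ne0$ generically. Hence $\ord_t\Delta=4$, coming from the $a_6^2$ term, while $\ord_t(a_4)\ge2$, and the fibre has type $\IV$. It is split because the leading coefficient of $a_6/t^2$ is $2916B(v)^4$, a square in $\overline{\CC(v)}$. The fibre at $t=\infty$ is handled by the substitution $t\mapsto1/t$, which respects the $\FF_3$ weights. The equation is symmetric under $t\to1/t$ with $b\leftrightarrow B(v)$ after rescaling by $t^{12}$, so the same argument gives a split $\IV$ with leading coefficient $2916b^4$, again a square.

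For the remaining fibres I will use the twisted form \eqref{eq:nonminimal}. Put $s=t^4$. The discriminant away from $0,\infty$ is a polynomial $P(s)$ of degree $4$ after clearing $s^{-2}$: it is proportional to $4(27aA)^3-54^2\bigl(54b^4s+cC+54B^4/s\bigr)^2$. By \eqref{eq:ABCidentity}, $54b^4s+cC+54B^4/s=\pm\sqrt{a^3A^3}/\dots$, so this factors as a product of two quadratics in $s$. This gives $16$ values of $t$, provided the four roots in $s$ are distinct and nonzero. I will check that $c_4=-27aA(v)\neq0$ at these points, so each of them is multiplicative.

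The main obstacle is showing that each of these $16$ roots is simple, so that the fibres are $\Ione$ rather than $\mathrm I_n$ with $n\ge2$. I will compute the discriminant of each quadratic factor $54b^4s^2+(cC\mp k)s+54B^4$, where $k^2=4\cdot 27^3a^3A^3/54^2$ up to constants. Each of these discriminants is $(cC\mp k)^2-4\cdot54^2b^4B^4$. Using $a^3-c^2=108b^4$ and $A^3-C^2=108B^4$, I expect these to be nonzero for generic $v$, which suffices over the geometric generic point. I also need that the two factors share no root, which holds because $k\ne0$. A sanity check is $\rho=2+2\cdot2=6$ from the trivial lattice plus rank $12$, which gives $18$ and agrees with Lemma~\ref{lem:genericPicard}.
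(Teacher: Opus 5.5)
Your outline is the paper's. You use Tate's algorithm at $t=0,\infty$, then treat the residual discriminant as a quartic in $s=t^4$ with nonzero leading and constant coefficients, show its roots are simple, and use $f\neq0$ to get $\Ione$.

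The gap is exactly the step you call the main obstacle. You only say you expect the discriminants $(cC\mp k)^2-4\cdot54^2b^4B^4$ of the two quadratic factors to be nonzero. That nonvanishing is the whole content of the claim that no $\mathrm I_2$ fibre occurs, so it cannot be left as an expectation.

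Moreover, your set-up computes the wrong quantity. The discriminant of $y^2=x^3+fx+g$ involves $4f^3+27g^2$, and you dropped the $27$ in front of $g^2$. In \eqref{eq:nonminimal} the residual discriminant is a constant times $w^2-a^3A(v)^3$, with $w=54b^4s+cC(v)+54B(v)^4/s$. So the correct $k$ satisfies $k^2=a^3A^3$, not $27a^3A^3$. With your constant the identity below does not close up.

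Also, splitting into two quadratics needs only a square root of $a^3A^3$ in $\overline{\CC(v)}$. The identity \eqref{eq:ABCidentity} is not needed there; it is needed in the next step.

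To close the gap, note that $4\cdot54^2b^4B^4=(108b^2B^2)^2$. So one of your quadratic discriminants vanishes exactly when $k=\pm(cC+108\varepsilon b^2B^2)$ for some $\varepsilon=\pm1$. Substituting $a^3=c^2+108b^4$ and $A^3=C^2+108B^4$ gives
\[
 \bigl(cC+108\varepsilon b^2B^2\bigr)^2-a^3A^3=-108\bigl(b^2C-\varepsilon cB^2\bigr)^2 .
\]
Hence the product of your two discriminants is $108^2\bigl(b^2C-cB^2\bigr)^2\bigl(b^2C+cB^2\bigr)^2=108^2J(v)^2$. This is nonzero in $\CC(v)$, since $b^2C\pm cB^2$ has leading term $b^2v^{12}$. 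It vanishes exactly at the roots of $J$, where the nodes of Proposition~\ref{prop:nodes} lie.

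This is the paper's identity \eqref{eq:nodekeyidentity}, reached by a slightly different route. The paper notes that a multiple root satisfies $w^2=a^3A^3$ and $\partial w/\partial s=0$, i.e.\ $s=\varepsilon B^2/b^2$. That is the same condition as the vanishing of one of your quadratic discriminants.

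With the corrected constant and this computation your argument is complete. Your remark that the two factors share no root, because $k\neq0$ and $s\neq0$, is fine.
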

\begin{proof}
Write \eqref{eq:mainW} as $y^2=x^3+fx+g$, with discriminant $\Delta=-16(4f^3+27g^2)$. At $t=0$ the orders of $(f,g,\Delta)$ are $(4,2,4)$, and the leading coefficient of $g/t^2$ is $(54B(v)^2)^2$. The same calculation at infinity gives leading coefficient $(54b^2)^2$. Resolving the transversal $\Aroot$ point gives a split $\IV$ fiber in each case.

Put $q=t^4$, $k=aA(v)$, $d=54b^4$, $h=54B(v)^4$, and $p=cC(v)$. The remaining discriminant is a nonzero constant times
\[
 R(t)=(dt^8+pt^4+h)^2-k^3t^8.
\]
It has degree sixteen and nonzero constant term. A repeated nonzero root would satisfy
$(p+dq+h/q)^2=k^3$ and $d-h/q^2=0$. Hence $q=\varepsilon B(v)^2/b^2$ for $\varepsilon=\pm1$. But \eqref{eq:ABCidentity} gives
\begin{equation}\label{eq:nodekeyidentity}
 \bigl(cC(v)+108\varepsilon b^2B(v)^2\bigr)^2-(aA(v))^3
 =-108\bigl(b^2C(v)-\varepsilon cB(v)^2\bigr)^2,
\end{equation}
which is nonzero in $\CC(v)$. Thus the sixteen roots are simple. At each of them $f\ne0$, so the fiber is nodal of type $\Ione$.
\end{proof}

\begin{Pro}\label{prop:rank12}
The group $\EE(\CC(v,t))$ is torsion-free of rank twelve. Twelve independent points are defined over $\QQ(i)(v,t)$.
\end{Pro}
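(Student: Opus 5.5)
The proof splits into an upper bound coming from Shioda--Tate on the geometric generic K3 fiber and a lower bound coming from twelve explicit sections. The upper bound is the easy half. Regard $\EE$ over $K=\overline{\CC(v)}(t)$: it is the generic fiber of the elliptic fibration on the geometric generic quartic $S$. Lemma~\ref{lem:genericPicard} gives $\rho(S)=18$. Lemma~\ref{lem:K3fibers} shows that the reducible fibers are exactly two split fibers of type $\IV$, each contributing $2$ to the trivial lattice. Shioda--Tate then gives
\[
 \rk\EE(K)=18-2-2\cdot2=12.
\]
Since $\CC(v,t)\subset K$, we get $\rk\EE(\CC(v,t))\le12$.

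Torsion-freeness also follows from Lemma~\ref{lem:K3fibers}. A nontrivial torsion section on an elliptic K3 surface is disjoint from $O$, and its height $4+2\chi\cdot0-\sum\mathrm{contr}$ must vanish. Each $\IV$ fiber contributes at most $2/3$, so the sum of correction terms is at most $4/3<4$, which is a contradiction. Alternatively, torsion of order $n$ embeds into the component groups of fibers at points of additive reduction, here $\ZZ/3$. A $3$-torsion section would have to meet non-identity components at both $\IV$ fibers, but $2\cdot 2/3=4/3\ne4$, so this is excluded as well. Hence $\EE(K)$, and a fortiori $\EE(\CC(v,t))$, is torsion-free.

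For the lower bound, I would write down twelve points $P_1,\dots,P_{12}$ with coordinates in $\QQ(i)(v,t)$ (those of Appendix~\ref{app:sections}) and verify that each satisfies \eqref{eq:mainW}. They come from the geometry of the quartic. The sixteen lines, intersected with the pencil $X_1=tX_3$, give points on the residual cubic \eqref{eq:cubic}, and these are transported to \eqref{eq:mainW} by the explicit birational map of Proposition~\ref{prop:genericidentification}. Further points come from the graph-type curves that Kumar--Kuwata exhibit on $F^{(4)}$, for example points with $X_0$ proportional to $t^{\pm1}X_2$ after using the $\ZZ/4$ symmetry $t\mapsto it$. This symmetry acts on \eqref{eq:mainW} by $x\mapsto -x$, $y\mapsto \pm i\, y$ up to twist, and it multiplies the supply of sections, which explains why the field $\QQ(i)$ appears.

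Independence is the step I expect to be the main obstacle. I would compute the $12\times12$ Néron--Tate height pairing matrix on $S$ using
\[
 \langle P,Q\rangle=2+(P\cdot O)+(Q\cdot O)-(P\cdot Q)-\sum_{t=0,\infty}\mathrm{contr}_t(P,Q).
\]
The $\IV$ contributions ($2/3$ or $1/3$) are read off from whether $x/t^2$ and $y/t^3$ are regular at $t=0$ and $x/t^{4}$ and $y/t^{6}$ are regular at $t=\infty$, and from which of the two non-identity components the section meets. The intersection numbers come from common zeros of $x_P-x_Q$ in $t$. The points should be chosen to be polynomial in $t$ of low degree, so that these are simple root counts. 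Finally, checking that the determinant is nonzero proves rank exactly $12$. If the intersection bookkeeping is too error-prone, I would instead specialize $v$ to a rational value such as $v=3$, where the specialization map on sections is injective, and compute the height matrix of the specialized K3 surface numerically or exactly.
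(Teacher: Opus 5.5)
Your upper bound (Shioda--Tate on the geometric generic K3 surface, $18-2-2\cdot2=12$) and your exclusion of torsion via the height formula are the paper's arguments. Proving the lower bound from the points of Appendix~\ref{app:sections} with a $12\times12$ height matrix is also the paper's route.

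The step that fails is your suggested specialization at $v=3$. That value is one of the twenty-four roots of $J$: it kills the factor $v^4-81$ in \eqref{eq:Jfactor}. Indeed $A(3)=16a$ and $B(3)=8b$, so $j(E_3)=j(E_2)$. By Proposition~\ref{prop:nodes}, the specialized surface then acquires four fibers of type $\mathrm I_2$, at $t^4=C(3)/c=-64$, in addition to the two $\IV$ fibers. Since $E_2\cong E_3$ has no CM, Inose's theorem gives Picard number $2+1+16=19$. Shioda--Tate then gives Mordell--Weil rank $19-2-4-4=9$ over $\CC(t)$.

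So at $v=3$ the twelve specialized points are necessarily dependent, and your claim that the specialization map is injective there is false. A correct height matrix at $v=3$ would be singular; it must include $\mathrm I_2$ corrections, since for instance $P_1,P_2$ pass through these nodes by Table~\ref{tab:nodecover}. You need a value avoiding the zeros of $A$, $B$ and $J$, where the configuration stays two $\IV$ fibers plus sixteen $\Ione$ fibers. The paper takes $v=5$ and finds a positive definite matrix of determinant $4096/9$. Lifting independence needs only that specialization is a homomorphism (good reduction at $v-5$), not that it is injective.

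Your rule for reading off the $\IV$ components also needs correcting. Since $\ord_0 g=2$, any section with $x(0)=0$ has $y(0)=0$ and $\ord_0 y=1$. Hence $x/t^2$ and $y/t^3$ are never both regular at $t=0$, and your test cannot distinguish anything. The correct test has two parts:
\begin{enumerate}
\item whether the section passes through the singular point $x=y=0$ of the Weierstrass fiber (all twelve do);
\item if so, which component it meets, determined by the sign of the leading coefficient of $y/t$.
\end{enumerate}
This yields \eqref{eq:limitszero}--\eqref{eq:limitsinfinity}: all twelve meet the same non-identity component at $t=0$, while $P_j$ and $\Pp j$ meet opposite components at $t=\infty$. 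Without this, the $2/3$ versus $1/3$ corrections in your height matrix would be assigned incorrectly.
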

\begin{proof}
The twelve identities in Appendix~\ref{app:sections} define points over $\QQ(i)(v,t)$. On the restriction $v=5$, their height matrix is positive definite with determinant $4096/9$. A relation among the original points specializes to the same relation over $\CC(t)$: the elliptic curve has good reduction at the valuation $v-5$ of $\CC(t)(v)$, since its specialized discriminant is not identically zero. Thus the original twelve points are independent.

For the upper bound, restrict divisors on the geometric generic K3 to its elliptic generic fiber. The degree part is generated by the zero section, and the kernel of restriction is generated by vertical divisors. This gives the Shioda--Tate formula, and Lemmas~\ref{lem:genericPicard} and \ref{lem:K3fibers} give
\[
 \rk\EE(\overline{\CC(v)}(t))=18-2-2-2=12.
\]
The subgroup over $\CC(v,t)$ therefore has rank exactly twelve.

To exclude torsion, apply the height pairing. On a K3 elliptic surface with two $\IV$ fibers, Shioda's height formula \cite[Theorem~8.6]{Shioda} gives, for every nonzero section $P$,
\[
 \langle P,P\rangle=4+2P\cdot O-
   \operatorname{contr}_0(P)-\operatorname{contr}_\infty(P)
 \ge 4-\frac23-\frac23=\frac83>0.
\]
Here $\operatorname{contr}_0(P)$ and $\operatorname{contr}_\infty(P)$ are the local height corrections at the two fibers of type $\IV$. Each equals $0$ if $P$ meets the component met by $O$, and $2/3$ otherwise. Since $P\ne O$, one has $P\cdot O\ge0$. A torsion section has height zero, so there is no nonzero geometric torsion and hence no torsion over $\CC(v,t)$.
\end{proof}

\section{A Weierstrass model over a rational surface}\label{sec:W}

We blow up the six points of $\FF_3$ identified in Remark~\ref{rem:nonflat} and construct
a Weierstrass model over the resulting surface $\BB$.

Write \eqref{eq:mainW} as $y^2=x^3+fx+g$. The coefficients are
\begin{equation}\label{eq:fg}
 \begin{split}
 f&=-27aA(v)t^4,\\
 g&=2916\bigl(b^4t^{10}+B(v)^4t^2\bigr)+54cC(v)t^6.
 \end{split}
\end{equation}
Let $f_0$ denote a ruling fiber on $\FF_3$. Then
\[
 -K_{\FF_3}=2S_-+5f_0.
\]
The transition in the $v$ direction is
\begin{equation}\label{eq:Wtransition}
 w=v^{-1},\qquad t'=w^3t,\qquad x'=w^{10}x,\qquad y'=w^{15}y.
\end{equation}
It preserves the form of \eqref{eq:fg}, by \eqref{eq:ABC}. At $t=\infty$ put $r=t^{-1}$, $x_\infty=r^4x$, $y_\infty=r^6y$. Thus
\begin{equation}\label{eq:Winfty}
 f_\infty=-27aA(v)r^4,\qquad
 g_\infty=2916b^4r^2+54cC(v)r^6+2916B(v)^4r^{10}.
\end{equation}
Equations \eqref{eq:Wtransition} and \eqref{eq:Winfty} show that $f$ and $g$ are sections of $\LL^4$ and $\LL^6$, where $\LL=\OO_{\FF_3}(-K_{\FF_3})$.
This is the fundamental line bundle of the Weierstrass model: the local
coordinates $x,y$ transform with $\LL^2,\LL^3$, respectively.

The projective Weierstrass cubic $\WW_0\to\FF_3$ lies in
$\PP(\OO\oplus\LL^2\oplus\LL^3)$, with equation
\[
 y^2z=x^3+fxz^2+gz^3.
\]
Denote the ambient projection to $\FF_3$, and its restriction to $\WW_0$, by $\varpi$. If $H$ is the tautological divisor class, this cubic has class $3H+6\LL$, whereas the ambient canonical class is
$-3H+\varpi^*(K_{\FF_3}-5\LL)$. Adjunction therefore gives
\[
 \omega_{\WW_0}=\varpi^*(\omega_{\FF_3}\otimes\LL)\simeq\OO_{\WW_0}.
\]
The zero section is $[x:y:z]=[0:1:0]$. On $y=1$ the derivative of the equation in $z$ is one along it, so the total space is smooth there. Every fiber is a nonzero plane cubic, and the generic fiber is smooth and integral. Thus the total space is integral; its normality follows also from the codimension of its singular locus determined below.

\subsection{The six base blow-ups}
The homogeneous form associated with $B(v)$ has degree six and is
\[
 UV(U^4-V^4).
\]
Its six simple zeros are exactly $\Sigma$. At the points $(v,0)$ with $v\in\Sigma$, the orders of $(f,g)$ are $(4,6)$. Blow up these points and divide the pulled-back coefficients by the fourth and sixth powers of the exceptional equations. This yields a Weierstrass model
\[
 \WW\longrightarrow\BB.
\]
Let $\sigma:\BB\to\FF_3$ be the blow-up morphism and $E_1,\ldots,E_6$ its exceptional curves. The fundamental line bundle of $\WW$ is
\begin{equation}\label{eq:Lbaseblow}
 \LL_{\BB}=\sigma^*\LL-\sum_{i=1}^6E_i=-K_{\BB}.
\end{equation}
In particular, $\omega_{\WW}\simeq\OO_{\WW}$.

\begin{Lem}\label{lem:boundary}
Over the union of the strict transforms of $S_+,S_-$ and the exceptional curves $E_1,\ldots,E_6$, the only singularities of $\WW$ are two disjoint smooth curves of transversal split $\Aroot$ singularities. Their images are the strict transforms of $S_+$ and $S_-$, both of self-intersection $-3$. No other singularity occurs over an exceptional curve of $\BB\to\FF_3$.
\end{Lem}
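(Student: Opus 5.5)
The plan is a direct local computation of the Weierstrass model $\WW$ over each boundary curve, using the explicit coefficients \eqref{eq:fg} and \eqref{eq:Winfty} and the blow-up charts at the six points $(v_i,0)$. A Weierstrass model $y^2=x^3+fx+g$ is singular only at points of the fiber over the discriminant locus where $x$ is the double root, that is, where $y=0$, $3x^2+f=0$, and the partial derivatives of $f x+g$ in the base coordinates vanish. So it suffices to list, along each curve in question, the vanishing orders of $(f,g,\Delta)$ and the local equations of the discriminant.

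\emph{Along $S_-$.} In the chart $r=t^{-1}$, \eqref{eq:Winfty} gives $\ord_r(f_\infty,g_\infty,\Delta)=(4,2,4)$, with leading coefficient $(54b^2)^2$ of $g_\infty/r^2$, a nonzero constant square. Locally the equation is
\[
 y^2=x^3+(54b^2)^2r^2+(\text{terms in } r^4x \text{ and } r^6),
\]
so on every fiber over $S_-$ the singular point is $x=y=0$, $r=0$. In the coordinates $(x,y,r)$ transverse to $S_-$ this is an $\Aroot$ singularity $y^2-c^2r^2=x^3+\dots$, which is split because $c$ is a global constant. The six blow-ups avoid $S_-$, so its strict transform is $S_-$ with self-intersection $-3$.

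\emph{Along $S_+$.} At $t=0$ the leading term of $g/t^2$ is $(54B(v)^2)^2$. This is a square, so the $\Aroot$ curve is split; it is transversal exactly where $B(v)\ne0$. At the six zeros of $B$ we use the blow-up. Take the chart $t=s\tau$ with $s=v-v_i$, so that the exceptional curve is $\{s=0\}$. Dividing by $s^4$ and $s^6$ gives
\[
 \tilde f=-27aA\,\tau^4,\qquad \tilde g=2916\bigl(b^4s^4\tau^{10}+(B/s)^4\tau^2\bigr)+54cC\,\tau^6.
\]
The strict transform of $S_+$ is $\tau=0$, where $\tilde g/\tau^2\to 2916(B'(v_i))^4\ne0$ is still a nonzero square. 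So the $\Aroot$ curve continues smoothly through the blow-up, and the strict transform of $S_+$ has self-intersection $3-6=-3$. The two curves are disjoint because $S_+\cap S_-=\emptyset$.

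\emph{Along the exceptional curves.} In the other chart $s=t\sigma'$ we have $E_i=\{t=0\}$, with $\tilde f=-27aA(v_i+t\sigma')$ and $\tilde g=2916(b^4t^4+(B(v)/t)^4)+54cC$. On $E_i$ these restrict to
\[
 \tilde f|_{E_i}=-27aA(v_i),\qquad \tilde g|_{E_i}=2916(B'(v_i)\sigma')^4+54cC(v_i),
\]
taking $\tau=1/\sigma'$, and similarly in the first chart. Restricted to $E_i$ the discriminant is therefore a polynomial in $\sigma'^4$ with nonzero constant term. Its zeros on $E_i$ are where $(54B'^4\sigma'^4+cC)^2=(aA)^3$, using $A(v_i)^3=C(v_i)^2$ by \eqref{eq:ABCidentity}. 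I would then show that at each such point the discriminant curve meets $E_i$ transversally, or at least that $\WW$ is smooth there. Concretely, the partial derivative of $\tilde g$ in $t$ must not vanish compatibly at the double-root point, which is the gradient condition coming from the linear term in $t$. Since $f\ne0$ there, the fiber is $\Ione$ and the total space is smooth. The point at infinity of $E_i$ lies on the strict transform of $S_+$ and is covered by the previous step.

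The main obstacle is this last transversality check on $E_i$. It is where the identity \eqref{eq:ABCidentity} is needed to simplify, and where a double point of $\WW$ could a priori sit. If the general argument is awkward, I will do an explicit expansion to first order in $t$ at the four roots $\sigma'^4=\text{const}$, one case per $v_i$. Symmetry under $v\mapsto iv$ and $v\mapsto1/v$ (the transition \eqref{eq:Wtransition}) reduces this to $v_i=0$ and $v_i=1$.
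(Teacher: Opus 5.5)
Your treatment of $S_-$ and of the strict transform of $S_+$, including the chart $t=s\tau$ at the blown-up points, matches the paper's. The gap is in the step you yourself call the main obstacle: smoothness of $\WW$ at the singular fibers over $E_i$. You propose to get it from the derivative in $t$, transverse to $E_i=\{t=0\}$, via a first-order expansion in $t$. This cannot work at $v_i=0$, nor at $v_i=\infty$ by \eqref{eq:Wtransition}. There $A$, $C$ and $B(v)/v=v^4-1$ are polynomials in $v^4$. So after $v=t\sigma'$ the minimalized coefficients $\tilde f,\tilde g$ lie in $\CC[\sigma',t^4]$, and $\partial_t(\tilde f x+\tilde g)$ vanishes identically along $E_i$. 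Your reduction to $v_i=0$ and $v_i=1$ therefore leaves the first case undecided. Also, the sentence ``since $f\ne0$ the fiber is $\Ione$ and the total space is smooth'' is not an argument by itself. The ninety-six double points of Proposition~\ref{prop:nodes} also lie over nodal fibers with $f\ne0$.

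The missing observation is that the derivative \emph{along} $E_i$ does the job, and it follows at once from the restrictions you already computed. Write $f_E=\tilde f|_{t=0}$ and $g_E=\tilde g|_{t=0}$. Since $\partial_{\sigma'}\tilde f=-27aA'(v_i+t\sigma')\,t$ vanishes on $t=0$, the $\sigma'$-derivative of the Weierstrass equation at a point of $E_i$ is $\partial_{\sigma'}g_E=11664\,B'(v_i)^4\sigma'^3$, independently of $x$. The restricted discriminant has nonzero constant term, because $C(v_i)^2=A(v_i)^3$, $A(v_i)\ne0$ and $a^3-c^2=108b^4\ne0$. Hence every singular fiber on $E_i$ has $\sigma'\ne0$, this derivative is nonzero, and $\WW$ is smooth there. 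This is exactly the paper's argument with \eqref{eq:firstEchart}. The same computation gives $\partial_{\sigma'}(4f_E^3+27g_E^2)=54g_E\,\partial_{\sigma'}g_E\ne0$ at its zeros, since $f_E\ne0$ forces $g_E\ne0$ there. That is the transversality of the discriminant to $E_i$ you list as an alternative. With this substitution the rest of your proof goes through.
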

\begin{proof}
At a finite zero $v_0$ of $B$, put $s=v-v_0$ and $B(v_0+s)=s\delta(s)$, with $\delta(0)\ne0$. On the base blow-up chart $t=e$, $s=ez$, the coefficients after minimalization restrict at $e=0$ to
\begin{equation}\label{eq:firstEchart}
 f_E=-27aA(v_0),\qquad
 g_E=2916\delta(0)^4z^4+54cC(v_0).
\end{equation}
If $z\ne0$, the derivative $\partial g_E/\partial z$ is nonzero, so the total space is smooth. At $z=0$, the discriminant is nonzero: $C(v_0)^2=A(v_0)^3$, while $a^3-c^2=108b^4\ne0$.

In the other chart $s=e$, $t=ez$, the coefficients are
\begin{equation}\label{eq:secondEchart}
 \begin{split}
 f'&=-27aA(v_0+e)z^4,\\
 g'&=2916\bigl(b^4e^4z^{10}+\delta(e)^4z^2\bigr)
       +54cC(v_0+e)z^6.
 \end{split}
\end{equation}
At $z=0$, the coefficient of $z^2$ in $g'$ is a unit and is a square. The local form is therefore
\begin{equation}\label{eq:A2local}
 y^2=x^3+r^4a_1(r,s)x+r^2b_1(r,s),\qquad b_1(0,s)\ne0.
\end{equation}
This is a transversal $\Aroot$ singularity. The chart over $v=\infty$ is identical by \eqref{eq:Wtransition}. Formula~\eqref{eq:Winfty} gives the same form along $S_-$, with leading coefficient $2916b^4$.

The two curves are disjoint. The self-intersection of $S_+$ drops from $3$ to $3-6=-3$, and $S_-$ is unchanged. The leading coefficients $2916\delta^4$ and $2916b^4$ are squares, so the two exceptional components of each transversal $\Aroot$ are individually defined. Finally, after removing the factor $r^4$ from the discriminant in \eqref{eq:A2local}, its restriction at $r=0$ is a nonzero multiple of $b_1(0,s)^2$. Thus no residual discriminant branch meets either curve, and the charts above account for the whole boundary.
\end{proof}

\begin{Lem}\label{lem:A2resolve}
Blowing up the two singular curves of Lemma~\ref{lem:boundary} resolves them projectively and crepantly. The zero section is unchanged.
\end{Lem}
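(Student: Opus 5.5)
Since the two singular curves are disjoint, I treat each one separately in the local normal form \eqref{eq:A2local}, that is, $y^2=x^3+r^4a_1x+r^2b_1$ with $b_1(0,s)\ne0$. Write $r=0$ for the image curve, namely the strict transform of $S_+$ or $S_-$. Along the curve the singular locus is $\{x=y=r=0\}$, and I will blow up its ideal $(x,y,r)$, a smooth curve in the smooth ambient projective bundle over $\BB$. Because $b_1(0,s)\ne0$ and $2916\delta^4$ and $2916b^4$ are squares, after an analytic change of coordinates (replacing $r$ by $r\sqrt{b_1}$ up to unit, and absorbing the $r^4x$ term into higher-order terms) the equation becomes $y^2-r^2=x^3+(\text{terms of order}\ge 3)$. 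This is the family over $s$ of surface $\Aroot$ singularities $y^2-r^2=x^3$, and the point to use is that it is locally a product with the curve. I do this normalization first, because it reduces everything to the surface case.

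Next I compute the charts of the blow-up of $(x,y,r)$. The equation has multiplicity $2$ along the curve, so the strict transform $\widetilde\WW$ satisfies $K_{\widetilde\WW}=\beta^*K_{\WW}+(3-1-2)E=\beta^*K_\WW$, which gives crepancy. In the surface case, blowing up an $\Aroot$ point yields two $(-2)$-curves meeting in a point and a smooth surface. The split hypothesis means the tangent cone $y^2-b_1(0,s)r^2=(y-\sqrt{b_1}r)(y+\sqrt{b_1}r)$ factors globally along the curve. The exceptional divisor is therefore the union of two ruled surfaces over the curve meeting along a section, and these give the two non-identity components of the split $\IV$ fibers. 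I will check smoothness in the three charts ($x$, $y$, $r$ dominant) by dividing by the square of the exceptional parameter. The only delicate chart is the $r$-chart: there the equation becomes $y_1^2-b_1=r x_1^3+\dots$, which is smooth because $b_1\ne0$. In the other charts the gradient in the new variables is nonzero away from points already covered.

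Finally, projectivity and the zero-section claim. The center is a closed smooth curve, so the blow-up of the global ideal sheaf is projective, and its restriction to $\WW$ is the strict transform. The zero section $[0:1:0]$ lies in the chart $y=1$, $z=0$, while the singular curves lie in $z\ne0$ at $x=y=0$. The two are disjoint, so the blow-up is an isomorphism near the zero section. The main obstacle I expect is the global normal form, that is, making sure the higher-order terms (the $r^4a_1x$ term and the $s$-dependence) do not create extra singular points on the strict transform along the exceptional divisor. I will handle this by checking the gradient in each chart directly with the exact equation instead of the simplified one, using only $b_1(0,s)\ne0$.
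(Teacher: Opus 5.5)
Your proposal is correct and follows essentially the same route as the paper. Both blow up $(x,y,r)$ in the smooth ambient fourfold, get crepancy from multiplicity two along a codimension-three center, and check smoothness chart by chart: the $r$-chart via $y_1^2=b_1(0,s)\neq0$, and the $x$-chart at $r_1=0$ via $\partial/\partial x=1$. Both also use disjointness from the zero section. Your preliminary analytic normalization to $y^2-r^2=x^3+\cdots$ is harmless but unnecessary, since the direct chart computation with the exact equation already suffices.
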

\begin{proof}
In \eqref{eq:A2local}, blow up $(x,y,r)$ in the smooth four-dimensional ambient chart. The center has codimension three and the hypersurface multiplicity is two. For the ambient blow-up $b:\widetilde M\to M$ with exceptional divisor $E$, one has
$K_{\widetilde M}=b^*K_M+2E$ and $\widetilde W=b^*W-2E$. Adjunction therefore gives $K_{\widetilde W}=b^*K_W$; the discrepancy is $3-1-2=0$.

In the $r$-chart, with $x=rx_1$, $y=ry_1$, the equation becomes
\[
 y_1^2=rx_1^3+r^3a_1(r,s)x_1+b_1(r,s).
\]
On $r=0$, the derivative in $y_1$ is nonzero. In the $x$-chart, $y=xy_1$, $r=xr_1$, it becomes
\[
 y_1^2=x+x^3r_1^4a_1(xr_1,s)+r_1^2b_1(xr_1,s).
\]
At the only points not already covered by the $r$-chart, namely $r_1=0$ on the exceptional divisor, one has $y_1=0$ and the derivative in $x$ is nonzero. The remaining chart gives no further point. This proves smoothness near the exceptional locus. The centers are global and disjoint, so the blow-ups are projective. They are contained in the affine Weierstrass chart and do not meet the zero section.
\end{proof}

\section{Ordinary double points and global small resolutions}\label{sec:nodes}

We determine the singularities away from the boundary and construct their small resolutions.

\subsection{The ordinary double points}
Put
\begin{equation}\label{eq:Jdef}
 J(v)=a^3B(v)^4-b^4A(v)^3.
\end{equation}
A useful factorization, with the constants \eqref{eq:constants}, is
\begin{equation}\label{eq:Jfactor}
 \begin{split}
 J(v)={}&-(v^4-16)(16v^4-1)(v^4-81)(81v^4-1)\\
        &\hspace{7mm}\cdot(25v^4-14v^2+25)(25v^4+14v^2+25).
 \end{split}
\end{equation}
In particular, $J$ has twenty-four distinct roots, all in $\QQ(i)$, and none is a zero of $A$, $B$, or $C$.

\begin{Pro}\label{prop:nodes}
Away from the two singular curves in Lemma~\ref{lem:boundary}, the Weierstrass model $\WW$ has exactly ninety-six singular points. They are ordinary double points. More precisely, for every root $v_0$ of $J$, the four corresponding points are
\begin{equation}\label{eq:nodeformula}
 t^4=\frac{C(v_0)}{c},\qquad
 y=0,\qquad x=\frac{3a^2C(v_0)}{cA(v_0)}t^2.
\end{equation}
\end{Pro}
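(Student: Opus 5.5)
A point of $\WW$ lying over the interior part of $\BB$ (away from $S_\pm$ and the $E_i$) is singular exactly when $y=0$, $3x^2+f=0$ and $x^3+fx+g=0$, and moreover $\partial_t(x^3+fx+g)=\partial_v(x^3+fx+g)=0$. So I would first locate the singular points of the discriminant curve. Then I would decide which of them lift to singular points of the total space. Lemma~\ref{lem:boundary} has already handled the boundary, so I work in the affine chart $t\neq0$, $v$ finite. The chart $v=\infty$ then follows from \eqref{eq:Wtransition}; the roots of $J$ are stable under $v\mapsto1/v$ up to a constant, as the factorization \eqref{eq:Jfactor} shows.

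In this chart I use the nonminimal form \eqref{eq:nonminimal}. The substitution $x=t^2x_0$ is an isomorphism for $t\ne0$, so singular points correspond. With $q=t^4$, $k=aA(v)$, $d=54b^4$, $h=54B(v)^4$ and $p=cC(v)$, the equation is
\[
 y_0^2=x_0^3-27kx_0+54(dq+p+h/q).
\]
The fiber singularity conditions are $x_0^2=9k$ and $x_0^3-27kx_0+54(dq+p+h/q)=0$, that is $(dq+p+h/q)=x_0k/27\cdot\ldots$; equivalently $(dq+p+h/q)^2=k^3$ with $x_0=3\,(dq+p+h/q)/k$ up to sign conventions. The $t$-derivative condition is $(d-h/q^2)=0$, as in Lemma~\ref{lem:K3fibers} (note $q\ne0$). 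This forces $q=\varepsilon B(v)^2/b^2$. Substituting, the $v$-derivative condition and the equation reduce by identity \eqref{eq:nodekeyidentity} to $b^2C(v)=\varepsilon cB(v)^2$ and $(dq+p+h/q)^2=k^3$.

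Next I would eliminate. From $b^2C=\varepsilon cB^2$ and \eqref{eq:ABCidentity}, together with $a^3-c^2=108b^4$, we get $a^3B^4=b^4A^3$, that is, $J(v)=0$. Conversely, at a root $v_0$ of $J$ one sign $\varepsilon$ is singled out. Then $t^4=\varepsilon B^2/b^2=C(v_0)/c$, which gives four values of $t$ and the stated $x$ after converting back via $x=t^2x_0$ and $x_0=3a^2C/(cA)$ (using $C/c=\ldots$, so that $k^{3}$ matches). I still need to check that the remaining condition $\partial_v=0$ holds automatically. It should, because for fixed $q$ the relation $(p+dq+h/q)^2-k^3$ equals $-108(\cdot)^2$ and therefore vanishes to second order, so its $v$-gradient vanishes. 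This produces $24\cdot4=96$ points, all distinct since $J$ has distinct roots that avoid the zeros of $A$, $B$ and $C$.

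The main obstacle is showing that each point is an ordinary double point. I would compute the Hessian of $F=-y^2+x^3+fx+g$ at the point. Its $y$ and $x$ directions are nondegenerate, since $\partial_x^2F=6x\ne0$ because $C(v_0)\ne0$. Completing the square, it remains to show that the $2\times2$ Hessian in $(t,v)$ of $g-\frac{2}{3\sqrt3}(-f)^{3/2}$, the discriminant-type function, is nondegenerate. Equivalently, the discriminant curve has an ordinary node there. For this I would use the factorization $R=(P-k^{3/2})(P+k^{3/2})$ with $P=dq+p+h/q$: the point lies on exactly one branch $P=\pm k^{3/2}$. Along that branch, the function $P\mp k^{3/2}=-108(b^2C-\varepsilon cB^2)^2/(\cdot)$ plus a term $(d-h/q^2)$-quadratic in $q-q_0$. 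Its Hessian is diagonalizable with entries proportional to $J'(v_0)^2$-type and $d\ne0$ terms, both nonzero because $J$ has simple roots. Nondegeneracy then gives an $A_1$ singularity.
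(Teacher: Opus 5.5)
Your proposal is correct and follows essentially the paper's own proof: the nonminimal form with $q=t^4$, the reduction to $P=kx_0/3$ and $P_q=0$, the identity \eqref{eq:nodekeyidentity} forcing $b^2C(v)-\varepsilon cB(v)^2=0$ (hence $J(v_0)=0$), and the Schur-complement Hessian built from the order-two vanishing of $P-\kappa$, with $\kappa=\pm k^{3/2}$, along the critical branch. One small correction concerns your check of $F_v=0$: \eqref{eq:nodekeyidentity} does not hold for fixed $q$, only after substituting the $v$-dependent critical value $q_\varepsilon(v)=\varepsilon B(v)^2/b^2$; because $P_q=0$ there, the $v$-derivative of $P(v,q_\varepsilon(v))-\kappa(v)$ equals $P_v-\kappa'$ at the point, and this is $F_v/54$ up to sign (this is exactly the paper's critical-value argument, and it is already implicit in your decomposition of $P-\kappa$ in the last paragraph).
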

\begin{proof}
Work first with $A(v)B(v)t\ne0$. In \eqref{eq:nonminimal} set
\[
 q=t^4,\quad k=aA(v),\quad d=54b^4,\quad h=54B(v)^4,
 \quad p=cC(v),\quad w=p+dq+h/q.
\]
The equation is
\begin{equation}\label{eq:normalizednode}
 F=y_0^2-x_0^3+27kx_0-54w=0.
\end{equation}
At a singular point, $y_0=0$, $x_0^2=9k$, and $w_q=0$. The last equation gives
\[
 q=\varepsilon\frac{B(v)^2}{b^2},\qquad \varepsilon\in\{1,-1\}.
\]
At this critical value of $q$, one has
\[
 w_\varepsilon=cC(v)+108\varepsilon b^2B(v)^2.
\]
The identity \eqref{eq:nodekeyidentity} reads
$w_\varepsilon^2-k^3=-108L_\varepsilon(v)^2$, where
$L_\varepsilon(v)=b^2C(v)-\varepsilon cB(v)^2$.
The equation $F=0$, together with $x_0^2=9k$, is equivalent to $w=kx_0/3$. Consequently \eqref{eq:nodekeyidentity} forces
\[
 L_\varepsilon(v):=b^2C(v)-\varepsilon cB(v)^2=0.
\]
Moreover,
\begin{equation}\label{eq:JLL}
 J(v)=-L_+(v)L_-(v).
\end{equation}
At a root of $J$, exactly one of the two factors vanishes, and it vanishes simply. The resulting value of $q$ is $C(v)/c$. Substitution gives $x_0=3a^2C(v)/(cA(v))$, hence \eqref{eq:nodeformula}.

We now show that each of these points is an ordinary double point. Near such a point choose the analytic branch $\kappa(v)$ with $\kappa^2=k^3$ and $\kappa(v_0)=w_\varepsilon(v_0)$. The critical value of $F$ after solving $F_{x_0}=F_{y_0}=F_q=0$ is
\[
 -54\bigl(w_\varepsilon(v)-\kappa(v)\bigr).
\]
By \eqref{eq:nodekeyidentity},
\begin{equation}\label{eq:doublecriticalvalue}
 w_\varepsilon-\kappa
 =\frac{-108L_\varepsilon^2}{w_\varepsilon+\kappa}.
\end{equation}
The denominator is a unit and $L_\varepsilon$ has a simple zero. The critical value therefore has a zero of order exactly two in $v-v_0$. In particular its first derivative vanishes, which proves $F_v=0$.

The Hessian in the three variables $(x_0,y_0,q)$ at the critical point is diagonal, with entries
\[
 -6x_0,\qquad 2,\qquad -108d/q,
\]
all nonzero. Equation~\eqref{eq:doublecriticalvalue} shows that its Schur complement in the full Hessian is also nonzero. Hence the full Hessian is nondegenerate. Since $q=t^4$ is an \'etale change of coordinate here, each point is an ordinary double point of the original Weierstrass equation.

It remains to examine the points excluded by $AB=0$. If $B=0$ and $t\ne0,\infty$, then $w_q=d\ne0$. If $A=0$, a possible singular point would have $x_0=y_0=w=0$. But the quadratic $dq^2+pq+h$ has nonzero discriminant there, since
\[
 p^2-4dh=-108a^3B(v)^4\qquad\text{modulo }A(v).
\]
Its roots in $q$ are simple, so $F_q\ne0$. The polynomials $A$ and $B$ have no common zero. Lemma~\ref{lem:boundary} covers $t=0,\infty$ and the exceptional boundary, including the chart $v=\infty$. Thus there are no further singular points. Finally, each of the twenty-four roots of $J$ gives four distinct nonzero values of $t$, proving the count.
\end{proof}

\subsection{Section divisors through the double points}
Let $P_1,\ldots,P_6,\Pp1,\ldots,\Pp6$ be the sections specified in Appendix~\ref{app:sections}. We shall use only
\begin{equation}\label{eq:sixsections}
 P_1,\ P_2,\ P_6,\ \Pp1,\ \Pp2,\ \Pp6
\end{equation}
as centers of small resolutions.

\begin{Lem}\label{lem:cover}
Every ordinary double point of Proposition~\ref{prop:nodes} lies on the image of at least one section in \eqref{eq:sixsections}. The indicated image is a smooth local surface at the double point.
\end{Lem}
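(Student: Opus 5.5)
The plan is a direct verification, organized by the $24$ roots $v_0$ of $J$. Proposition~\ref{prop:nodes} gives the node coordinates explicitly:
\[
 t^4=\frac{C(v_0)}{c},\qquad y=0,\qquad x=\frac{3a^2C(v_0)}{cA(v_0)}t^2 .
\]
By \eqref{eq:JLL}, exactly one factor $L_\varepsilon$ vanishes at $v_0$. So the roots split into two classes of twelve, according to $\varepsilon=\pm1$; by \eqref{eq:Jfactor} each class is a union of the quartic factors. I expect the roots of $(v^4-16)(16v^4-1)$ to form one class and $(v^4-81)(81v^4-1)$ together with the two $25v^4\pm14v^2+25$ factors to be distributed accordingly. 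Within each factor the roots are permuted by the symmetries $v\mapsto iv$ and $v\mapsto 1/v$, which are compatible with \eqref{eq:Wtransition} and act on $t$ by fourth roots of unity. The four nodes over a given $v_0$ are likewise permuted by $t\mapsto i t$, since $t^4$ is fixed and $x$ scales by $t^2$. This structure is what makes it plausible that six sections suffice, and it should be visible in the formulas of Appendix~\ref{app:sections}: presumably $P_j$ and $\Pp{j}$ are related by such a symmetry, e.g.\ $t\mapsto it$.

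For the covering claim, I would take each section $P=(x_P(v,t),y_P(v,t))$ in \eqref{eq:sixsections}. I would compute the resultants of $y_P$ and of $x_P-3a^2C(v)t^2/(cA(v))$ with $ct^4-C(v)$, and extract their common factors in $v$. The result is the locus of pairs $(v_0,t_0)$ with $v_0$ a root of $J$ over which $P$ passes through the node. Concretely, I expect $y_P$ to vanish identically on the curves $ct^4=C(v)$, $L_\varepsilon(v)=0$ for specific choices of the fourth root. The incidences collected in the appendix then give a table of $96$ entries. The claim is exactly that each node appears at least once.

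For smoothness of the image at a node, note that a section $P$ is the image of a rational map $\BB\dashrightarrow\WW$. Near a node the base point $(v_0,t_0)$ has $A B t\ne0$, and I need $x_P,y_P$ to be regular there, i.e.\ no pole at $(v_0,t_0)$. If so, the image is locally the graph of a regular map from a smooth surface germ, hence a smooth surface through the node. So the check reduces to verifying that the denominators of $x_P,y_P$ do not vanish at the $96$ base points. These denominators are explicit polynomials in $v,t$ over $\QQ(i)$, and their zeros should be confined to $AB t=0$ or similar boundary loci.

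The main obstacle is the covering step. It is a finite but sizeable computation over $\QQ(i)$ and the number fields generated by fourth roots of $C(v_0)/c$, and it is only convincing if organized by symmetry. I would first verify one representative node in each orbit under the group generated by $v\mapsto iv$, $v\mapsto1/v$, $t\mapsto it$, and the Galois action over $\QQ(i)$. I would then check how the six sections transform under that group, so that orbits of nodes are covered by orbits of sections. If the sections are not closed under this action, I would fall back on the full resultant computation.
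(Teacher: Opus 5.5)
Your plan is the paper's proof. The paper checks each of the $96$ incidences by substituting the node coordinates \eqref{eq:nodeformula} into the explicit section formulas, as recorded in Table~\ref{tab:nodecover}. Every $\tau$ there lies in $\QQ(i)$, so no larger number fields arise. Smoothness follows because the section coordinates are polynomials in $v,t$, so their images are graphs over the smooth base near every node. Your symmetry-based bookkeeping is optional and partly inaccurate: $\pm2$ are roots of $L_+$ while $\pm2i$ are roots of $L_-$, so the two classes are not unions of the quartic factors in \eqref{eq:Jfactor}. Your fallback of direct computation is exactly what is needed.
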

\begin{proof}
All coordinates of the sections are polynomial in $v,t$ in the affine chart containing the double points. Their images are therefore graphs over the smooth base and are smooth surfaces there. Table~\ref{tab:nodecover} gives a fourth root $\tau$ of $C(v_0)/c$ for every root $v_0$ of $J$, and a section passing through the point for each $t=i^j\tau$, $0\le j\le3$. Each table entry is checked by the two substitutions
\[
 x_P(v_0,i^j\tau)=\frac{3a^2C(v_0)}{cA(v_0)}(i^j\tau)^2,
 \qquad y_P(v_0,i^j\tau)=0.
\]
Substitution of the formulas in Appendix~\ref{app:sections} verifies these identities in $\QQ(i)$ for all ninety-six points.
\end{proof}

\begin{Lem}\label{lem:smallresolution}
Let $W$ be a threefold with an ordinary double point and a local Weierstrass equation $y^2=x^3+f(s,t)x+g(s,t)$. Suppose a regular section $(p(s,t),q(s,t))$ passes through the double point. Blowing up the ideal of its graph gives a smooth small crepant resolution near that point.
\end{Lem}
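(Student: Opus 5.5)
We plan to reduce to the standard local model by a change of fiber coordinates that moves the section to the origin. Put $x_1=x-p(s,t)$ and $y_1=y-q(s,t)$. Since $q^2=p^3+fp+g$ identically, the equation becomes
\[
 y_1^2+2qy_1=x_1^3+3px_1^2+(3p^2+f)x_1 .
\]
This has the form $y_1\,\ell_1=x_1\,\ell_2$, with
\[
 \ell_1=y_1+2q,\qquad \ell_2=x_1^2+3px_1+3p^2+f .
\]
In these coordinates the graph of the section is the smooth surface $\{x_1=y_1=0\}$, whose ideal is $(x_1,y_1)$. The double point lies on it by hypothesis. At that point $x_1=y_1=0$, and the vanishing of the gradient in $y$ and $x$ forces $q=0$ and $3p^2+f=0$. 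Hence $\ell_1$ and $\ell_2$ both vanish there.

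The next step is to show that $x_1,y_1,\ell_1,\ell_2$ form a local coordinate system of the ambient smooth fourfold with coordinates $(x_1,y_1,s,t)$. Equivalently, $\ell_1$ and $\ell_2$ restricted to $x_1=y_1=0$, namely $2q(s,t)$ and $3p^2+f$, must have independent differentials in $(s,t)$ at the point. I would derive this from nondegeneracy of the Hessian. Modulo terms of order at least three, the quadratic part of $y_1\ell_1-x_1\ell_2$ is $y_1\,\ell_1^{(1)}-x_1\,\ell_2^{(1)}$, where $\ell_j^{(1)}$ are the linear parts. The term $y_1^2$ is absorbed because $\ell_1=y_1+2q$. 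This quadratic form in four variables has rank $4$ exactly when $\ell_1^{(1)},\ell_2^{(1)},x_1,y_1$ are linearly independent, that is, when $d(2q)$ and $d(3p^2+f)$ are independent in $(s,t)$. Ordinary double point means rank $4$, so the condition holds.

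Once this is done, the germ is analytically $\{y_1\ell_1=x_1\ell_2\}\subset\CC^4$ in coordinates $(x_1,y_1,\ell_1,\ell_2)$, which is the standard cone $\{ad=bc\}$. The ideal $(x_1,y_1)$ is the ideal of one of the planes through the vertex. Blowing it up in $W$ gives the graph of $[x_1:y_1]=[\ell_2:\ell_1]$. Explicitly, it is covered by the two charts $y_1=\lambda x_1,\ \ell_2=\lambda\ell_1$ and $x_1=\mu y_1,\ \ell_1=\mu\ell_2$. Each chart is smooth, isomorphic to $\CC^3$ in coordinates $(x_1,\ell_1,\lambda)$ and $(y_1,\ell_2,\mu)$ respectively.

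The exceptional locus is the line $\PP^1$ over the node, so the map is small. Crepancy is automatic for a small resolution of a Gorenstein threefold: the canonical divisors agree off a codimension-two set. Away from the point the ideal is that of a Cartier divisor, because the graph is smooth in a smooth threefold, so the blow-up is an isomorphism there. Blowing up a coherent ideal is projective.

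The main obstacle is the independence of $\ell_1$ and $\ell_2$. This requires carefully checking that the cubic term $x_1^3$ and the $3px_1^2$ term do not interfere, which they cannot since both have order at least two and are divisible by $x_1$. It also requires checking that the transversality is genuinely equivalent to the Hessian condition rather than an additional assumption. I expect this to reduce to the linear-algebra observation above.
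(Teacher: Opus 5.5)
Your proof is correct and follows essentially the same route as the paper. Both use the translation $x_1=x-p$, $y_1=y-q$ to get $y_1(y_1+2q)=x_1(x_1^2+3px_1+3p^2+f)$, deduce from Hessian nondegeneracy that $\dd q$ and $\dd(3p^2+f)$ are independent, and blow up $(x_1,y_1)$. The only difference is the last step: you take $(x_1,y_1,\ell_1,\ell_2)$ as coordinates to recognize the standard cone, whereas the paper checks the two charts directly through the nonvanishing differentials $2\lambda\,\dd q-\dd h_*$ and $2\,\dd q-\mu\,\dd h_*$. There is one small slip: $y_1\ell_1=x_1\ell_2$ gives $[x_1:y_1]=[\ell_1:\ell_2]$, not $[\ell_2:\ell_1]$, although the chart equations you then write down are the correct ones.
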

\begin{proof}
Put $U=x-p$, $V=y-q$, and $h_*=3p^2+f$. The exact equation becomes
\begin{equation}\label{eq:sectionfactor}
 V(V+2q)=U(U^2+3pU+h_*).
\end{equation}
At the double point $q=h_*=0$. The determinant of the Hessian of \eqref{eq:sectionfactor} is
\[
 4\left(\det\frac{\partial(q,h_*)}{\partial(s,t)}\right)^2.
\]
Since the point is ordinary double, $\dd q$ and $\dd h_*$ are independent base differentials.

Blow up $(U,V)$. On the chart $V=\lambda U$, the strict transform is
\begin{equation}\label{eq:smallchartone}
 U\lambda^2+2q\lambda-U^2-3pU-h_*=0.
\end{equation}
Along the exceptional fiber, its base differential is $2\lambda\dd q-\dd h_*$, which cannot vanish. On the other chart $U=\mu V$, the equation is
\begin{equation}\label{eq:smallcharttwo}
 V+2q-\mu^3V^2-3p\mu^2V-\mu h_*=0,
\end{equation}
and the corresponding differential is $2\dd q-\mu\dd h_*$, again nonzero. These charts are smooth and the exceptional fiber is a single $\PP^1$. Thus the resolution is small. A small resolution of this Gorenstein singularity is crepant, since there is no exceptional divisor on which a discrepancy could be supported.
\end{proof}

\begin{Pro}\label{prop:globalflat}
The Weierstrass model $\WW\to\BB$ admits a projective crepant resolution $X\to\WW$ such that $X\to\BB$ is flat and has a regular zero section.
\end{Pro}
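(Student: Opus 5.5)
The construction is a composite of the local resolutions already established, and I will assemble it in three stages. First, apply Lemma~\ref{lem:A2resolve} to blow up the two disjoint curves of transversal $\Aroot$ singularities found in Lemma~\ref{lem:boundary}. This gives a projective crepant partial resolution $\WW_1\to\WW$ that is an isomorphism away from those curves and does not meet the zero section. By Lemma~\ref{lem:boundary} and Proposition~\ref{prop:nodes}, the only singularities remaining on $\WW_1$ are the ninety-six ordinary double points, and all of them lie in the affine Weierstrass chart over the open set $t\ne0,\infty$ of $\BB$, away from the exceptional curves.

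Second, I resolve the double points by blowing up global Weil divisors, one section at a time, in a fixed order, say $P_1,P_2,P_6,\Pp1,\Pp2,\Pp6$. For the first section $P$, let $Z_P\subset\WW_1$ be the closure of its image. Since $P$ is a rational section of a fibration whose generic fiber is smooth, $Z_P$ is a prime Weil divisor, and I blow up its ideal sheaf. Away from the double points lying on $Z_P$, the divisor $Z_P$ is Cartier: at smooth points this is automatic, and it must also be checked along the $\Aroot$ exceptional loci. If $Z_P$ passes through singular points of $\WW_1$ that are not nodes, I will replace $\WW_1$ by its smooth part over $t\ne0,\infty$ and argue locally. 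Blowing up a Cartier divisor is an isomorphism, so near each double point on $Z_P$ the blow-up is the local one of Lemma~\ref{lem:smallresolution}, which by Lemma~\ref{lem:cover} is smooth and small. This blow-up is projective because it is the blow-up of a coherent ideal. For the later sections I take strict transforms, which are again images of the same rational sections. By Lemma~\ref{lem:cover}, every node lies on at least one of the six images. The node is resolved by the first section through it in the chosen order, and the later blow-ups are isomorphisms there because the node has become a smooth point, where the strict transform is Cartier. The composite $X\to\WW$ is then projective and small over the nodes. It is crepant by Lemma~\ref{lem:A2resolve} and Lemma~\ref{lem:smallresolution}, so $K_X\simeq\OO_X$ by \eqref{eq:Lbaseblow}.

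Third, I check flatness and the zero section. The zero section lies on the locus $y=1$, $z=0$, where $\WW$ is smooth, and none of the centers meets it. It therefore lifts to a regular section of $X$. For flatness: $X$ is smooth, hence Cohen--Macaulay, and $\BB$ is a smooth surface. By miracle flatness it is enough to show that all fibers are one-dimensional. The Weierstrass fibers are plane cubics. The $\Aroot$ blow-ups replace a point of a fiber by curves, namely the two components over each point of $S_\pm$. The small resolutions insert a single $\PP^1$ over each node. No fiber of $X\to\BB$ therefore acquires a surface.

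The main obstacle is the global step: I must check that the image of each chosen section is Cartier at every point other than the nodes it contains, especially where it meets the $\Aroot$ loci over $t=0,\infty$ and the exceptional curves $E_i$ after minimalization. To handle this, I would first use the explicit formulas in Appendix~\ref{app:sections} to determine the poles and orders of the sections there, and show that near $S_\pm$ and the $E_i$ the images pass through smooth points of $\WW_1$. If some image fails this, I would instead blow up only over an open neighbourhood of the nodes and glue. Projectivity is the issue there, and I would restore it by taking the ideal $\mathcal I_{Z_P}+\mathcal I_{\mathrm{nodes}}^N$ for large $N$.
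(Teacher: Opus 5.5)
Your proposal is correct and follows the paper's proof: resolve the two $\Aroot$ curves by Lemma~\ref{lem:A2resolve}, blow up the ideal sheaves of the six section divisors in turn, and note that each blow-up is an isomorphism wherever the divisor is Cartier and is the small resolution of Lemma~\ref{lem:smallresolution} at the nodes; then observe that nothing changes near the zero section and conclude by miracle flatness, since all fibers are one-dimensional. The ``main obstacle'' you flag is not one: after Lemma~\ref{lem:A2resolve} the threefold is smooth away from the ninety-six nodes, as you note yourself, so every $Z_P$ is automatically Cartier along the $\Aroot$ exceptional loci and over the $E_i$ (exactly what the paper uses), and you should drop the fallback with $\mathcal I_{Z_P}+\mathcal I_{\mathrm{nodes}}^N$, which would in fact fail because that ideal is $\mathfrak m$-primary at each node and its blow-up would create an exceptional divisor there.
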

\begin{proof}
The Weierstrass hypersurface is Cohen--Macaulay and its singular locus has codimension at least two, by Lemma~\ref{lem:boundary} and Proposition~\ref{prop:nodes}. It is therefore normal. First apply Lemma~\ref{lem:A2resolve}. The resulting projective normal threefold is smooth away from the ninety-six double points. Take the closures of the six section images in \eqref{eq:sixsections}. These are global effective Weil divisors. At every smooth point their ideals are invertible, so blowing them up is an isomorphism there. Near a double point on a given section, the ideal is the graph ideal $(U,V)$ from Lemma~\ref{lem:smallresolution}; hence its blow-up is a smooth small resolution.

Blow up successively the ideal sheaves of the strict transforms of these six divisors. Once a double point has been resolved, every later divisor is Cartier in its smooth neighborhood and later blow-ups are isomorphisms there. Lemma~\ref{lem:cover} shows that all double points are resolved. Each blow-up is a projective morphism defined by a global ideal sheaf, and every step is crepant.

The zero section of the Weierstrass model is smooth and is disjoint from all singular centers. The later divisorial ideals are invertible near it, even where their supports meet it. The zero section is therefore unchanged and remains regular.

Every fiber of the resolved morphism has dimension one. The curve blow-ups add curves over the two discriminant curves, and the small resolutions add only $\PP^1$'s over the isolated base points. They introduce no surface over a point of $\BB$. Since $X$ is Cohen--Macaulay and $\BB$ is smooth, the equidimensional morphism is flat by miracle flatness. Its fibers are connected: its generic fiber is geometrically connected, and Stein factorization over the normal base has a finite birational factor, which is an isomorphism.
\end{proof}

\begin{Cor}\label{cor:XCY}
The threefold $X$ of Proposition~\ref{prop:globalflat} is Calabi--Yau, and $\rk\MW(X/\BB)=12$.
\end{Cor}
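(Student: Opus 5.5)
The corollary has two parts: the Calabi--Yau conditions on $X$, and the rank of its Mordell--Weil group. I would treat them separately.

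\emph{Trivial canonical class.} I would transport the vanishing of the canonical class along the sequence of crepant modifications. By \eqref{eq:Lbaseblow} the Weierstrass model $\WW$ is a Gorenstein hypersurface with $\omega_{\WW}\simeq\OO_{\WW}$. Lemma~\ref{lem:A2resolve} and Proposition~\ref{prop:globalflat} show that every step of $X\to\WW$ is crepant, so $K_X\simeq\OO_X$. Smoothness and projectivity are part of Proposition~\ref{prop:globalflat}.

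\emph{Vanishing of $H^1(\OO)$ and $H^2(\OO)$.} I see two routes.
\begin{enumerate}
\item Since $X\to\WW$ is a crepant resolution, $\WW$ has canonical, hence rational, singularities \cite[5.22]{KM}. Then $H^j(X,\OO_X)=H^j(\WW,\OO_{\WW})$. For a Weierstrass fibration one has
\[
 \varpi_*\OO_{\WW}=\OO_{\BB},\qquad R^1\varpi_*\OO_{\WW}\simeq\LL_{\BB}^{-1}=K_{\BB}.
\]
The Leray spectral sequence then reduces the claim to cohomology on $\BB$. On the rational surface $\BB$ we have $H^1(\OO_{\BB})=H^2(\OO_{\BB})=0$, and Serre duality gives $H^0(K_{\BB})=0$ and $H^1(K_{\BB})=0$. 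Hence $H^1(\OO_X)=0$. For $H^2(\OO_X)$ one also needs $H^2(\OO_{\BB})=0$ and $H^1(K_{\BB})=0$, both of which hold.
\item Alternatively, I would use that $X$ and $Y$ are birational smooth projective varieties. The numbers $h^{0,q}$ are birational invariants, and Proposition~\ref{prop:QCY} already gives them for $Y$. This second route is cleaner, so I would use it. It needs $X$ birational to $Y$; that follows because both have function field $\CC(v,t)(\EE)$, by Proposition~\ref{prop:genericidentification} and the construction of $\WW$ from \eqref{eq:mainW}.
\end{enumerate}

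\emph{Mordell--Weil rank.} By the Conventions, $\MW(X/\BB)$ is the group of rational sections, that is, the group of $\CC(\BB)$-points of the generic fiber. Since $\CC(\BB)=\CC(v,t)$ and the generic fiber of $X\to\BB$ is \eqref{eq:mainW}, this group is $\EE(\CC(v,t))$. Proposition~\ref{prop:rank12} gives rank twelve. Rational sections extend to regular ones only after further modification, but this does not affect the group as defined.

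\emph{Main obstacle.} The only delicate point is making the birationality between $X$ and $Y$ precise, in case one prefers the first route instead. The first route instead requires the standard computation $R^1\varpi_*\OO_{\WW}\simeq\LL_{\BB}^{-1}$ for a flat Weierstrass model with a section. The birational-invariance argument avoids both issues.
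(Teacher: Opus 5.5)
Your proposal is correct, and the route you choose is the paper's proof. Crepancy of $X\to\WW$ together with \eqref{eq:Lbaseblow} gives $K_X\simeq\OO_X$; birationality with $Y$ via Proposition~\ref{prop:genericidentification}, together with birational invariance of $h^{0,q}$ and Proposition~\ref{prop:QCY}, gives the two vanishings; and the rank is read off from Proposition~\ref{prop:rank12}. Your alternative Leray argument on $\WW$ is also sound (it uses rationality of the singularities, $R^1\varpi_*\OO_{\WW}\simeq K_{\BB}$ and $R^2\varpi_*\OO_{\WW}=0$), and it would make the Calabi--Yau statement independent of the quartic model $Y$.
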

\begin{proof}
Let $\nu:X\to\WW$ be the crepant resolution of
Proposition~\ref{prop:globalflat}. Equation~\eqref{eq:Lbaseblow} gives
$\omega_{\WW}\simeq\OO_{\WW}$, and hence
\[
 \omega_X\simeq\nu^*\omega_{\WW}\simeq\OO_X.
\]
Proposition~\ref{prop:genericidentification} identifies the function
fields of $X$ and $Y$ with that of the same elliptic curve over
$\CC(v,t)$. Thus $X$ and $Y$ are birational smooth projective threefolds.
Birational invariance of holomorphic forms, together with Hodge
symmetry and Proposition~\ref{prop:QCY}, gives
\[
 h^j(X,\OO_X)=h^j(Y,\OO_Y)=0,\qquad j=1,2.
\]
This proves that $X$ is Calabi--Yau. Its Mordell--Weil group is the
group of points of \eqref{eq:mainW} over $\CC(v,t)$, whose rank is
twelve by Proposition~\ref{prop:rank12}.

The two models have different morphisms to the base: $Y\to\FF_3$ has
surface fibers, whereas $X\to\BB$ is flat by
Proposition~\ref{prop:globalflat}. Flatness is not inferred from their
birational equivalence.
\end{proof}

\begin{Rem}\label{rem:constructionmethod}
One may seek further examples by changing the constant elliptic curve or
the varying family in the product construction. The associated Kummer
surfaces and a suitable base change must provide an explicit K3 model
whose coefficient degrees allow an anticanonical compactification,
as in Section~\ref{sec:origin}. The singularities of the
compactification must then admit a projective crepant resolution, and the
vanishing of $H^1(\OO)$ and $H^2(\OO)$ must be checked.
To obtain a flat elliptic model, one must also control the degenerations
over the base. In the present example, the sections used for the rank
calculation provide global divisors for the small resolutions. For another
family, their field of definition, independence, and incidence with the
singularities require separate verification.
\end{Rem}

\section{Hodge numbers}\label{sec:hodge}

We compute the Picard number and Euler characteristic using the flat elliptic fibration $X\to\BB$.

\begin{Pro}\label{prop:Picard}
One has $\rho(X)=25$.
\end{Pro}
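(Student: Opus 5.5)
We would compute $\rho(X)$ with the Shioda--Tate--Wazir formula for the flat elliptic fibration $\pi:X\to\BB$ of Proposition~\ref{prop:globalflat}. That proposition gives a regular zero section, and $X$ is smooth and Calabi--Yau by Corollary~\ref{cor:XCY}. In this setting $\NS(X)\otimes\QQ$ is spanned by three kinds of classes:
\begin{enumerate}
\item the zero section;
\item the pullbacks of divisors on the base;
\item the fibral divisors not meeting the zero section, i.e.\ the components of reducible fibers over codimension-one points of $\BB$ not meeting $O$.
\end{enumerate}
Modulo this span, the group is isomorphic to $\MW(X/\BB)\otimes\QQ$. The formula reads
\[
 \rho(X)=1+\rho(\BB)+\sum_{\Delta_j}(m_j-1)+\rk\MW(X/\BB).
\]

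First, $\rho(\BB)=\rho(\FF_3)+6=8$, since $\BB$ is $\FF_3$ blown up at six points. Second, $\rk\MW=12$ by Corollary~\ref{cor:XCY}. Third, we need the fibral term. Part~(iii) of the theorem is what we must establish here: Lemma~\ref{lem:boundary} and Lemma~\ref{lem:A2resolve} show that over the strict transforms of $S_\pm$ the curve blow-ups produce split $\IV$ fibers. Each such curve contributes two divisors, irreducible because the fibers are split, as noted via the square leading coefficients. The total fibral term is therefore $2+2=4$. This gives $1+8+4+12=25$.

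To justify that there are no other fibral divisors, we would use Lemma~\ref{lem:K3fibers} and Proposition~\ref{prop:nodes}. Away from $t=0,\infty$ the discriminant has only $\Ione$ fibers over generic points of its components; the sixteen simple roots in $t$ correspond to irreducible discriminant curves with nodal generic fiber. The exceptional curves $E_i$ carry smooth generic fibers, since the discriminant of \eqref{eq:firstEchart} is nonzero for generic $z$. The small resolutions of the ninety-six nodes add only curves over points, so they add no divisors.

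The main obstacle is the rigorous justification of the Shioda--Tate--Wazir formula in this setting. It requires two inputs. One is that $H^1(X,\OO_X)=H^2(X,\OO_X)=0$, which holds by Corollary~\ref{cor:XCY}, so $\NS(X)=H^2(X,\ZZ)$ modulo torsion. The other is that the restriction map from $\NS(X)$ to the Picard group of the generic fiber has kernel spanned by vertical divisors, which are pullbacks of base divisors plus fibral components. I would prove the kernel statement directly, with the argument of Proposition~\ref{prop:rank12} lifted to $X$. A divisor trivial on the generic fiber is linearly equivalent to a divisor supported on $\pi^{-1}$ of curves. Flatness means every divisor supported on $\pi^{-1}(C)$, for an irreducible curve $C\subset\BB$, is a combination of $\pi^*C$ and fibral components over $C$. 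Independence of the listed classes follows by intersecting with fibral curves, using the $\Aroot$ Cartan matrix, and with curves in the base.
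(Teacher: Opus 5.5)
Your proposal is correct and follows essentially the paper's argument: you restrict to the generic fiber to get the $1+12$ horizontal part, flatness confines vertical divisors to curves, the only reducible generic fibers lie over the two split $\IV$ curves and contribute $2+2$, pullback from $\BB$ is injective via the zero section, and $\rho(X)=8+1+12+2+2=25$. One small tightening: the sixteen roots in $t$ need not give sixteen separate discriminant components, and your discriminant analysis does not exclude components lying over ruling fibers; the quickest way to rule out further fibral divisors is that, away from the strict transforms of $S_\pm$, the resolution $X\to\WW$ changes only finitely many points, so over the generic point of any other curve the fiber is an untouched Weierstrass cubic, which is always irreducible and reduced.
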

\begin{proof}
Let $K=\CC(\BB)$. Restrict divisors on the smooth threefold $X$ to the generic elliptic curve $\EE/K$. Every closed point of $\EE$ extends by closure to a divisor on $X$, which is Cartier because $X$ is smooth. Restriction therefore surjects onto $\Pic(\EE)$. The rational zero splits its degree map, so its degree-zero part is $\EE(K)$ and
\[
 \rk\Pic(\EE)=1+\rk\EE(K)=13.
\]
The kernel is generated by vertical divisors: a rational function giving a principal divisor on $\EE$ belongs to $K(X)$, and subtracting its divisor leaves a vertical divisor.

Flatness excludes divisors over points of $\BB$. Every vertical prime divisor therefore lies over a curve. Over all curves other than the two split $\IV$ curves the generic fiber is irreducible; the zero section excludes multiple fibers. Their vertical classes are pullbacks from the base. Over each split $\IV$ curve there are exactly two further independent component classes. Their independence modulo pullbacks follows by intersecting with the nonidentity components of a transverse surface fiber: the resulting matrix is the nonsingular negative $\Aroot$ Cartan matrix. Pullback from $\BB$ is injective because it has a left inverse given by the zero section. Since $\rho(\BB)=2+6=8$, the vertical rank is $8+2+2=12$.

Finally, $H^1(X,\OO_X)=0$ by Corollary~\ref{cor:XCY}, so $\Pic^0(X)=0$. Taking ranks in the restriction sequence gives
\begin{equation}\label{eq:flatPicard}
 \rho(X)=\rho(\BB)+1+\rk\MW(X/\BB)+2+2
        =8+1+12+2+2=25.
\end{equation}
\end{proof}

Let $C_+,C_-\subset\BB$ be the strict transforms of $S_+,S_-$, and let $D$ be the residual discriminant after removing their multiplicity-four contributions.

\begin{Lem}\label{lem:residualdiscriminant}
The curve $D$ is reduced and disjoint from $C_+\cup C_-$. It has arithmetic genus $177$, exactly $96$ ordinary nodes and $64$ ordinary cusps, and no other singularities.
\end{Lem}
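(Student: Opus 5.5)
Since $\LL_{\BB}=-K_{\BB}$, the full discriminant $\Delta=-16(4f^3+27g^2)$ is a section of $\LL_{\BB}^{12}=-12K_{\BB}$. By Lemma~\ref{lem:boundary} it vanishes to order exactly four along $C_+$ and $C_-$: the local form \eqref{eq:A2local} gives $\ord_r\Delta=4$ with residual restriction a nonzero multiple of $b_1(0,s)^2$. Hence
\[
 D\in\bigl|-12K_{\BB}-4C_+-4C_-\bigr|,
\]
and the same lemma already shows that $D$ does not meet $C_+\cup C_-$. In particular it does not meet the exceptional curves, since over them only the two $\Aroot$ curves carry discriminant. The arithmetic genus then comes from adjunction, $p_a(D)=1+\tfrac12 D\cdot(D+K_{\BB})$. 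The needed intersection numbers are $K_{\BB}^2=8-6=2$, $C_\pm^2=-3$, $C_+\cdot C_-=0$, and $K_{\BB}\cdot C_\pm=1$ (a smooth rational curve of self-intersection $-3$). Expanding gives
\[
 D^2=288-96-96-48-48=0,\qquad D\cdot K_{\BB}=24-4-4=16,
\]
so $D\cdot(D+K_{\BB})=352$ and $p_a=177$.

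For reducedness and the singularities I would work in the affine chart with $t\ne0,\infty$ and $AB\ne0$, where \eqref{eq:nonminimal} applies. Up to a unit, the residual discriminant there is $R=(dq^2+pq+h)^2-k^3q^2$ with $q=t^4$; rewritten in terms of $w$, this is $q^2(w^2-k^3)$. Lemma~\ref{lem:K3fibers} already gives reducedness: the generic vertical slice over $\overline{\CC(v)}$ consists of sixteen simple roots.

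Singular points of $D$ sit where $\partial_t R$ and $\partial_v R$ vanish simultaneously, and I would classify them in two families.
\begin{enumerate}
\item \emph{Cusps.} At points with $f=g=0$, which in a Weierstrass discriminant $4f^3+27g^2$ are cusps whenever $f$ and $g$ meet transversally. So one counts the transverse intersections of $A(v)=0$ with $g=0$ away from $t=0,\infty$. The eight roots of $A$ give $p^2-4dh\neq0$ by the computation in Proposition~\ref{prop:nodes}, so the quadratic $dq^2+pq+h$ has two simple roots $q$. Each yields four values of $t$, for $8\cdot2\cdot4=64$ points. At each of these, $f$ vanishes simply in $v$ and $g$ simply in $t$, so the local equation is $\lambda f^3+\mu g^2$ with $(f,g)$ local coordinates: an ordinary cusp.
\item \emph{Nodes.} Away from $f=0$, a singular point of $D$ is a point where the fiberwise discriminant has a double root, namely $w_q=0$, i.e.\ $q=\varepsilon B^2/b^2$, together with $w^2=k^3$. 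By \eqref{eq:nodekeyidentity} this forces $L_\varepsilon=0$, giving exactly the $96$ points of \eqref{eq:nodeformula}. There I would show that the Hessian of $w-\kappa$ in $(q,v)$ is nondegenerate. The second $q$-derivative is $2h/q^3\ne0$. The function $w_\varepsilon-\kappa$ vanishes to order exactly two in $v$ by \eqref{eq:doublecriticalvalue}, so the Schur complement is nonzero, exactly as in the proof of Proposition~\ref{prop:nodes}. A nondegenerate critical point of the local equation $w-\kappa=0$ is an ordinary node. Alternatively one can cite the general fact that over an ordinary double point of a Weierstrass model with $f\ne0$, the discriminant has an ordinary node.
\end{enumerate}

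The remaining loci have to be excluded: $B=0$ with $t\neq0,\infty$, where $w_q=d\ne0$ shows $D$ is smooth, and the charts at $v=\infty$, which are symmetric by \eqref{eq:Wtransition}. I expect the main obstacle to be the cusp analysis, specifically verifying the transversality that makes them ordinary. This requires $A$ to have simple roots, which holds since $\operatorname{disc}A\neq0$, and $g$ to have nonzero $t$-derivative there, which follows from the simple roots in $q$ together with $t\ne0$. I also need to confirm that no point with $f=0$ lies on the node locus; this follows because $J$ shares no root with $A$ by \eqref{eq:Jfactor}. As a consistency check, a rational $D$ would need $\delta=177$, whereas here $96+64=160$, so $D$ has geometric genus $17$ (if irreducible). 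This can be matched against $e(X)=48$ later, but it is not needed for the lemma.
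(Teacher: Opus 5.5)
Your route is essentially the paper's: class of $D$ plus adjunction, the sixteen simple roots, cusps at $A=g=0$, and nodes from $w_q=0$ together with \eqref{eq:nodekeyidentity}. Your node and cusp analyses in the chart $t\neq0,\infty$ are correct. Two things need fixing, and the first is a genuine gap.

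\textbf{The exceptional curves.} The claim that $D$ does not meet the exceptional curves is false. It comes from misreading Lemma~\ref{lem:boundary}. That lemma says $\WW$ has no \emph{singularities} over $E_i\setminus C_+$; it does not say the discriminant avoids $E_i$. In fact
\[
 D\cdot E_i=-12K_{\BB}\cdot E_i-4C_+\cdot E_i-4C_-\cdot E_i=12-4-0=8.
\]
Concretely, in the chart \eqref{eq:firstEchart} the restriction $4f_E^3+27g_E^2$ is a polynomial of degree eight in $z$ with nonzero constant term. So $D$ passes through eight points of each $E_i$.

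These points lie over $t=0$ in $\FF_3$, so your chart $t\neq0,\infty$ never sees them. Your proof of ``no other singularities'' therefore omits $48$ points of $D$. You need the paper's check there. At such a point $z\neq0$ and $g_E\neq0$, while $f_E$ is constant in $z$ and $\partial g_E/\partial z=11664\delta(0)^4z^3\neq0$. Hence $\partial_z(4f_E^3+27g_E^2)\neq0$, and $D$ is smooth (indeed transverse to $E_i$) there.

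Alternatively, apply the principle you mention only as an aside for nodes. Where $(f,g)\neq(0,0)$, $D$ is singular exactly where the total space is, and Lemma~\ref{lem:boundary} gives smoothness of $\WW$ over $E_i\setminus C_+$.

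The same omission affects reducedness. Sixteen simple roots over $\overline{\CC(v)}$ only rule out multiple horizontal components. You must also exclude ruling fibers and the curves $E_i$ as components of $D$. For ruling fibers, note that the leading $t$-coefficient $d^2$ of $R$ is a nonzero constant. For the $E_i$, note that the degree-eight polynomial above is not identically zero.

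\textbf{The intersection numbers.} Your displayed values are wrong. With $D=-12K_{\BB}-4C_+-4C_-$, the cross terms $2\cdot12\cdot4\,K_{\BB}\cdot C_\pm$ enter with a plus sign, and $D\cdot K_{\BB}=-12K_{\BB}^2-4-4$. So
\[
 D^2=288-48-48+96+96=384,\qquad D\cdot K_{\BB}=-32,
\]
not $0$ and $16$. Your values give $D\cdot(D+K_{\BB})=16$, not the $352$ you then use. The genus $177$ follows only from the corrected numbers.
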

\begin{proof}
The blow-up description and adjunction give
\begin{equation}\label{eq:baseintersections}
 K_{\BB}^2=2,\qquad C_+^2=C_-^2=-3,\qquad
 K_{\BB}\cdot C_+=K_{\BB}\cdot C_-=1,\qquad C_+\cdot C_-=0.
\end{equation}
The discriminant is a section of $-12K_{\BB}$, and its multiplicity along each $C_\pm$ is four. Thus
\begin{equation}\label{eq:residualclass}
 D=-12K_{\BB}-4C_+-4C_-.
\end{equation}
Intersecting \eqref{eq:residualclass} gives
\begin{equation}\label{eq:residualnumbers}
 D^2=384,\qquad K_{\BB}\cdot D=-32,\qquad
 D\cdot C_+=D\cdot C_-=0,\qquad
 p_a(D)=1+\frac{384-32}{2}=177.
\end{equation}

The degree-sixteen residual discriminant over $\CC(v)$ is square-free by Lemma~\ref{lem:K3fibers}. There is no component over a point of the ruling: on the finite $v$-chart its leading coefficient in $t$ is a nonzero constant; the chart at $v=\infty$ is identical under \eqref{eq:Wtransition}. Over a base exceptional curve the discriminant of \eqref{eq:firstEchart} is not identically zero. Hence $D$ is reduced. Lemma~\ref{lem:boundary} shows that it is disjoint from $C_+\cup C_-$ and smooth at its remaining boundary points. More explicitly, in \eqref{eq:firstEchart}, a discriminant zero has $z\ne0$ and $g_E\ne0$, and
$\partial g_E/\partial z=11664\delta(0)^4z^3\ne0$; hence the discriminant is smooth there.

Consider first a discriminant point with $(f,g)\ne(0,0)$. The plane cubic has an ordinary node, with nondegenerate Hessian in its two fiber coordinates. The holomorphic Morse lemma with parameters puts the total equation locally in the form
\[
 u_1^2+u_2^2-h(s_1,s_2)=0;
\]
up to a unit its discriminant is $h=0$. The total space is singular precisely when $h=\dd h=0$. At each of the $96$ points of Proposition~\ref{prop:nodes}, the full Hessian is nondegenerate, so the Hessian of $h$ is nondegenerate. The corresponding discriminant point is therefore an ordinary node. Conversely, every singular discriminant point with $(f,g)\ne(0,0)$ gives a singularity of the total Weierstrass space and is on that complete list.

It remains to consider $f=g=0$ away from the boundary. The polynomial $A(v)$ has eight simple roots and none is a root of $B$: indeed
$A'(v)=8v^3(v^4+7)$, while $A(0)=1$ and $A(v)=-48$ if $v^4=-7$; also $A=16$ if $v^4=1$. At a zero of $A$, the equation $g=0$ with $t\ne0$ becomes
\begin{equation}\label{eq:cuspquadratic}
 d\theta^2+p\theta+h=0,\qquad
 \theta=t^4,\quad d=54b^4,\quad p=cC(v),\quad h=54B(v)^4.
\end{equation}
By \eqref{eq:ABCidentity} and $a^3-c^2=108b^4$,
\begin{equation}\label{eq:cuspdiscriminant}
 p^2-4dh\equiv-108a^3B(v)^4\not\equiv0\pmod{A(v)}.
\end{equation}
Thus the quadratic has two distinct nonzero roots, and each gives four distinct nonzero values of $t$. There are exactly $8\cdot2\cdot4=64$ such points. At each of them $\partial f/\partial v\ne0$, $\partial f/\partial t=0$, and $\partial g/\partial t\ne0$. Hence $f,g$ are local base coordinates and the discriminant has the ordinary cusp equation $4f^3+27g^2=0$. The node list and the boundary analysis exclude any other singularity of $D$.
\end{proof}

\begin{Pro}\label{prop:Euler}
One has $e(X)=48$ and $(h^{1,1}(X),h^{2,1}(X))=(25,1)$.
\end{Pro}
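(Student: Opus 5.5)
The plan is to compute $e(X)$ by stratifying $\BB$ according to the fiber type of the flat fibration $\pi:X\to\BB$ of Proposition~\ref{prop:globalflat}, and using additivity and multiplicativity of the topological Euler characteristic over strata on which $\pi$ is a locally trivial fiber bundle. Smooth fibers have $e=0$, so only the discriminant locus contributes. By Lemmas~\ref{lem:boundary} and~\ref{lem:residualdiscriminant}, the discriminant is the disjoint union $C_+\sqcup C_-\sqcup D$, with $C_\pm\simeq\PP^1$.

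\emph{Contribution of $C_\pm$.} Over every point of $C_\pm$ the resolved fiber is a split fiber of type $\IV$. This follows from Lemma~\ref{lem:A2resolve}: the blow-up of the transversal $\Aroot$ curve gives three rational curves through one point, uniformly along the curve. Such a fiber has $e=4$. Since $D$ does not meet $C_\pm$, the fibration is topologically locally trivial over each $C_\pm$, and the contribution is $2\cdot 2\cdot 4=16$.

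\emph{Contribution of $D$.} Here I use the singularity data of Lemma~\ref{lem:residualdiscriminant}.
\begin{enumerate}
\item The geometric genus of $D$ is $177-96-64=17$, so its normalization has $e=-32$.
\item Each node identifies two preimage points and each cusp has a single preimage, so $e(D)=-32-96=-128$.
\item The smooth locus $D^{\circ}$ of $D$ therefore has $e(D^{\circ})=-128-160=-288$.
\item Over $D^{\circ}$ the fibers are nodal cubics of type $\Ione$, with $e=1$. This includes the points of $D$ on the exceptional curves $E_i$, where Lemma~\ref{lem:boundary} shows $D$ is smooth and $(f,g)\ne(0,0)$. The contribution is $-288$.
\item Over the $64$ cusps the Weierstrass fiber is a cuspidal cubic (type II) in a smooth part of the total space, so $e=2$. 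The contribution is $128$.
\item Over the $96$ nodes the Weierstrass fiber is a nodal cubic whose node is the ordinary double point of Proposition~\ref{prop:nodes}. The small resolution of Lemma~\ref{lem:smallresolution} replaces this point by a $\PP^1$. The resulting fiber has $e=1-1+2=2$, giving $192$.
\end{enumerate}

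Summing the contributions gives
\[
 e(X)=16-288+128+192=48 .
\]

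For the Hodge numbers: since $X$ is Calabi--Yau (Corollary~\ref{cor:XCY}), we have $h^{2,0}=0$. Hence $h^{1,1}=\rho(X)=25$ by Proposition~\ref{prop:Picard}. Moreover $e(X)=2(h^{1,1}-h^{2,1})$, so $h^{2,1}=25-24=1$.

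The main obstacle is justifying the fiber Euler numbers and local triviality on each stratum. Two points need care. First, I must check that the fibers over the smooth points of $D$ lying over the $E_i$ really are $\Ione$; by \eqref{eq:firstEchart} one has $f_E\neq0$ there. Second, I must check that the exceptional $\PP^1$ over each node meets the strict transform of the nodal cubic in two points, so that the fiber is an $\mathrm I_2$-type cycle. The Euler count $2$ only requires that the $\PP^1$ replace a single point, which holds because the resolution is an isomorphism away from the double point.
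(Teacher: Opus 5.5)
Your proposal is correct and follows the paper's proof essentially step for step. It uses the same stratification $C_+\sqcup C_-\sqcup D^{\mathrm{sm}}\sqcup\{\text{nodes}\}\sqcup\{\text{cusps}\}$, the same fiber Euler numbers $4,1,2,2$, and the same deduction of $(25,1)$ from $\rho(X)=25$ and $e=2(h^{1,1}-h^{2,1})$. The one small difference is that writing ``geometric genus $177-96-64=17$'' tacitly assumes $D$ is irreducible. The paper instead uses $\chi(\OO_{\widetilde D})=\chi(\OO_D)+n+c$, which gives $e(\widetilde D)=-32$ without that assumption, so your value is right in any case.
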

\begin{proof}
For any reduced projective curve with $n$ ordinary nodes and $c$ ordinary cusps, its normalization $\nu:\widetilde D\to D$ satisfies
\[
 \chi(\OO_{\widetilde D})=\chi(\OO_D)+n+c.
\]
Every node identifies two points of the normalization; a cusp does not change the number of points. Since $e(\widetilde D)=2\chi(\OO_{\widetilde D})$, this gives, without assuming irreducibility,
\[
 e(D)=2-2p_a(D)+n+2c.
\]
Lemma~\ref{lem:residualdiscriminant} therefore gives
\begin{equation}\label{eq:residualEuler}
 e(D)=2-2\cdot177+96+2\cdot64=-128,\qquad
 e(D^{\mathrm{sm}})=-128-96-64=-288.
\end{equation}

Over $D^{\mathrm{sm}}$ the fiber has type $\Ione$ and Euler characteristic one. At a cusp the fiber is a cuspidal cubic, with Euler characteristic two; the total space is smooth there. At each discriminant node the small resolution replaces the node of a nodal cubic by $\PP^1$, so the fiber has Euler characteristic $1-1+2=2$. Along each $C_\pm\simeq\PP^1$ the fiber has type $\IV$ everywhere, with Euler characteristic four, by the uniform boundary resolution and the disjointness of the residual discriminant. Smooth elliptic fibers contribute zero. Additivity of Euler characteristic over these strata gives
\begin{equation}\label{eq:flatEuler}
 e(X)=4\bigl(e(C_+)+e(C_-)\bigr)+e(D^{\mathrm{sm}})+2\cdot96+2\cdot64
      =16-288+192+128=48.
\end{equation}
Since $X$ is Calabi--Yau, Proposition~\ref{prop:Picard} gives $h^{1,1}(X)=\rho(X)=25$. Finally $e(X)=2(h^{1,1}(X)-h^{2,1}(X))$, so $h^{2,1}(X)=1$.
\end{proof}

\begin{Rem}\label{rem:quarticEuler}
The Euler characteristic can also be computed on the quartic resolution $Y$. The singularity on each special quartic fiber is analytically $z^2-q(x,y)=0$, after absorbing the unit multiplying $z^2$. Its Milnor number is
\[
 \dim_{\CC}\CC[[x,y]]/(q_x,q_y)=3\cdot3=9,
\]
since the two cubic partial derivatives form a homogeneous complete intersection. The Milnor-fiber formula \cite[Theorems~6.5 and~7.2]{Milnor} gives $e(\Qquart_{v_0})=24-9=15$. A degree-two del Pezzo exceptional surface is a double plane branched over a smooth genus-three quartic and has Euler characteristic $2\cdot3-(2-2\cdot3)=10$. Thus $e(Y_{v_0})=15-1+10=24$, and
\[
 e(Y)=(2-6)\cdot24+6\cdot24=48.
\]
\end{Rem}

\begin{proof}[Proof of Theorem~\ref{thm:main}]
We take $X$ to be the resolution of $\WW$ constructed in
Proposition~\ref{prop:globalflat}. That proposition gives projectivity,
flatness, connected fibers, and a regular zero section.
Corollary~\ref{cor:XCY} proves that $X$ is Calabi--Yau, using the
quartic resolution $Y$ only for the vanishing of $H^1(\OO)$ and
$H^2(\OO)$. Proposition~\ref{prop:genericidentification} gives the
stated generic equation, and Proposition~\ref{prop:rank12} determines
its torsion-free Mordell--Weil group of rank twelve. Lemma~\ref{lem:boundary}, Proposition~\ref{prop:nodes}, and Lemma~\ref{lem:residualdiscriminant} show that the only reducible codimension-one fibers are the two split $\IV$ fibers. The other discriminant curves have generic fiber $\Ione$; the cuspidal fibers occur only at isolated points. Propositions~\ref{prop:Picard} and \ref{prop:Euler} give the Hodge numbers directly on $X$.
\end{proof}

\section{Variation of the fixed elliptic curve}\label{sec:variation}

Varying $u$ gives resolutions $Y_u$ of the quartic model, with $Y_2=Y$.
We determine the rank over the parameter field, the deformations of $Y$,
the birational type of the resulting fourfold, and the Galois
representation on $H^3(Y_2)$.
The flat model $X$ of Theorem~\ref{thm:main} was constructed for $u=2$;
the deformation statements below concern $Y_u$.

\subsection{Rank over the parameter field and isogenous pairs}

\begin{Pro}\label{prop:threeparamrank}
The elliptic curve
\begin{equation}\label{eq:threeparam}
\begin{split}
 \EE_{u,v}:\quad y^2={}&x^3-27A(u)A(v)t^4x\\
 &+2916\bigl(B(u)^4t^{10}+B(v)^4t^2\bigr)
   +54C(u)C(v)t^6
\end{split}
\end{equation}
has Mordell--Weil rank twelve over $\QQ(i)(u,v,t)$.
\end{Pro}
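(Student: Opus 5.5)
Two bounds are needed: a lower bound of twelve coming from explicit points, and an upper bound of twelve coming from the geometric generic K3 fiber. For the lower bound, the sections of Appendix~\ref{app:sections} should exist for general $u$. By Section~\ref{sec:origin}, the constants $a,b,c$ in \eqref{eq:mainW} are just $A(2),B(2),C(2)$. Hence \eqref{eq:threeparam} specializes to \eqref{eq:mainW} at $u=2$. I would carry out the construction of Proposition~\ref{prop:genericidentification} with $25,9$ replaced by $(u^2+1)^2,(u^2-1)^2$. This identifies \eqref{eq:threeparam} with the residual cubic pencil of \eqref{eq:originquartic}, with the invariants \eqref{eq:projectionIJ} now reading $16A(u)A(v)t^8$, and so on. The twelve sections in the appendix come from the sixteen lines, from the torsion of $E_u$ and $E_v$, and from the base change $r=t^2$. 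So I expect them to lift to points $P_j(u,v,t)$ with coordinates in $\QQ(i)(u,v,t)$, obtained by replacing every occurrence of the constants of $E_2$ by the corresponding functions of $u$. Concretely, I would write down these lifted formulas and verify the Weierstrass identity symbolically.

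Independence then follows by specialization, as in Proposition~\ref{prop:rank12}. The discriminant of \eqref{eq:threeparam} is not identically zero at $u=2$, so \eqref{eq:threeparam} has good reduction at the valuation $u-2$ of $\QQ(i)(v,t)(u)$. The specialization map on Mordell--Weil groups is therefore a homomorphism. It sends the lifted points to the twelve points of Proposition~\ref{prop:rank12}, which are independent. A relation among the lifts would specialize to a relation among those points, so the lifts are independent and the rank is at least twelve.

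For the upper bound, I would pass to $\overline{\QQ(i)(u,v)}(t)$, where the curve is the elliptic K3 surface $F^{(4)}_{E_u,E_v}$. The argument of Lemma~\ref{lem:genericPicard} goes through unchanged. The curves $E_u$ and $E_v$ have independent transcendental $j$-invariants, so $\Phi_N(j(E_u),j(E_v))\ne0$, and Inose's theorem gives $\rho=18$. The fiber analysis of Lemma~\ref{lem:K3fibers} also carries over. There are two split $\IV$ fibers at $t=0,\infty$, with leading coefficients $(54B(v)^2)^2$ and $(54B(u)^2)^2$. The remaining sixteen fibers are of type $\Ione$, since identity \eqref{eq:nodekeyidentity} holds with $a,b,c$ replaced by $A(u),B(u),C(u)$; this is because \eqref{eq:ABCidentity} holds in both variables. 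Shioda--Tate then gives rank $18-2-4=12$ over the algebraic closure, so the rank over $\QQ(i)(u,v,t)$ is at most twelve.

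The main obstacle is the first step: checking that the appendix sections genuinely come from $u$-dependent formulas, rather than from identities that hold only because $u=2$. If a section uses a special feature of $u=2$, I would instead reconstruct it from the lines of \eqref{eq:originquartic}, which are defined for all $u$. I would use the four-torsion of $E_u$ over $\CC(u)$, which involves only $\sqrt{-1}$, together with the explicit halving formulas of Section~\ref{sec:origin}. Then I would check that these reconstructed points specialize at $u=2$ to points spanning a rank-twelve lattice. This can be confirmed through the height determinant $4096/9$ computed at $v=5$.
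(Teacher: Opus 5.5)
Your proposal is correct and essentially matches the paper's proof. You get the lower bound by specializing the lifted appendix points at $u=2$ using good reduction, and the upper bound from geometric non-isogeny of $E_u,E_v$ together with the Picard-number/Shioda--Tate count of Proposition~\ref{prop:rank12}, which the paper packages as \cite[Proposition~2.8]{KK}. The step you flag as the main obstacle is already settled in Appendix~\ref{app:sections}, where the points are written with $u$ as a free parameter; the only caution is that the primed points lift as $P_j(iu)$, which involves $B(iu)=iB(u)$ rather than substituting $B(u)$ for $b$.
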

\begin{proof}
The formulas of Appendix~\ref{app:sections}, evaluated at $u$ and $iu$,
give twelve points of \eqref{eq:threeparam}. Good reduction at the
valuation $u-2$ specializes them to the independent points of
Proposition~\ref{prop:rank12}, proving the lower bound twelve.
Since $u,v$ are independent parameters, $E_u$ and $E_v$ are
geometrically non-isogenous. The upper bound follows from
\cite[Proposition~2.8]{KK}, as in Proposition~\ref{prop:rank12}.
\end{proof}

\begin{Rem}\label{rem:isogenies}
For a smooth pair $E_u,E_v$ with distinct $j$-invariants, the geometric
Mordell--Weil rank of $F^{(4)}$ is
\[
 12+\rk\Hom(E_u,E_v),
\]
by \cite[Proposition~2.8]{KK}. A nontrivial modular relation
$\Phi_N(j(E_u),j(E_v))=0$ therefore gives geometric rank thirteen
generically on any component where both parameters vary and the curves
are not isomorphic. The additional section may require a finite
extension of the component's function field. Since the restriction
changes the coefficient degrees, a Calabi--Yau model requires a new
anticanonical calculation as in Section~\ref{sec:origin}.
For fixed $u=2$, an isogenous specialization of $E_v$ changes the rank
of one K3 fiber; the threefold rank over $\CC(v,t)$ remains twelve.

Equal $j$-invariants require separate treatment: reducible fibers change,
and the rank formula has a correction of four if the common $j$ is
different from $0,1728$, and of eight otherwise
\cite[Proposition~2.8]{KK}. In the present threefold,
$J(v)=0$ is exactly the condition $j(E_v)=j(E_2)$.
These are the twenty-four parameters that produce the ordinary double
points in Proposition~\ref{prop:nodes}.
\end{Rem}

\subsection{Deformations of the quartic resolution}

Put $U=\mathbb A^1\setminus\{0,1,-1,i,-i\}$ and replace the fixed
binary quartic $q$ by
\[
 q_u(X_0,X_1)=X_0X_1\bigl(X_0+(u^2+1)^2X_1\bigr)
                       \bigl(X_0+(u^2-1)^2X_1\bigr).
\]
The resulting quartic in the same bundle $M$ is denoted by $\Qquart_u$.
Its weighted resolution is denoted by $Y_u$, so $Y_2=Y$.

\begin{Pro}\label{prop:deformations}
The threefolds $Y_u$, $u\in U$, form a smooth projective family of
Calabi--Yau threefolds with $h^{2,1}(Y_u)=1$. Its Kodaira--Spencer map
at $u=2$ is an isomorphism. In particular, the family is locally complete
there, and none of its smooth fibers is rigid.
\end{Pro}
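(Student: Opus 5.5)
We first show that the construction of Sections~\ref{sec:quartic} and~\ref{sec:resolution} works uniformly over $U$. For $u\in U$ the binary quartic $q_u$ has four distinct projective zeros. Indeed $(u^2+1)^2$ and $(u^2-1)^2$ are nonzero and distinct exactly when $u\ne0,\pm1,\pm i$. Hence the proofs of Lemma~\ref{lem:sixsing}, Lemma~\ref{lem:weightedblowup} and Proposition~\ref{prop:Qresolution} go through verbatim. They only used that $q$ has four distinct zeros, and the local initial forms \eqref{eq:localzero}, \eqref{eq:localone} do not involve $u$ except through $q_u$. The six singular points of $\Qquart_u$ are sections of the total family $\Qquart_U\to U$, namely $X_0=X_1=0$ over $v\in\Sigma$. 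So the weighted blow-ups can be performed in the relative fourfold $M\times U$ with $u$ as a parameter. The lemma then gives smoothness of the total space $\mathcal Y\to U$ and of every fiber, so the family is smooth and projective. Propositions~\ref{prop:anticanonical} and~\ref{prop:QCY} give that each $Y_u$ is Calabi--Yau. Remark~\ref{rem:quarticEuler} gives $e(Y_u)=48$ uniformly. For $h^{2,1}(Y_u)=1$ we use deformation invariance of Hodge numbers in a smooth projective family, together with $h^{2,1}(Y_2)=1$. The latter follows from Proposition~\ref{prop:Euler} and the birational invariance of $h^{2,1}$ between $X$ and $Y$, since birational Calabi--Yau threefolds have equal Hodge numbers.

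For the Kodaira--Spencer statement, $H^1(T_{Y_2})\simeq H^{2,1}(Y_2)^\vee\otimes H^{3,0}$ is one-dimensional, so it suffices to show that the Kodaira--Spencer class $\kappa(\partial_u)$ is nonzero at $u=2$. By Griffiths transversality this is equivalent to $\nabla_{\partial_u}\omega\notin F^3H^3$ for a holomorphic $3$-form $\omega$, that is, to the period map of the family being nonconstant at $u=2$. We plan to detect nonconstancy on a computable invariant instead of computing residues. The invariant is the K3 fibers, whose moduli record the $j$-invariant of $E_u$. Suppose the family were infinitesimally trivial at $u=2$. Since $h^{1,0}=h^{2,0}=0$, all deformations are unobstructed (Bogomolov--Tian--Todorov), and a zero Kodaira--Spencer map at a point of a one-parameter family means first-order triviality there. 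We also need the map to be nonzero on a dense set, so it suffices to show that $Y_u$ are not all isomorphic (or that the family is not isotrivial). One route: an isomorphism $Y_u\simeq Y_{u'}$ preserves $H^3$, and the correspondence announced in the abstract identifies $H^3(Y_u)$ with $H^1(E_u)\otimes T(\mathcal S)$, whose Hodge structure determines $E_u$ up to isogeny. But this correspondence is stated later, so we would rather argue directly. Write $\omega_u$ as the residue of $\Omega/F_u$, with $F_u$ the quartic equation. Its $u$-derivative is the residue of $-\partial_uq_u\,\Omega/F_u^2$. Griffiths' reduction of pole order shows that this class lies in $F^3$ exactly when $\partial_u q_u$ lies in the Jacobian ideal of $F_u$ in the appropriate degree (modulo the resolution corrections).

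The main obstacle is this Jacobian-ring computation on a singular hypersurface in a projective bundle rather than in $\PP^4$. We would try first to work fiberwise over the $v$-line. On a smooth quartic fiber $\Qquart_{u,v}$, the derivative $\partial_uq_u$ is nonzero in the Jacobian ring of the quartic surface, since it deforms the cross-ratio of the four roots of $q_u$. This variation is nonzero in $H^1(T_{\Qquart_{u,v}})$, which gives a nonzero first-order deformation of the general K3 fiber. A global trivialization of $Y_u$ to first order would restrict, up to a vector field moving the fibration along $\PP^1_v$ (and there are none, since $H^0(T_{Y})=0$), to a trivial deformation of the general fiber. That is a contradiction. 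Finally, since $\dim H^1(T_{Y_u})=1$, a nonzero Kodaira--Spencer map is an isomorphism, local completeness follows from the Kuranishi family being smooth of dimension one, and nonrigidity of every fiber follows from $h^{2,1}(Y_u)=1$.
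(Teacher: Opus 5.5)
The first half of your proposal (uniform weighted blow-ups over $U$, Calabi--Yau fibers, and $h^{2,1}(Y_2)=1$ transported along $U$) matches the paper. You use birational invariance of Hodge numbers where the paper uses Batyrev's equality of Betti numbers; either works.

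\textbf{The gap.} It lies in the Kodaira--Spencer step, at ``up to a vector field moving the fibration along $\PP^1_v$ (and there are none, since $H^0(T_Y)=0$)''. A first-order trivialization $\Psi\colon Y_2\times\operatorname{Spec}\CC[\epsilon]\to\mathcal Y_\epsilon$ need not commute with the maps to $\PP^1_v$. Composing the projection with $\Psi$ gives a first-order deformation of $\phi\colon Y_2\to\PP^1_v$ with fixed source and target. Such deformations are classified by $H^0(Y_2,\phi^*T_{\PP^1})=H^0(\PP^1,T_{\PP^1})\simeq\CC^3$, since $\phi_*\OO_{Y_2}=\OO_{\PP^1}$. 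The vanishing of $H^0(T_{Y_2})$ says nothing about this space. Write the deformed map as $\phi+\epsilon\,\chi\circ\phi$ for a vector field $\chi$ on $\PP^1$. On a smooth fiber $S=\Qquart_{2,v_0}$ you then only get
\[
 \kappa_u(v_0)=-\chi(v_0)\,\kappa_v(v_0)\qquad\text{in }H^1(S,T_S),
\]
where $\kappa_v$ is the Kodaira--Spencer class of the pencil in the $v$-direction. The K3 fibers genuinely vary with $v$, so knowing only $\kappa_u(v_0)\ne0$ gives no contradiction.

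\textbf{How to close it.} You must show that $\kappa_u$ is not proportional to $\kappa_v$, and this follows from the structure you already invoke. By Thom--Sebastiani, the Jacobian ring of $G_{v_0}(X_2,X_3)-q_u(X_0,X_1)$ is $R_{q_u}\otimes R_{G_{v_0}}$, and $R_4$ injects into $H^1(T_S)$. The class of $\partial_uq_u$ lies in the summand $(R_{q_u})_4\otimes(R_G)_0$, while that of $\partial_vG$ lies in $(R_{q_u})_0\otimes(R_G)_4$. Proportionality therefore forces $\partial_uq_u\in J(q_u)_4$.

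\textbf{The check at $u=2$.} You also need to verify ``it deforms the cross-ratio'' at $u=2$ itself. One has $\partial_uq_u\notin J(q_u)_4$ exactly when $\dd j(E_u)/\dd u\ne0$. This fails at zeros of $A(u)C(u)$, and holds at $u=2$ because $\dd j/\dd u=64A(u)^2C(u)/B(u)^5$ and $C(2)\ne0$. Your aside that non-isotriviality would suffice does not help here: it only gives nonvanishing at general $u$, not at $u=2$.

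\textbf{The paper's route.} The paper avoids the fibration altogether. It uses the exceptional del Pezzo surface $D_2$ over $v=0$, which is rigid in $Y_2$ because $N_{D_2/Y_2}\simeq K_{D_2}$ has no sections. A first-order trivialization of $Y_2$ therefore trivializes $D_u$ and its branch quartic $q_u(x,y)=4s^4$. Since $H^0(T_{C_2})=0$, it also trivializes the involution $s\mapsto -s$, and hence its quotient $E_u$. This contradicts $\dd j/\dd u|_{u=2}\ne0$. That argument needs no control of vector fields on the base and no K3 Jacobian-ring computation.
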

\begin{proof}
For $u\in U$, the polynomial $q_u$ has four distinct projective zeros.
The six weighted blow-up centers of Proposition~\ref{prop:Qresolution}
are independent of $u$, and its charts give a smooth projective family
$\mathcal Y\to U$. Adjunction and Proposition~\ref{prop:QCY} show
that its fibers are Calabi--Yau. By \cite[Theorem~1.1]{Batyrev}, the
smooth projective birational varieties $X$ and $Y_2$, both with trivial
canonical bundles, have equal Betti numbers. Hence $b_3(Y_2)=4$ and
$h^{2,1}(Y_2)=1$. Constancy of Hodge numbers over the connected base
$U$ gives $h^{2,1}(Y_u)=1$.

The exceptional divisor over $v=0$ is the degree-two del Pezzo surface
\[
 D_u:\quad -z^2+4s^4-q_u(x,y)=0
       \quad\text{in }\PP(1,1,1,2).
\]
Adjunction gives $N_{D_2/Y_2}\simeq K_{D_2}$ and
$H^0(D_2,N_{D_2/Y_2})=0$. A trivial first-order deformation of $Y_2$
would therefore make $D_u$ constant, since embedded deformations of
$D_2$ in $Y_2$ have tangent space $H^0(D_2,N_{D_2/Y_2})$.
The branch curve of its anticanonical double cover would also be constant:
\[
 C_u:\quad q_u(x,y)=4s^4.
\]
On this smooth plane quartic, $H^0(C_2,T_{C_2})=0$ forces the
involution $s\mapsto-s$ to be constant under a first-order
trivialization. Its quotient is $E_u$, via $\xi=x/y$ and
$\eta=2(s/y)^2$, so $E_u$ would be constant to first order. However,
\eqref{eq:jEz} gives
\[
 \frac{\dd j(E_u)}{\dd u}
   =\frac{64A(u)^2C(u)}{B(u)^5},
 \qquad \left.\frac{\dd j(E_u)}{\dd u}\right|_{u=2}\ne0.
\]
Thus the Kodaira--Spencer map is nonzero between one-dimensional
spaces, hence an isomorphism. The Bogomolov--Tian--Todorov theorem
gives unobstructedness, local completeness at $u=2$, and, since
$h^{2,1}(Y_u)=1$, non-rigidity of every smooth fiber.
Degenerations outside $U$ are not covered by this statement.
\end{proof}

\subsection{An order-four action on third cohomology}

The set of six weighted blow-up ideals on $\Qquart_2/\QQ$ is Galois-stable,
so their normalized blow-ups define $Y_2/\QQ$.
Write $G_K=\operatorname{Gal}(\overline{\QQ}/K)$.

\begin{Cor}\label{cor:H3splitting}
Let $\ell$ be a prime and let $L=\QQ_\ell(i)$. There is a decomposition
\[
 H^3_{\mathrm{\acute et}}(Y_{2,\overline{\QQ}},\QQ_\ell)
       \otimes_{\QQ_\ell}L=V_i\oplus V_{-i},
 \qquad \dim_LV_i=\dim_LV_{-i}=2,
\]
whose summands are stable under $G_{\QQ(i)}$. Elements of
$G_{\QQ}\setminus G_{\QQ(i)}$ interchange them.
\end{Cor}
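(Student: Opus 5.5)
We will construct an automorphism $\sigma$ of $Y_2$, defined over $\QQ(i)$ but not over $\QQ$, of order four, which acts on $H^3$ with eigenvalues $\pm i$. Then $V_{\pm i}$ will be the eigenspaces.

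The natural candidate comes from $E_v$. Consider the substitution $v\mapsto iv$ on the base. We have $\alpha(iv)=(1-v^2)^2=\beta(v)$ and $\beta(iv)=\alpha(v)$, so the left side of \eqref{eq:quartic} is invariant if we keep $X_2,X_3$ fixed. On the $u$-side, $q$ is unchanged under the identity. Combined with the transition rule \eqref{eq:bundletransition}, the assignment $(v,X_0,X_1,X_2,X_3)\mapsto(iv,X_0,X_1,X_2,X_3)$ defines an automorphism of $M$ preserving $\Qquart_2$. It has order four on the base, is defined over $\QQ(i)$, and permutes $\Sigma$: it fixes $0,\infty$ and cycles $1,i,-1,-i$. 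It therefore permutes the six weighted blow-up ideals, which are written in a $v$-equivariant way in Proposition~\ref{prop:Qresolution}, and lifts to an automorphism $\sigma$ of $Y_2$ over $\QQ(i)$. Complex conjugation $\gamma\in G_\QQ\setminus G_{\QQ(i)}$ satisfies $\gamma\sigma\gamma^{-1}=\sigma^{-1}=\sigma^3$, because conjugation sends $i$ to $-i$.

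Next we determine the action of $\sigma$ on $H^3$. The plan is to show that $\sigma^2$ acts by $-1$. The map $\sigma^2$ is $v\mapsto -v$, which on the family $E_v$ corresponds to an automorphism of the product $E_u\times E_v$ acting as $-1$ on $H^1(E_v)$, or on the modular K3 side as $-1$ on $T(\mathcal S)$. The cleanest verification uses $H^3(Y)\cong H^{3,0}\oplus H^{2,1}\oplus\cdots$ together with the holomorphic $3$-form. By adjunction this form is the residue of $\Omega_M/F$, where $F$ is the quartic equation, and locally
\[
 \omega=\operatorname{Res}\frac{\dd v\wedge \dd X_0\wedge \dd X_1\wedge \dd X_2}{F}\quad(X_3=1).
\]
Under $v\mapsto iv$ this form is multiplied by $i$. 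Hence $\sigma^*\omega=i\omega$, and complex conjugation gives $-i$ on $H^{0,3}$.

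The remaining task is to fix the action on $H^{2,1}\oplus H^{1,2}$, which is four-dimensional in total together with $H^{3,0}\oplus H^{0,3}$, since $b_3=4$ by Proposition~\ref{prop:deformations}. Here we use the $\sigma$-equivariance of the Kodaira--Spencer map. The deformation parameter $u$ is untouched by $\sigma$, so the tangent direction $\partial_u$ is $\sigma$-invariant. Its contraction with $\omega$ spans $H^{2,1}$, so $H^{2,1}$ carries the same eigenvalue $i$, and $H^{1,2}$ carries $-i$. Thus $\sigma^*$ has eigenvalues $i,i,-i,-i$ on $H^3(Y_2,\CC)$.

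By the comparison theorem, this eigenvalue statement transfers to the $\ell$-adic realization of $\sigma^*$, whose characteristic polynomial $(T^2+1)^2$ has $\QQ$-coefficients. Define $V_{\pm i}$ as the $\pm i$-eigenspaces over $L$; each is two-dimensional. An element $g\in G_{\QQ(i)}$ commutes with $\sigma^*$, since $\sigma$ is defined over $\QQ(i)$, so it preserves both eigenspaces. An element $g\notin G_{\QQ(i)}$ satisfies $g\sigma^*g^{-1}=(\sigma^*)^{-1}$, so it maps $V_i$ onto $V_{-i}$ and vice versa.

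The main obstacle is making the lift of $\sigma$ to $Y_2$ and the eigenvalue computation on $H^{2,1}$ rigorous. In particular, one must check that the weighted blow-ups commute with $\sigma$, including the chart at $v=\infty$, where $\sigma$ acts through $w\mapsto -iw$, and that the Kodaira--Spencer argument respects the $\sigma$-action.
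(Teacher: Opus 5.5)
Your proposal is correct and follows the paper's proof essentially step by step. The shared steps are: the automorphism $v\mapsto iv$ lifted through the permuted weighted blow-up ideals; the residue form with $\sigma^*\omega=i\omega$; the $\sigma$-invariance of $\partial/\partial u$, equivalently commutation with the Gauss--Manin connection (which uses that the same formula preserves every $\Qquart_u$), combined with the nonzero Kodaira--Spencer class to place $H^{2,1}$ in the $i$-eigenspace; and Galois conjugation sending $\sigma$ to $\sigma^{-1}$. Your preliminary remark about $\sigma^2$ acting as $-1$ on $H^1(E_v)$ is garbled and unnecessary, since the eigenvalue computation already gives $(\sigma^*)^2=-1$ on $H^3$, so you can drop it.
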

\begin{proof}
The automorphism
\[
 \iota:(v;X_0,X_1,X_2,X_3)\longmapsto
             (iv;X_0,X_1,X_2,X_3)
\]
preserves $\Qquart_u$: it interchanges $\alpha(v)$ and $\beta(v)$.
On the chart at infinity it acts by
\[
 (w;X_0',X_1',X_2',X_3')\longmapsto
 (-iw;iX_0',iX_1',X_2',X_3').
\]
It permutes the six weighted blow-up ideals and therefore lifts to
$\mathcal Y$ over $\QQ(i)$. The residue form satisfies
$\iota^*\Omega_u=i\Omega_u$. Since $\iota$ acts over the identity
on $U$, its action commutes with the Gauss--Manin connection.
By Proposition~\ref{prop:deformations}, the $H^{2,1}$ component of
$\nabla_{\partial/\partial u}\Omega_u$ is nonzero at $u=2$.
Thus $\iota$ acts by $i$ on $H^{3,0}(Y_2)\oplus H^{2,1}(Y_2)$ and,
by complex conjugation, by $-i$ on $H^{1,2}(Y_2)\oplus H^{0,3}(Y_2)$.

Comparison with \'etale cohomology gives the two eigenspaces of
dimension two over $L$. Because $\iota$ is defined over $\QQ(i)$,
they are stable under $G_{\QQ(i)}$. Conjugating $i$ to $-i$ sends
$\iota$ to $\iota^{-1}$ and hence interchanges the eigenspaces.
\end{proof}

The decomposition requires restriction to $G_{\QQ(i)}$ and coefficients
in $L$; its summands are not $G_{\QQ}$-stable.
\subsection{The birational type of the fourfold}

For the polynomial $h_u$ of Section~\ref{sec:origin}, consider
\[
 \mathcal T:\quad s^4=h_u(\xi),
 \qquad \sigma(u,\xi,s)=(u,\xi,is).
\]
Here $\mathcal T$ denotes the normal projective cyclic cover constructed
below. Set $E_{\mathrm{CM}}:b^2=a^3-a$, with
$[i](a,b)=(-a,ib)$; the subscript distinguishes this fixed curve
from the family $E_z$.

\begin{Lem}\label{lem:cyclic-abelian}
The surface $\mathcal T$ is birational over $\QQ$ to an abelian surface
$\mathcal A$.  Over $\CC$, there is an isomorphism
\[
 (\mathcal A,\sigma)\simeq
 \bigl(E_{\mathrm{CM}}^2,([i],[i])\bigr).
\]
More precisely, the minimal resolution of $\mathcal T$ is the blow-up
of $\mathcal A$ at four points.
\end{Lem}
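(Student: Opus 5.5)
The plan is to identify $\mathcal T$ through its numerical invariants and the Enriques--Kodaira classification, rather than by an explicit birational map. The complex-multiplication structure is then read off from the action of $\sigma$ on holomorphic $1$-forms.

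\emph{Step 1: the cover and its singularities.} First I construct $\mathcal T$ as the normalization of the $\mu_4$-cyclic cover of a ruled surface over $\PP^1_u$ with fiber coordinate $\xi$, branched along the four curves
\[
 \xi=0,\qquad \xi=-(u^2+1)^2,\qquad \xi=-(u^2-1)^2,\qquad \xi=\infty .
\]
The degrees are chosen so that $h_u$ becomes a section of a fourth power, as in Section~\ref{sec:origin}. Pairs of these branch curves meet only over the six points $u\in\Sigma$, where two roots of $h_u$ collide to order two. These collisions are exactly the local pictures $z^4=\xi(\xi-s^2)$ up to units. I would resolve them by hand and check that each yields only rational curves, hence rational singularities or $(-1)$-curves.

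\emph{Step 2: the invariants $p_g$, $q$ and $K$.} Next I compute $p_g$ and $q$ of the minimal resolution by decomposing into $\sigma$-eigensheaves. The standard cyclic-cover formula writes $\pi_*\OO$ as a sum of line bundles $\mathcal L_j^{-1}$, $j=0,\dots,3$. An alternative is to read off the forms directly:
\[
 \dd\xi\wedge\dd u/s^3,\qquad \dd u,\ \ \dd\xi/s^2 \text{ twisted suitably},\ \dots
\]
Fiberwise, $s^4=h_u(\xi)$ is a plane quartic of genus three. Its quotient by $s\mapsto -s$ is $E_u$, and its $\pm i$-eigenpart gives a $2$-dimensional piece of $H^{1,0}$. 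The expected outcome is $q=2$ and $p_g=1$. The $1$-forms should come from the $\pm i$-eigenspaces in a way that survives globally over $u$, and the $2$-form should be $\dd u\wedge\dd\xi/s^3$. I will also check that $K$ of the relatively minimal model is trivial: the canonical form $\dd u\wedge\dd\xi/s^3$ should have no zeros or poles after contracting $(-1)$-curves.

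\emph{Step 3: classification.} With $\kappa=0$, $p_g=1$ and $q=2$, the classification gives that the minimal model $\mathcal A$ is an abelian surface, and that the minimal resolution is $\mathcal A$ blown up at some points. The number of blown-up points is pinned down by Noether's formula or the Euler characteristic: $e(\mathcal A)=0$, so $e$ of the resolution equals the number of blow-ups. I would compute that Euler characteristic from the cyclic-cover stratification and expect it to be $4$. All of this is defined over $\QQ$, since $\mathcal T$ and its resolution are.

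\emph{Step 4: the complex-multiplication structure.} Finally, $\sigma$ descends to $\mathcal A$, because automorphisms preserve the minimal model. By Step 2 it acts on $H^0(\Omega^1)$ with both eigenvalues equal to $i$, since both $1$-forms lie in one eigenspace. Composing with a translation, $\sigma$ becomes a group automorphism acting on the tangent space by $i\cdot\mathrm{id}$. The lattice $\Lambda$ is then a torsion-free $\ZZ[i]$-module of rank two, hence free because $\ZZ[i]$ is a PID. This gives
\[
 (\mathcal A,\sigma)\simeq\bigl((\CC/\ZZ[i])^2,(i,i)\bigr)\simeq \bigl(E_{\mathrm{CM}}^2,([i],[i])\bigr).
\]

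\emph{Main obstacle.} The main obstacle is Step 2: pinning down exactly which eigensheaves carry global sections, and showing that both $1$-forms lie in the same $\sigma$-eigenspace rather than in opposite ones. If the sheaf bookkeeping becomes messy, my fallback is an explicit rational parametrization. Set $\xi=-p^2$, so that $h_u=-p^2(\alpha^2-1)(\beta^2-1)$ with $\alpha=u^2+p$ and $\beta=u^2-p$. I would then seek coordinates that split the equation into two curves $y^4=$ (quartic) of $j$-invariant $1728$.
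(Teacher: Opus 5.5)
Your overall plan is the paper's: compute $\kappa$, $p_g$, $q$ of the minimal resolution, apply the classification, then use the scalar action of $\sigma$ to make $\Lambda$ a free $\ZZ[i]$-module. Your Step~4 lattice argument is fine. The gap is the claim in Step~1 that the six collision singularities are rational, and everything in Steps~2--3 depends on that claim.

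With $z=2x-r^2$, the local equation $s^4=x(x-r^2)$ becomes $z^2=r^4+4s^4$. This is a simple elliptic singularity. Its minimal resolution, the weighted $(1,1,2)$ blow-up, has one smooth \emph{elliptic} exceptional curve $E_j$, with $E_j^2=-2$, discrepancy $-1$, and $R^1f_*\OO_{\widetilde{\mathcal T}}$ of length one. This is precisely what produces $q=2$:
\begin{itemize}
\item From $p_*\OO_{\mathcal T}=\bigoplus_{j=0}^3\OO_{\FF_4}(-jL)$, with $L=S_0+3F$, $K_{\FF_4}=-2L$ and $L^2=2$, one gets $h^1(\OO_{\mathcal T})=0$ and $h^2(\OO_{\mathcal T})=5$.
\item If the singularities were rational, these would be $q$ and $p_g$ of the resolution, and no abelian surface could appear.
\item The correct count runs through $0\to H^1(\OO_{\widetilde{\mathcal T}})\to H^0(R^1f_*\OO_{\widetilde{\mathcal T}})\cong\CC^6\to H^2(\OO_{\mathcal T})\to H^2(\OO_{\widetilde{\mathcal T}})\to0$, so $\chi$ drops from $6$ to $0$.
\end{itemize}
Your Euler-number check fails the same way. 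One has $e(\mathcal T)=4\bigl(e(\FF_4)-e(D)\bigr)+e(D)=10$, and $e(\widetilde{\mathcal T})=10-6=4$ only because the six exceptional curves have Euler characteristic zero. Trees of rational curves would give at least $16$.

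The same misidentification shows in your 2-form. Near a tangency point, $\dd u\wedge\dd\xi/s^3$ equals, up to a unit, $4\,\dd r\wedge\dd s/z$. This is the local generator of $\omega_{\mathcal T}$, which has a simple pole along $E_j$. Multiplying by a cubic in $u$ (the rest of that eigenspace of $H^0(\omega_{\mathcal T})$) cannot remove poles at six points. The form also has $\sigma^*$-eigenvalue $i$, which contradicts your Step~4: if $\sigma$ acts by $i$ on both 1-forms, it acts by $-1$ on the 2-form.

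The actual 2-form is $\dd u\wedge\dd\xi/s^2$. Its divisor on $\widetilde{\mathcal T}$ is $R_0+R_\alpha+R_\beta+R_\infty$, the four ramification curves. These disjoint $(-1)$-curves, not curves over the singular points, are what $\widetilde{\mathcal T}\to\mathcal A$ contracts. The paper gets this directly from $\omega_{\mathcal T}=p^*\OO_{\FF_4}(L)$ and discrepancy $-1$: $K_{\widetilde{\mathcal T}}\sim(p\circ f)^*L-\sum E_j\sim\sum R_\nu$. Then $K_{\mathcal A}\simeq\OO$ and $\chi(\OO_{\mathcal A})=0$ give $p_g=1$ and $q=2$.

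The paper also bypasses the 1-form eigenvalue bookkeeping you flag as the main obstacle. Since $\sigma$ fixes $R_\infty$ pointwise with normal eigenvalue $i$, the image point is a fixed origin with $\dd\sigma=i\cdot\mathrm{id}$. In your version, you must move the origin to a fixed point of $\sigma$ rather than compose with a translation, which would change $\sigma$.
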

\begin{proof}
Let $S_0$ and $F$ be the negative section and a fiber of
$R=\FF_4$. The complement of $S_0$ is the total space of
$\OO_{\PP^1}(4)$, with transition $\xi'=w^4\xi$, $w=1/u$.
The graphs
\[
 D_0=\{\xi=0\},\qquad
 D_\alpha=\{\xi=-(u^2+1)^2\},\qquad
 D_\beta=\{\xi=-(u^2-1)^2\}
\]
are disjoint from $S_0$ and have class $S_0+4F$. Thus, with
$L=S_0+3F$, the reduced divisor $D=S_0+D_0+D_\alpha+D_\beta$
has class $4L$, while $L^2=2$ and $K_R=-2L$.
Taking its fourth root gives a finite flat cover $p:\mathcal T\to R$
with $p_*\OO_{\mathcal T}=\bigoplus_{j=0}^3\OO_R(-jL)$ and the stated
function field; it is connected because $h_u$ has valuation one
along $D_0$. At infinity the fourth-root coordinate is $s'=w^3s$.

The branch divisor has precisely six order-two tangencies:
$D_\alpha,D_\beta$ at $u=0,\infty$;
$D_0,D_\beta$ at $u=\pm1$; and $D_0,D_\alpha$ at $u=\pm i$.
After an analytic coordinate change and rescaling $s$ by a unit,
the covering equation is $s^4=x(x-r^2)$, or
$z^2=r^4+4s^4$ with $z=2x-r^2$.
These are its only singularities, so the hypersurface $\mathcal T$
is normal.

The weighted blow-up with weights $(1,1,2)$ on $(r,s,z)$ has
exceptional curve $E\subset\PP(1,1,2)$ given by $z^2=r^4+4s^4$.
This is a smooth genus-one double cover of $\PP^1$ branched at four
points, avoiding $[0:0:1]$. The weight-one charts dehomogenize $E$
and are smooth. In the weight-two chart, the index cover
$z=\rho^2$, $r=\rho r_1$, $s=\rho s_1$ has equation
$1=r_1^4+4s_1^4$; the residual involution changes all three signs
and acts freely. This chart is smooth as well.
The construction is algebraic and projective: before analytic
normalization, take $r$ as a parameter at the collision and $z$ as
the fiber coordinate minus the midpoint of the colliding sections,
and use the normalized blow-up of $(z)+(r,s)^2$ in the smooth
ambient threefold. Its weighted initial equation is
$\lambda z^2+\mu r^4-s^4$, with $\lambda\mu\ne0$, so the same chart
calculation applies. Since the exceptional curves are elliptic,
this is the minimal desingularization; uniqueness makes it defined
over $\QQ$.

Write $f:\widetilde{\mathcal T}\to\mathcal T$ for this resolution
and $E_1,\ldots,E_6$ for its exceptional curves.
Their normal bundles are $\OO_{E_j}(-1)$ of degree $-2$, and each
has discrepancy $1+1+2-1-4=-1$.
Let $R_0,R_\alpha,R_\beta,R_\infty$ be the strict transforms of the
ramification curves. The tautological fourth-root section has order
one along each $R_\nu$ and each $E_j$, while the cyclic-cover formula
gives $\omega_{\mathcal T}=p^*\OO_R(K_R+3L)=p^*\OO_R(L)$.
Consequently
\begin{equation}\label{eq:cyclic-canonical}
 K_{\widetilde{\mathcal T}}
 \sim(p\circ f)^*L-\sum_jE_j
 \sim R_0+R_\alpha+R_\beta+R_\infty.
\end{equation}
The $R_\nu$ are disjoint smooth rational curves: at each former
tangency they meet $E_j$ at the distinct points $s=0$, $z=\pm r^2$.
Adjunction gives $-2=(K_{\widetilde{\mathcal T}}+R_\nu)\cdot R_\nu
=2R_\nu^2$. Contracting these four $(-1)$-curves therefore gives
a smooth projective surface $\mathcal A$ with
$K_{\mathcal A}\simeq\OO_{\mathcal A}$.

Riemann--Roch on $R$ gives
\[
 \chi(\OO_{\mathcal T})
 =\sum_{j=0}^3\chi(\OO_R(-jL))
 =\sum_{j=0}^3(1+j^2-2j)=6.
\]
At each singularity, the ideal of $E_j$ and the pulled-back maximal
ideal have the same radical. Thus the thickenings $nE_j$ form a
cofinal system of infinitesimal neighborhoods of the fiber. Their successive
kernels $\OO_{E_j}(-nE_j)$ have degree $2n>0$, hence vanishing first
cohomology. Thus $H^1(\OO_{nE_j})=H^1(\OO_{E_j})=\CC$ for $n\ge1$.
Formal functions gives length one for $R^1f_*\OO_{\widetilde{\mathcal T}}$
at each singularity; there are no higher direct images. Hence
$\chi(\OO_{\mathcal A})=\chi(\OO_{\widetilde{\mathcal T}})=6-6=0$.
Together with $K_{\mathcal A}\simeq\OO_{\mathcal A}$, this gives
$p_g=1$, $q=2$, so the classification identifies $\mathcal A$ as
an abelian surface \cite[Chapter~VIII]{Beauville}.

The four contractions are defined over $\QQ$, and the image of
$R_\infty$ gives a rational origin fixed by $\sigma$, which is defined
over $\QQ(i)$. Near $R_\infty$, the coordinates $q=1/\xi$, $z=s/\xi$
satisfy $z^4=q(1+(u^2+1)^2q)(1+(u^2-1)^2q)$.
Thus $z$ is normal to $R_\infty$, with $\sigma(z)=iz$, while
$\sigma|_{R_\infty}=1$. After contraction its derivative is
$i\,\mathrm{id}$. An automorphism of an abelian variety fixing the
origin is a group automorphism, so $\sigma$ acts on
$\mathcal A(\CC)=\CC^2/\Lambda$ by scalar multiplication by $i$.
The lattice $\Lambda$ is therefore a torsion-free rank-two
$\ZZ[i]$-module. Since $\ZZ[i]$ is a principal ideal domain, a basis
gives $\CC^2/\Lambda\simeq(\CC/\ZZ[i])^2\simeq E_{\mathrm{CM}}^2$
and carries $\sigma$ to $([i],[i])$.
\end{proof}

\begin{Thm}\label{thm:fourfold-type}
Let $\mathcal V$ be the fourfold with function field defined by
\eqref{eq:threeparam}.  Over $\CC$, it is birational to
\[
 Z_4=E_{\mathrm{CM}}^4/
       \langle([i],[i],[i],[i])\rangle.
\]
Every smooth projective model of this function field has Kodaira
dimension zero.  The quotient $Z_4$ has $16$ singularities of type
$\frac14(1,1,1,1)$ and $120$ singularities of type
$\frac12(1,1,1,1)$.  The latter are terminal, and $Z_4$ has no projective
crepant resolution.
\end{Thm}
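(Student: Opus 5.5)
Theorem~\ref{thm:fourfold-type} asks for a birational model of $\mathcal V$, its Kodaira dimension, and the singularities of $Z_4$. The plan is to realize $\mathcal V$ birationally as a quotient of a product of two copies of the surface $\mathcal T$ from Lemma~\ref{lem:cyclic-abelian}, and then read everything off from $E_{\mathrm{CM}}^4$.

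\textbf{Step 1: a fourth-root model of the function field.} Take the two-parameter quartic \eqref{eq:originquartic} with $u,v$ free, whose function field is $\CC(\mathcal V)$ by Propositions~\ref{prop:genericidentification} and~\ref{prop:threeparamrank}. In affine coordinates $\xi=X_0/X_1$, $\zeta=X_2/X_3$ it reads
\[
 h_v(\zeta)=t^4h_u(\xi).
\]
Set $s_1^4=h_u(\xi)$ and $s_2^4=h_v(\zeta)$. These define the surfaces $\mathcal T_u$ and $\mathcal T_v$ of Lemma~\ref{lem:cyclic-abelian}, with automorphisms $\sigma_1,\sigma_2$. The function $t=s_2/s_1$ is invariant under the diagonal $\langle(\sigma_1,\sigma_2)\rangle\simeq\ZZ/4$. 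Conversely, the invariant field is generated by $u,v,\xi,\zeta,t$. To see this, compare degrees: the field $\CC(\mathcal T_u\times\mathcal T_v)$ has degree $16$ over $\CC(u,\xi,v,\zeta)$, while the quartic's field has degree $4$ over it. This gives
\[
 \mathcal V\sim_{\mathrm{bir}}(\mathcal T_u\times\mathcal T_v)/\langle(\sigma_1,\sigma_2)\rangle .
\]
Lemma~\ref{lem:cyclic-abelian} identifies each $(\mathcal T,\sigma)$ birationally and equivariantly with $(E_{\mathrm{CM}}^2,([i],[i]))$. This yields the birational equivalence with $Z_4$.

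\textbf{Step 2: Kodaira dimension.} The generator $g=([i],[i],[i],[i])$ multiplies the translation-invariant $4$-form by $i^4=1$. Hence $Z_4$ has trivial canonical class, $K_{Z_4}\sim0$, as a $\QQ$-Gorenstein variety.

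Next classify the fixed points. An element $g^k$ with $k$ odd has fixed points exactly the points of $E_{\mathrm{CM}}^4$ with every coordinate fixed by $[i]$. Since $[i]$ has $2$ fixed points on $E_{\mathrm{CM}}$, there are $2^4=16$ such points, each with isotropy $\ZZ/4$ acting with weights $(1,1,1,1)$. The element $g^2=-1$ fixes the $2^8=256$ two-torsion points. Removing the $16$ points above leaves $240$ points, which $g$ permutes in pairs, giving $120$ image points of type $\frac12(1,1,1,1)$.

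Reid--Tai now decides the type of each singularity. For $\frac14(1,1,1,1)$ the age of $g$ is $1$, so the point is canonical but not terminal. For $\frac12(1,1,1,1)$ the age is $2>1$, so the point is terminal. Thus $Z_4$ has canonical singularities, and a resolution $\tilde Z$ satisfies $K_{\tilde Z}=\pi^*K_{Z_4}+(\text{effective})$. So $\kappa(\tilde Z)=\kappa(Z_4)=0$, and birational invariance covers every smooth projective model.

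\textbf{Step 3: no projective crepant resolution.} A crepant resolution would be an isomorphism near the terminal points. Indeed, a terminal $\QQ$-factorial singularity admits no crepant exceptional divisors, and a small resolution of a $\QQ$-factorial isolated quotient singularity is impossible. The quotient points are $\QQ$-factorial, so any crepant resolution is trivial over them, but they are singular. Hence no crepant resolution exists at all, projective or not.

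\textbf{Main obstacle.} I expect the hardest part to be Step 1. The degree count has to be done carefully, and Lemma~\ref{lem:cyclic-abelian} has to be applied in families over $\CC(u)$ and $\CC(v)$ rather than only over $\CC$, because the Lemma is stated for fixed $u$. I would handle this by working over the algebraic closure of $\CC(u)$: there the identification with $(E_{\mathrm{CM}}^2,([i],[i]))$ holds uniformly. Since the $j$-invariant of $E_{\mathrm{CM}}$ is constant, the family is isotrivial, and so it becomes birationally trivial after a finite base change. That base change is absorbed into the birational statement over $\CC$.
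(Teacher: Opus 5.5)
Your Steps~1 and~2 are the paper's argument. You take the diagonal $\ZZ/4$-quotient of $\mathcal T\times\mathcal T$ with invariant $t=s_2/s_1$ and make the same degree count (the paper phrases it as adjoining $s_u$ recovering a degree-four extension), then apply Lemma~\ref{lem:cyclic-abelian}. After that come the fixed-point count $16+240/2$, the Reid--Tai criterion, and $h^0(mK)=1$ via canonical singularities. For canonicity of the $\frac14(1,1,1,1)$ points you should record the ages $1,2,3$ of $g,g^2,g^3$, not only the age of $g$.

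Step~3 takes a different route. The paper assumes the crepant resolution is projective and notes that it must be small over a terminal point. It then gets a contradiction from a relatively ample divisor: a multiple of it would be the pullback of its Cartier pushforward on the $\QQ$-factorial base, which contradicts positivity on a contracted curve. You instead use that a $\QQ$-factorial singularity has no small resolution, i.e.\ the standard purity theorem: a proper birational morphism from a smooth variety onto a $\QQ$-factorial one has exceptional locus of pure codimension one. This needs no projectivity, so it proves the stronger claim that $Z_4$ has no crepant resolution at all. The cost is quoting that purity theorem instead of the short ample-divisor argument.

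Your closing paragraph, however, misreads Lemma~\ref{lem:cyclic-abelian}, and the remedy it proposes would be false. The lemma is not stated for fixed $u$. Its $\mathcal T$ is the surface $s^4=h_u(\xi)$ in which $u$ is the base coordinate of $\FF_4$, which is exactly your $\mathcal T_u$. So the lemma applies verbatim and no family version is needed.

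For fixed $u$, the same equation defines a smooth plane quartic of genus three. Its quotient by $s\mapsto -s$ is $E_u$, so these curves are not isotrivial, and over $\overline{\CC(u)}$ nothing is birational to $E_{\mathrm{CM}}^2$. Moreover, a finite base change in $u$ replaces the function field by a proper finite extension, so it cannot be ``absorbed'' into a birational statement. Delete that paragraph; your proof stands without it.
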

\begin{proof}
Over $\QQ(i)(u,v)$ the calculation of Section~\ref{sec:MW}
identifies \eqref{eq:threeparam} with the pencil
$h_v(\zeta)=t^4h_u(\xi)$. On $\mathcal T\times\mathcal T$,
the ratio $t=s_v/s_u$ is invariant under $(\sigma,\sigma)$.
The invariant field is generated by $u,v,\xi,\zeta,t$ with this
relation: adjoining $s_u$ recovers the degree-four extension.
Thus $\mathcal V$ is birational to
$(\mathcal A\times\mathcal A)/\langle(\sigma,\sigma)\rangle$,
and Lemma~\ref{lem:cyclic-abelian} gives $Z_4$ over $\CC$.

On $E_{\mathrm{CM}}$, multiplication by $i$ has two fixed points
and multiplication by $-1$ has four. On its fourth power there are
therefore $16$ points with stabilizer of order four and $256-16=240$
with stabilizer of order two, the latter forming $120$ orbits.
The scalar tangent actions give the stated singularity types;
the action is free elsewhere. The invariant four-form descends to
a generator of the canonical sheaf, since $i^4=1$.
The nonidentity ages are $1,2,3$ for the order-four action and $2$
for the order-two action. By the Reid--Shepherd-Barron--Tai criterion
\cite[Theorem~2.3]{MS84}, $Z_4$ is Gorenstein and canonical,
and its order-two singularities are terminal.

For a projective resolution $g:\widehat Z_4\to Z_4$, nonnegative
discrepancies make pulled-back pluricanonical forms regular.
Conversely, forms upstairs restrict to the smooth locus downstairs
and extend by normality. Since $K_{Z_4}\sim0$ and $Z_4$ is projective
and connected,
\[
 h^0(\widehat Z_4,mK_{\widehat Z_4})
   =h^0(Z_4,mK_{Z_4})=1\qquad(m\ge1).
\]
Birational invariance gives Kodaira dimension zero for every smooth
projective model.

The order-two singularity is locally $\QQ$-factorial: pullback to
the finite smooth cover and the norm make a positive multiple of
any Weil divisor Cartier. Terminality forces any crepant resolution
there to be small. If projective, a relatively ample divisor would,
after multiplication, be the pullback of its Cartier pushforward,
contradicting positivity on a contracted curve. Such a curve exists
in a positive-dimensional fiber of a nontrivial projective
resolution. Hence $Z_4$ has no projective crepant resolution.
\end{proof}

\begin{Rem}\label{rem:fourfold-caution}
The Kodaira-dimension statement concerns smooth projective models of
the function field.  The quotient $Z_4$ has trivial canonical divisor,
but its terminal singularities prevent the preceding construction
from producing a smooth Calabi--Yau fourfold by a projective crepant
resolution.  The isomorphism
$\mathcal A_{\CC}\simeq E_{\mathrm{CM}}^2$ is geometric; no
isomorphism over $\QQ$ is asserted here.
\end{Rem}
\subsection{The product correspondence}\label{sec:product-correspondence}

Let $\mathcal S/\QQ$ be the minimal elliptic surface with generic fiber
$E_v$ in \eqref{eq:Ez}. Its six fibers of type $\mathrm I_4$, the zero
section and a fiber generate a subgroup of rank twenty in
$\NS(\mathcal S_{\CC})$, so $\rho(\mathcal S)=20$. Write
\[
 \begin{aligned}
 T(\mathcal S)&=\NS(\mathcal S_{\CC})_{\QQ}^{\perp}
       \subset H^2(\mathcal S_{\CC},\QQ),\\
 T_\ell(\mathcal S)&=
 \bigl(\NS(\mathcal S_{\overline\QQ})\otimes\QQ_\ell(-1)\bigr)^{\perp}
       \subset H^2_{\mathrm{\acute et}}
                     (\mathcal S_{\overline\QQ},\QQ_\ell),
 \end{aligned}
\]
where orthogonality is for the cup-product pairings. Both spaces have
dimension two.

\begin{Thm}\label{prop:product-motive}
There is an algebraic correspondence defined over $\QQ$ which induces
an isomorphism
\begin{equation}\label{eq:product-H3}
 H^1_{\mathrm{\acute et}}(E_{2,\overline\QQ},\QQ_\ell)
       \otimes T_\ell(\mathcal S)
 \;\simeq\;
 H^3_{\mathrm{\acute et}}(Y_{2,\overline\QQ},\QQ_\ell)
\end{equation}
of $G_{\QQ}$-representations, for every prime $\ell$.
The corresponding map of rational Hodge structures is an isomorphism
as well.
\end{Thm}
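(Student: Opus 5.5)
The plan is to realize $Y_2$ birationally as a quotient of $E_2\times\mathcal T_v$, where $\mathcal T_v$ is the cyclic fourfold cover of Lemma~\ref{lem:cyclic-abelian} in the variable $v$. We then compute $H^3$ of that quotient by the Künneth formula and match the answer with $T(\mathcal S)$.

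\textbf{Step 1: the birational model.} By Section~\ref{sec:origin}, $\Qquart_2$ is the pencil $h_v(\zeta)=t^4h_2(\xi)$. Consider the double cover $E_2\to\PP^1_\xi$ with $\eta_2^2=h_2(\xi)$, and the surface $\mathcal T$ given by $s^4=h_v(\zeta)$. Set $t^2=s^2/\eta_2$, or more precisely take $t=s/\eta'$ after adjoining a square root. The cleanest route is to use $r=t^2=s^2/\eta_2$. The function field of $\Qquart_2$ is then the invariant field of $E_2\times\mathcal T$ under the order-four automorphism $([-1]$ on $\eta_2$ composed with $\sigma)$. Concretely, $\tau=(\eta_2\mapsto-\eta_2,\ s\mapsto is)$ generates a $\ZZ/4$ action, and $t=s^2/\ldots$ is chosen to be invariant. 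I would check the degree count exactly as in the proof of Theorem~\ref{thm:fourfold-type}: adjoining one coordinate recovers a degree-four extension. This gives a dominant rational map $\Phi:E_2\times\widetilde{\mathcal T}\dashrightarrow Y_2$ defined over $\QQ$. Its graph closure $\Gamma$ is the correspondence.

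\textbf{Step 2: cohomology of the quotient.} For a generically finite dominant map of smooth projective varieties, $\Gamma_*\Gamma^*$ is multiplication by the degree. Hence $H^3(Y_2)$ injects into $H^3(E_2\times\widetilde{\mathcal T})$ and is identified with the invariants. Invariants under the group are birational invariants modulo classes supported on the exceptional loci and blow-ups, and for $H^3$ those contribute only $H^1$ of curves, which are killed below. By Künneth,
\[
H^3(E_2\times\widetilde{\mathcal T})=H^1(E_2)\otimes H^2(\widetilde{\mathcal T})\oplus H^0(E_2)\otimes H^3(\widetilde{\mathcal T})\oplus H^1(E_2)\otimes\ldots
\]
The group acts on $H^1(E_2)$ by $-1$. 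So the invariants of $H^1(E_2)\otimes H^2(\widetilde{\mathcal T})$ are $H^1(E_2)\otimes H^2(\widetilde{\mathcal T})^{\sigma^2=-1}$, meaning the part where $\sigma$ acts by $\pm i$ in the eigen-decomposition. The odd-degree terms $H^1,H^3$ of $\widetilde{\mathcal T}$ must be paired with $H^2,H^0$ of $E_2$, on which the action is trivial. They therefore require $\sigma$-invariants in $H^1(\widetilde{\mathcal T})$ and $H^3(\widetilde{\mathcal T})$. Those are $H^{1,3}$ of the quotient $\widetilde{\mathcal T}/\sigma$, which is a rational ruled surface over $\FF_4$, so they vanish.

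\textbf{Step 3: identification with $T(\mathcal S)$.} The quotient $\widetilde{\mathcal T}/\sigma^2$ is the double cover $w^2=h_v(\zeta)$ over $R$, and it is birational to $\mathcal S$, the elliptic surface with fiber $E_v$. So the $\sigma^2=+1$ part of $H^2(\widetilde{\mathcal T})$ is $H^2(\mathcal S)$ modulo algebraic classes. We need the part where $\sigma^2=-1$ to be two-dimensional and isomorphic, via a Tate-twist-free correspondence, to $T(\mathcal S)$. This is the \emph{main obstacle}: the naive eigenspace decomposition sits on the wrong side.

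I would resolve it by redoing Step 1 with $r=t^2$ first. The fiber product then factors through the Kummer surface $\operatorname{Km}(E_2\times E_v)$ base-changed, and the invariant piece becomes $H^1(E_2)\otimes H^1(E_v)$ twisted, which is not what we want. Instead I would use the abelian surface $\mathcal A\simeq E_{\mathrm{CM}}^2$. Since $H^2(\mathcal A)^{\sigma=\pm i\text{-part}}$ of $\wedge^2$ is spanned by $H^{2,0}\oplus H^{0,2}$, it equals $T(\mathcal A)$, which is two-dimensional. The isogeny $\mathcal A\to\mathcal A/\sigma^2\sim\mathcal S$ (the Shioda–Inose/Kummer relation, $\mathcal S$ being of Picard number $20$ with $\mathcal A$ CM by $\ZZ[i]$) identifies $T(\mathcal A)$ with $T(\mathcal S)$ over $\QQ$ through the algebraic correspondence given by the quotient map. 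Composing the correspondences gives a $\QQ$-defined cycle inducing the map in \eqref{eq:product-H3}.

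\textbf{Step 4: dimensions and compatibility.} Both sides of \eqref{eq:product-H3} have dimension $2\cdot2=4=b_3(Y_2)$, the latter by Proposition~\ref{prop:deformations}. So injectivity, which comes from Step 2 via $\Gamma_*\Gamma^*=\deg$, suffices. Galois equivariance holds because the cycle is defined over $\QQ$. The Hodge statement follows by the same argument in Betti cohomology, using the comparison isomorphism.
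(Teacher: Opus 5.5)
There is a genuine gap, and it starts in Step~1. On $E_2\times\mathcal T$ you have $\eta_2^2=h_2(\xi)$ and $s^4=h_v(\zeta)$, so $h_v(\zeta)/h_2(\xi)=(s^2/\eta_2)^2$. This recovers $r=t^2=s^2/\eta_2$, but not $t$, which would need $\sqrt{\eta_2}$.

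Count degrees over $\QQ(i)(v,\xi,\zeta)$. The field of $\langle\tau\rangle$-invariants has degree $8/4=2$, so it is $\QQ(i)(v,\xi,\zeta,r)$. That is the function field of the paper's Borcea--Voisin threefold $Z$. The function field of $Y_2$ has degree $4$ and is a quadratic extension of it.

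Consequently there is no dominant map $E_2\times\widetilde{\mathcal T}\dashrightarrow Y_2$, and the identity $\Gamma_*\Gamma^*=\deg$ is unavailable. So the injectivity on which Step~4 rests is not proved. What you actually obtain is a correspondence from the product, through $Z$, to $Y_2$. Nothing in your argument shows it is injective on $H^1(E_2)\otimes T$: pulling back along the double cover $Y_2\dashrightarrow Z$ lands in $H^3$ of a resolution of indeterminacy, and pushing down to $Y_2$ can kill classes.

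Your parenthetical suggests adjoining $\sqrt{\eta_2}$, i.e.\ replacing $E_2$ by the genus-three curve $s_u^4=h_2(\xi)$ as in Theorem~\ref{thm:fourfold-type}. That repair does give a dominant map. But then $\Gamma^*$ lands in $H^3$ of a blow-up, which carries extra summands $H^1(C)(-1)$ of type $(2,1)+(1,2)$. Your remark that these are ``killed below'' is never justified, and a priori part of the four-dimensional $H^3(Y_2)$ could map into them.

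The missing ingredient is the one the paper supplies: $M=H^1(E_2)\otimes T(\mathcal S)$ is a \emph{simple} rational Hodge structure. This holds because $E_2$ has no CM: $j(E_2)\notin\ZZ$, so its Hodge group is $\mathrm{SL}_2$, and Goursat's lemma makes the relevant Hodge group $\mathrm{SL}_2\times\mathrm{SO}(T(\mathcal S))$, acting irreducibly on the tensor product. Hence any correspondence that is nonzero on $H^{3,0}$ is injective on $M$, and $b_3(Y_2)=4$ finishes the proof. The nonvanishing is easy here: it is a quotient trace followed by pullback along a dominant map. The paper runs this with $P=E_2\times\mathcal S$, $\tau=(-1,\beta)$ and $\Phi=f^*q_*$, with no need for $\mathcal A$. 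This non-CM property of $E_2$ is exactly what your dimension count was silently replacing.

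Separately, your eigenvalue bookkeeping in Steps~2--3 is wrong. The element $\tau$ acts by $-1$ on $H^1(E_2)$, so the invariants are $H^1(E_2)\otimes H^2(\widetilde{\mathcal T})^{\sigma=-1}$, not the $\pm i$ part. Since $\sigma$ acts on $\mathcal A$ as the scalar $i$, it acts on $H^{2,0}(\mathcal A)$ by $i^2=-1$ and trivially on $H^{1,1}(\mathcal A)$. Thus $H^2(\mathcal A)$ has no $\pm i$-eigenspace, and $T(\mathcal A)$ is exactly the $(-1)$-eigenspace. The ``main obstacle'' of Step~3 is an artifact of this sign slip. With the correct sign the invariant part is already $H^1(E_2)\otimes T(\mathcal A)$, which your Kummer quotient $\mathcal A\dashrightarrow\mathcal A/\sigma^2$ then identifies with $M$.
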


\begin{proof}
Let $\beta(\xi_v,\eta_v,v)=(\xi_v,-\eta_v,v)$ be fiberwise
inversion on $\mathcal S$, and put $P=E_2\times\mathcal S$ and
$\tau=(-1,\beta)$. Since $\beta$ is anti-symplectic,
$P/\langle\tau\rangle$ has a Borcea--Voisin crepant resolution $Z$
obtained by blowing up its singular curves \cite[\S4.1]{CG16}.
The centers are defined over $\QQ$, so this gives a smooth projective
model $Z/\QQ$.

With $h_z$ as in Section~\ref{sec:origin}, the invariant
$r=\eta_v/\eta_2$ gives
\[
 \QQ(Z)=\QQ(\xi_2,\xi_v,v,r),\qquad
 h_v(\xi_v)=r^2h_2(\xi_2).
\]
On the quartic model of $Y_2$, the coordinates
$\xi_2=X_0/X_1$, $\xi_v=X_2/X_3$, $t=X_1/X_3$ satisfy
$h_v(\xi_v)=t^4h_2(\xi_2)$. Thus $r=t^2$ defines a dominant
rational map $f:Y_2\dashrightarrow Z$ of degree two. Let
$q:P\dashrightarrow Z$ be the quotient map followed by the
birational inverse of the resolution. Their graph closures define
the correspondence over $\QQ$
\[
 \Phi=f^*q_*:H^3(P,\QQ)\longrightarrow H^3(Y_2,\QQ),
\]
with pullback and pushforward understood as graph correspondences.

The product form
$\Omega_P=(\dd\xi_2/\eta_2)\wedge
(\dd v\wedge\dd\xi_v/\eta_v)$ is $\tau$-invariant and descends
to a holomorphic form $\Omega_Z$ with $q^*\Omega_Z=\Omega_P$.
The quotient trace gives $q_*\Omega_P=2\Omega_Z$. Substituting
$r=t^2$ and differentiating $h_v(\xi_v)=t^4h_2(\xi_2)$ yields
\[
 \Phi\Omega_P=
 \frac{8t\,\dd\xi_2\wedge\dd v\wedge\dd t}
      {h_v'(\xi_v)}\ne0.
\]
In particular, $\Phi$ is nonzero on $H^{3,0}(P)$.

The Hodge structure $M=H^1(E_2,\QQ)\otimes T(\mathcal S)$ is
simple. Indeed, $j(E_2)=111284641/50625$ is not an integer,
whereas a CM $j$-invariant is an algebraic integer. Thus $E_2$
has no CM and its Hodge group is $\mathrm{SL}_2$
\cite[\S1.4]{Schnell11}. The positive definite plane
$T(\mathcal S)$, of types $(2,0),(0,2)$, has Hodge group
$\mathrm{SO}(T(\mathcal S))$. The Hodge group of the direct sum
projects onto both factors; Goursat's lemma makes it their product,
since a semisimple group and a torus have no nontrivial common
connected quotient. Both standard rational representations are
irreducible, and the first is absolutely irreducible. Their external
tensor product is therefore irreducible, proving simplicity by
\cite[Proposition~1]{Schnell11}.

Since $M$ contains $H^{3,0}(P)$, the restriction $\Phi|_M$ is
nonzero and hence injective. Both $M$ and $H^3(Y_2,\QQ)$ have
dimension four, so it is an isomorphism. The graph correspondences
are defined over $\QQ$; compatibility of cycle classes with
Betti--\'etale comparison therefore gives \eqref{eq:product-H3}
as an isomorphism of $G_{\QQ}$-representations themselves,
without taking semisimplifications.
\end{proof}

\begin{Rem}\label{rem:product-not-birational}
The K\"unneth decomposition gives
$H^3(E_2\times\mathcal S)=H^1(E_2)\otimes H^2(\mathcal S)$,
of dimension forty-four. Theorem~\ref{prop:product-motive}
identifies $H^3(Y_2)$ with its four-dimensional summand
$H^1(E_2)\otimes T(\mathcal S)$; the other forty dimensions
come from $H^1(E_2)\otimes\NS(\mathcal S)_{\QQ}$.
Both maps defining the correspondence have degree two.
\end{Rem}

\subsection{The CM characters}\label{sec:CM-characters}

Put $K=\QQ(i)$ and $L=\QQ_\ell(i)$. For the curve
$E_{\mathrm{CM}}$ and automorphism $[i]$ defined above, let
$\psi_\ell$ and $\overline\psi_\ell$ be the characters of $G_K$
on the $i$- and $-i$-eigenlines of $[i]^*$ in
$H^1_{\mathrm{\acute et}}(E_{\mathrm{CM},\overline\QQ},
\QQ_\ell)\otimes L$, respectively. These are characters on cohomology.

\begin{Cor}\label{cor:CM-description}
With compatible choices of $i$ in the coefficient and ground fields,
the summands of Corollary~\ref{cor:H3splitting} satisfy
\begin{align}
 V_i&\simeq
 \left.H^1_{\mathrm{\acute et}}(E_{2,\overline\QQ},\QQ_\ell)
             \right|_{G_K}\otimes_{\QQ_\ell}L\otimes_L\psi_\ell^2,
 \label{eq:CM-Vi}\\
 V_{-i}&\simeq
 \left.H^1_{\mathrm{\acute et}}(E_{2,\overline\QQ},\QQ_\ell)
             \right|_{G_K}\otimes_{\QQ_\ell}L
                            \otimes_L\overline\psi_\ell^{\,2}.
 \label{eq:CM-Vminus}
\end{align}
The two characters on $T_\ell(\mathcal S)|_{G_K}\otimes L$ are
$\psi_\ell^2$ and $\overline\psi_\ell^{\,2}$.
The representation $T_\ell(\mathcal S)$ is the representation of the
weight-three newform
\begin{equation}\label{eq:CM-newform}
 g(\tau)=\eta(4\tau)^6
       =q-6q^5+9q^9+10q^{13}-30q^{17}+\cdots,
 \qquad q=e^{2\pi i\tau},
\end{equation}
of level sixteen and character $\chi_{-4}$. Here $\eta$ denotes the
Dedekind eta function, and $\chi_{-4}$ is the quadratic character
associated with $\QQ(i)/\QQ$.
\end{Cor}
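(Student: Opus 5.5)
The plan is to combine the product correspondence of Theorem~\ref{prop:product-motive} with the birational description of $\mathcal S$ coming from Lemma~\ref{lem:cyclic-abelian}. This reduces everything to the CM characters of $E_{\mathrm{CM}}$. First I identify $T_\ell(\mathcal S)$. The fourth-root cover $\mathcal T\to\FF_4$ has quotient $\mathcal T/\langle\sigma^2\rangle$ birational to $\mathcal S$: the surface $\mathcal S$ is the double cover $\eta^2=h_v(\xi)$, and $\sigma^2$ sends $s\mapsto -s$ with $\eta=s^2$. Hence $T(\mathcal S)$ is the $\sigma^2$-invariant part of the transcendental cohomology of $\mathcal T$, which is birational to $\mathcal A\simeq E_{\mathrm{CM}}^2$. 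By Lemma~\ref{lem:cyclic-abelian}, $\sigma$ acts by $[i]$ on $H^1(\mathcal A)=H^1(E_{\mathrm{CM}})^{\oplus2}$, with eigenlines $\psi_\ell,\overline\psi_\ell$ over $G_K$, each of multiplicity two. On $H^2=\wedge^2H^1$ the $\sigma^2=1$ part of the transcendental lattice is spanned over $L$ by the wedge of the two $i$-eigenvectors and the wedge of the two $(-i)$-eigenvectors. These carry the characters $\psi_\ell^2$ and $\overline\psi_\ell^{\,2}$ and have $\sigma$-eigenvalues $-1$. The mixed wedges have character $\psi_\ell\overline\psi_\ell=\chi_\ell^{-1}$, the cyclotomic twist, and they are algebraic, i.e.\ Tate classes. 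The $\sigma$-action is geometric and defined over $K$, so the correspondence relating $\mathcal T$ and $\mathcal S$ is Galois-equivariant over $K$. This gives $T_\ell(\mathcal S)|_{G_K}\otimes L\simeq\psi_\ell^2\oplus\overline\psi_\ell^{\,2}$.

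Next I match the eigenspaces. I tensor with $H^1(E_2)$ via Theorem~\ref{prop:product-motive} and check which summand is $V_i$. The automorphism $\iota$ of Corollary~\ref{cor:H3splitting} acts trivially on $E_2$. Under the correspondence it must correspond to an automorphism of $\mathcal S$ acting on $T(\mathcal S)$; I expect this to be $v\mapsto iv$, which comes from a $\sigma$-type symmetry. So I need to check that $\iota^*$ acts by $i$ on the $\psi_\ell^2$-line. The criterion is Hodge-theoretic: $\iota^*\Omega=i\Omega$, and $\Phi\Omega_P$ is built from $\dd v\wedge\dd\xi_v/\eta_v$, whose class lies in the $\psi^2$ line under the complex embedding. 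Making the choices of $i$ in the coefficient and ground fields compatible fixes the sign. Complex conjugation then gives the other summand.

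The last step is modularity. Both $\psi_\ell^2$ and $\overline\psi_\ell^{\,2}$ are $\ell$-adic avatars of the square of the Hecke character of $E_{\mathrm{CM}}$, which has conductor $(1+i)^3$ and infinity type $z$. Its square has infinity type $z^2$, and by Hecke–Shimura theory $\operatorname{Ind}_K^{\QQ}\psi^2$ is the Galois representation of a weight-three CM newform. Its nebentypus is $\chi_{-4}$ and its level is $|d_K|\cdot N(\mathfrak f)$. To determine the form concretely I would compare traces of Frobenius, say at $p=5$, $13$ and $17$, with $\eta(4\tau)^6$, using $a_p=\pi^2+\overline\pi^2$ for $p=\pi\overline\pi$ with $\pi\equiv1\pmod{(1+i)^3}$. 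For example, $p=5$ gives $\pi=-1+2i$ and $a_p=2\operatorname{Re}(-3-4i)=-6$. Since $T_\ell(\mathcal S)$ is the induction of $\psi_\ell^2$, it is determined by its restriction together with the fact that it is not a sum of characters over $\QQ$, and Faltings/Serre finiteness then identifies it with the representation of $g$. I expect the main obstacle to be tracking the eigenspace signs and conductor normalizations (the primary choice of $\pi$, and whether $\mathcal S$ involves a quadratic twist of $\psi^2$); checking finitely many traces will settle that.
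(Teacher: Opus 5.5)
Your reduction of $T(\mathcal S)$ to $T(\mathcal A)$, using that $\mathcal S$ is birational to $\mathcal T/\langle\sigma^2\rangle$, is fine. The step after it is not justified.

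Lemma~\ref{lem:cyclic-abelian} gives $(\mathcal A,\sigma)\simeq(E_{\mathrm{CM}}^2,([i],[i]))$ only over $\CC$, and Remark~\ref{rem:fourfold-caution} says explicitly that no arithmetic isomorphism is asserted. So you cannot read off that $G_K$ acts by $\psi_\ell^{\oplus2}$ on the $i$-eigenspace $W_i$ of $\sigma^*$ in $H^1_{\mathrm{\acute et}}(\mathcal A_{\overline\QQ},\QQ_\ell)\otimes L$. The geometric isomorphism only gives this after restriction to $G_{K'}$ for some finite extension $K'/K$. Hence $W_i\simeq\psi_\ell\otimes\rho_0$ with $\rho_0$ an Artin representation, and the transcendental line $\wedge^2W_i$ carries $\psi_\ell^2\epsilon$ with $\epsilon=\det\rho_0$ of finite order. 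The cup product pairs $\wedge^2W_i$ with $\wedge^2W_{-i}$ into $\QQ_\ell(-2)$. This only forces $\epsilon\,\epsilon^{c}=1$ for complex conjugation $c$, which does not make $\epsilon$ trivial, or even quadratic. So your conclusion $T_\ell(\mathcal S)|_{G_K}\otimes L\simeq\psi_\ell^2\oplus\overline\psi_\ell^{\,2}$ is exactly the point left open. It is also the whole arithmetic content of the corollary, since a nontrivial $\epsilon$ would give a different weight-three CM form.

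Your proposed repair, checking finitely many traces, is missing its input. The traces you compute, such as $a_5=\pi^2+\overline\pi^{2}=-6$, are on the Hecke-character side. They only test $\operatorname{Ind}_K^{\QQ}\psi^2$ against $g$. Nothing in the proposal computes a Frobenius trace on $T_\ell(\mathcal S)$ itself; that would need point counts on $\mathcal S$ modulo $p$, corrected by the Galois action on the twenty algebraic classes. A finite check would also need an a priori bound on the conductor of $\epsilon$.

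The paper supplies this input directly. An explicit change of variables over $\QQ$ to Shioda's level-four model $\mathsf y^2=\mathsf x(\mathsf x-1)(\mathsf x-\lambda)$ lets it quote Shioda's transcendental Frobenius polynomial $(1-\pi^2T)(1-\overline\pi^{2}T)$ at every good prime. Chebotarev then identifies the unordered pair $\{\psi_\ell^2,\overline\psi_\ell^{\,2}\}$ with no twist left over.

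With that in hand, your ordering argument works and parallels the paper's labeled-filtration step. The wedge of the $i$-eigenvectors of $\sigma^*$ is $H^{2,0}$, which is the $i$-eigenline of $\rho^*$ for $\rho:v\mapsto iv$, and the correspondence intertwines $1\times\rho$ with $\iota$. In fact, once the unordered pair is known, finiteness of $\epsilon$ forces $\epsilon=1$ in your setup. One correction: $\rho$ is not a $\sigma$-type symmetry. $\sigma^*$ acts by $-1$ on all of $T(\mathcal A)$, so it cannot separate the two lines.
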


\begin{proof}
The change of variables over $\QQ$
\[
 \mathsf x=\frac{\xi+(v^2+1)^2}{4v^2},\qquad
 \mathsf y=\frac{\eta}{8v^3}
\]
gives Shioda's level-four model
\cite[\S4, equation~(4.1)]{Shioda73}:
\[
 \mathsf y^2=\mathsf x(\mathsf x-1)(\mathsf x-\lambda),\qquad
 \lambda=\frac14(v+v^{-1})^2.
\]
For this model, \cite[\S5, equations~(5.2)--(5.3)]{Shioda73}
gives the transcendental Frobenius polynomial
\[
 (1-\pi^2T)(1-\overline\pi^{2}T),\qquad
 p=\pi\overline\pi,\quad \pi\equiv1\pmod{2i}
\]
when $p\equiv1\pmod4$, and $(1-p^2T)^2$ over
$\mathbb F_{p^2}$ when $p\equiv3\pmod4$. These agree with the
squares of the CM characters of $E_{\mathrm{CM}}$.
The automorphism
$\rho:(v,\xi,\eta)\mapsto(iv,\xi,\eta)$ is defined over $K$
and acts by $i$ on $\dd v\wedge\dd\xi/\eta$.
Its two eigenlines in $T_\ell(\mathcal S)\otimes L$ are therefore
$G_K$-stable. Both representations over $G_K$ split into characters,
so the formulas at all good primes and Chebotarev identify their
unordered pairs of characters.

The quotient, its resolution and $r=t^2$ are equivariant for
$1\times\rho$ on $P$ and $\iota$ on $Y_2$. Thus
\eqref{eq:product-H3}, restricted to $G_K$ and extended to $L$,
carries the factors involving $T_\ell(\mathcal S)_{\pm i}$
onto $V_{\pm i}$.
The $i$-eigenlines of $\rho^*$ on $T(\mathcal S)\otimes\CC$
and of $[i]^*$ on $H^1(E_{\mathrm{CM}},\CC)$ are respectively
$H^{2,0}$ and $H^{1,0}$. Their labeled de Rham filtrations
therefore identify the character on $T_\ell(\mathcal S)_i$
as $\psi_\ell^2$, and that on $T_\ell(\mathcal S)_{-i}$
as $\overline\psi_\ell^{\,2}$. This proves
\eqref{eq:CM-Vi}--\eqref{eq:CM-Vminus} with the prescribed labels.

The square of the CM Hecke character is given on ideals prime to
$(2)$ by
\[
 \theta((\alpha))=\alpha^2,\qquad \alpha\equiv1\pmod2.
\]
Such a generator exists and is unique up to sign. This character has
infinity type two and conductor $(2)$: the conductor cannot divide
$(1+i)$, since $i\equiv1\pmod{1+i}$ but $i^2\ne1$.
Its theta series is $\eta(4\tau)^6$ \cite[\S10.1]{KSV11}.
Both its $G_{\QQ}$-representation and $T_\ell(\mathcal S)$ are
induced from either of these two distinct $G_K$-characters, so they
are isomorphic over $\QQ$. The rational change of variables above
fixes the arithmetic model, leaving no undetermined quadratic twist
in \eqref{eq:CM-newform}.
\end{proof}

\begin{Rem}
If $f_{E_2}$ is the weight-two newform attached to $E_2$, then
Theorem~\ref{prop:product-motive} identifies the good Euler factors
of $H^3(Y_2)$ with those of $f_{E_2}\otimes g$. Over $K$, its two
summands are the twists of the representation of $E_2$ in
\eqref{eq:CM-Vi}--\eqref{eq:CM-Vminus}.
\end{Rem}

\Needspace{10\baselineskip}
\appendix
\section{Explicit sections and the height pairing}\label{app:sections}

We give the sections from \cite[Sections~5.2--5.3]{KK} and compute their height pairing.

\subsection{The rational surface producing the sections}
Let $u,v$ be independent parameters. Put $\gamma=B(u)$,
$\delta=B(v)$, and use the
coordinates $x=t^2x_0$, $y=t^3y_0$. The equation is
\[
 y_0^2=x_0^3-27A(u)A(v)x_0+
 2916(\gamma^4t^4+\delta^4t^{-4})+54C(u)C(v).
\]
Under the involution $t\mapsto\delta/(\gamma t)$, the expression
$z=\gamma t+\delta/t$ is invariant. With $m=\gamma\delta$ one has
\[
 \gamma^4t^4+\delta^4t^{-4}=z^4-4mz^2+2m^2.
\]
The quotient equation is therefore the rational elliptic surface
\begin{equation}\label{eq:sectionquotient}
 y_0^2=x_0^3-27A(u)A(v)x_0+
 2916(z^4-4mz^2+2m^2)+54C(u)C(v).
\end{equation}
This is the auxiliary surface in \cite[Section~5.2]{KK}, in the present normalization.
The ansatz $x_0=a_jz+b_j$, $y_0=54z^2+c_jz+d_j$ turns the equation
into coefficient identities. The coefficients of $z^3$ and $z^2$
give, respectively,
\[
 108c_j=a_j^3,\qquad
 c_j^2+108d_j=3a_j^2b_j-11664m.
\]
The choices of $a_j,b_j$ displayed below satisfy the remaining coefficient identities.

Replacing $u$ by $iu$ preserves $A(u),C(u),B(u)^4$ but changes
$B(u)$ to $iB(u)$. It therefore gives a second set of six sections on
the same Weierstrass equation. We specialize these two sets at $u=2$.
Their independence follows from the height matrix computed below.

\subsection{The twelve sections}
Keep $u$ as a parameter and write
\[
 \gamma=B(u),\qquad \delta=B(v),\qquad m=\gamma\delta,
 \qquad z=\gamma t+\delta/t.
\]
Define six pairs $(a_j,b_j)$ by
\begin{align}
 a_1&=-18(1+i)(uv+i),& a_2&=18(1+i)(uv+i),\notag\\
 a_3&=18(1-i)(uv-i),& a_4&=18(1-i)(u+iv),\label{eq:sectiona}\\
 a_5&=-18(1+i)(u-iv),&a_6&=0,\notag
\end{align}
and
\begin{equation}\label{eq:sectionb}
\begin{aligned}
 b_1=b_2={}&-3\bigl(5u^4v^4-u^4-v^4+24iu^3v^3
                    -24u^2v^2-24iuv+5\bigr),\\
 b_3={}&-3\bigl(5u^4v^4-u^4-v^4-24iu^3v^3
                    -24u^2v^2+24iuv+5\bigr),\\
 b_4={}&3\bigl(u^4v^4-5u^4-5v^4-24iu^3v
                    +24u^2v^2+24iuv^3+1\bigr),\\
 b_5={}&3\bigl(u^4v^4-5u^4-5v^4+24iu^3v
                    +24u^2v^2-24iuv^3+1\bigr),\\
 b_6={}&3\bigl(u^4v^4+u^4+v^4+6u^4v^2+6u^2v^4
                    -12u^2v^2+6u^2+6v^2+1\bigr).
\end{aligned}
\end{equation}
Put
\begin{equation}\label{eq:sectioncd}
 c_j=\frac{a_j^3}{108},\qquad
 d_j=\frac{3a_j^2b_j-11664m-c_j^2}{108}.
\end{equation}
The relevant polynomial identity is
\begin{equation}\label{eq:rationalpointidentity}
 \begin{split}
 (54z^2+c_jz+d_j)^2={}&(a_jz+b_j)^3
     -27A(u)A(v)(a_jz+b_j)\\
 &+2916(z^4-4mz^2+2m^2)+54C(u)C(v).
 \end{split}
\end{equation}
In particular the coefficient of $z^4$ is $2916=54^2$ in this normalization. The identity follows by expansion for each of the six pairs $(a_j,b_j)$.

Define the point
\begin{equation}\label{eq:sectionformula}
 P_j(u)=\bigl(t^2(a_jz+b_j),\ t^3(54z^2+c_jz+d_j)\bigr).
\end{equation}
Since
\[
 z^4-4mz^2+2m^2=\gamma^4t^4+\delta^4/t^4,
\]
substitution in \eqref{eq:rationalpointidentity} proves that $P_j(u)$
is a point of \eqref{eq:threeparam}. Since $A(iu)=A(u)$,
$C(iu)=C(u)$, and $B(iu)^4=B(u)^4$, replacing $u$ by $iu$ gives
six further points of the same equation. Their specializations to
\eqref{eq:mainW} are denoted by
\[
 P_j=P_j(2),\qquad \Pp j=P_j(2i),\qquad 1\le j\le6.
\]
All coordinates in \eqref{eq:sectionformula} are polynomials in $t$ with coefficients in $\QQ(i)[v]$.

Two particularly simple points are
\begin{equation}\label{eq:easysections}
\begin{aligned}
 P_6={}&\bigl(3(41v^4+54v^2+41)t^2,
                 \ 54t(B(v)^2+900t^4)\bigr),\\
 \Pp6={}&\bigl(3(-7v^4+150v^2-7)t^2,
                 \ 54t(B(v)^2-900t^4)\bigr).
\end{aligned}
\end{equation}

\subsection{The height matrix}
At $v=5$ the polynomials $A$, $B$, and $J$ in \eqref{eq:Jdef} are nonzero. The calculation in Lemma~\ref{lem:K3fibers} therefore applies to this specialization: the minimal surface is a K3 surface with two split $\IV$ fibers and sixteen distinct $\Ione$ fibers. None of the twelve sections meets the zero section. The component choices can be read directly from the point formulas, before specializing $v$. For all twelve points,
\begin{equation}\label{eq:limitszero}
 \lim_{t\to0}\frac{y_P(v,t)}{t}=54B(v)^2.
\end{equation}
At infinity put $r=1/t$ and $y_{P,\infty}=r^6y_P(v,1/r)$. Since $B(2)=b$ and $B(2i)=ib$,
\begin{equation}\label{eq:limitsinfinity}
 \lim_{r\to0}\frac{y_{P,\infty}(v,r)}{r}
 =\begin{cases}
  54b^2,&P=P_j,\\
  -54b^2,&P=\Pp j.
 \end{cases}
\end{equation}
Indeed, the lowest and highest powers in the $y$-coordinate of \eqref{eq:sectionformula} are $54B(v)^2t$ and $54B(u)^2t^5$, respectively. The $t$-chart of the resolution in Lemma~\ref{lem:A2resolve} labels its two branches by these nonzero leading values. Hence all twelve sections meet the same nonidentity component at zero; at infinity the unprimed and primed points meet the two different nonidentity components.

\Needspace{16\baselineskip}
For the specialization $v=5$, in the ordered set
\[
 P_1,\ldots,P_6,\Pp1,\ldots,\Pp6
\]
the height matrix is
\begin{equation}\label{eq:heightmatrix}
 H=\frac13\begin{pmatrix}M&N\\N^{\mathsf T}&M\end{pmatrix},
\end{equation}
where
\begin{equation}\label{eq:heightblocks}
 M=\begin{pmatrix}
 8&-4&2&2&2&2\\
 -4&8&2&2&2&2\\
 2&2&8&2&2&2\\
 2&2&2&8&2&2\\
 2&2&2&2&8&2\\
 2&2&2&2&2&8
 \end{pmatrix},\qquad
 N=\begin{pmatrix}
 3&-3&-3&0&0&0\\
 -3&3&3&0&0&0\\
 3&-3&3&0&0&0\\
 0&0&0&3&3&0\\
 0&0&0&-3&3&0\\
 0&0&0&0&0&0
 \end{pmatrix}.
\end{equation}
We compute the intersection numbers as follows. At $t_0\ne0,\infty$, the local intersection multiplicity of two sections is $\min\{\ord_{t_0}(x_i-x_j),\ord_{t_0}(y_i-y_j)\}$. Their total intersection away from $0$ and $\infty$ is therefore the degree of the greatest common divisor of the coordinate differences after removal of the factor supported at $0$. In the $t$-chart of the $\Aroot$ resolution at zero, $x/t$ is a local coordinate on the chosen branch. Thus the additional intersection multiplicity is $\ord_0(x_i-x_j)-1$. The analogous expression at infinity applies only when the two sections meet the same component, after the change $r=t^{-1}$ and $x_\infty=r^4x$. Sections on different components have no intersection there.

For these twelve points the only intersection at either boundary is $P_6\cdot P'_6=1$ at $t=0$. Within each group of six, the only intersections away from the boundary are $P_1\cdot P_2=2$ and $P'_1\cdot P'_2=2$, respectively. The cross-group total intersection matrix is $\mathbf 1_{6\times6}-N/3$, where $N$ is the displayed matrix; its $(6,6)$ entry comes from the boundary. These entries are obtained by the greatest-common-divisor computation just described, with all coefficients in $\QQ(i)$.

Shioda's height formula \cite[Theorem~8.6]{Shioda} then reads
\[
 \langle P_i,P_j\rangle=2-P_i\cdot P_j
                  -\operatorname{contr}_0(P_i,P_j)
                  -\operatorname{contr}_\infty(P_i,P_j)
\]
for distinct sections. On the same nonidentity component of a split $\IV$ fiber the local contribution is $2/3$, and on different nonidentity components it is $1/3$. The self-height is $4-2/3-2/3=8/3$. These rules and the point formulas give \eqref{eq:heightmatrix}.

The leading principal minors of $H$ are
\begin{equation}\label{eq:principalminors}
 \frac83,\ \frac{16}3,\ \frac{32}3,\ \frac{64}3,
 \ \frac{128}3,\ \frac{256}3,\ \frac{1280}9,
 \ \frac{2048}9,\ \frac{2048}9,\ \frac{2048}9,
 \ \frac{2048}9,\ \frac{4096}9.
\end{equation}
In particular, $H$ is positive definite and
\[
 \det H=4096/9.
\]
This proves independence of the twelve specialized sections. For comparison, the full geometric Mordell--Weil lattice in the non-isogenous
$F^{(4)}$ case has determinant $256/9$ by \cite[Theorem~5.1]{KK}.
The twelve points above are used to prove independence; we do not assert
that they form an integral basis of $\EE(\CC(v,t))$.

\section{Sections through the ordinary double points}\label{app:nodes}

Table~\ref{tab:nodecover} lists all roots $v_0$ of \eqref{eq:Jfactor}. In every row, $\tau^4=C(v_0)/c$, and the four entries specify a divisor through the points with $t=\tau,i\tau,-\tau,-i\tau$, respectively. The coordinates $x,y$ are then fixed by \eqref{eq:nodeformula}. The values $v_0,\tau$ and the corresponding node coordinates lie in $\QQ(i)$.

For example, when $v_0=2$ the four nodes satisfy $t^4=1$ and $x=1443t^2$; the formula for $\Pp6$ gives $x=1443t^2$ and $y=0$ at all four. Every other entry is verified by the same two substitutions in \eqref{eq:sectionformula}.

The ancillary computations verify the section identities,
the height matrices at $v=5,7$, and every entry of
Table~\ref{tab:nodecover} using exact arithmetic over $\QQ(i)$.
They also verify that the section-graph Jacobian in
Lemma~\ref{lem:smallresolution} is nonzero at each of these points.
The verification code, exact node-table data and reproduction instructions
are publicly available at \url{https://github.com/Burii442/MW-rank12}.

\Needspace{35\baselineskip}
\begingroup
\small
\setlength{\tabcolsep}{10pt}
\renewcommand{\arraystretch}{1.15}
\begin{longtable}{cccccc}
\caption{A section through each of the ninety-six double points.}\label{tab:nodecover}\\
\toprule
$v_0$ & $\tau$ & $t=\tau$ & $t=i\tau$ & $t=-\tau$ & $t=-i\tau$\\
\midrule
\endfirsthead
\multicolumn{6}{c}{\tablename\ \thetable\ (continued)}\\
\toprule
$v_0$ & $\tau$ & $t=\tau$ & $t=i\tau$ & $t=-\tau$ & $t=-i\tau$\\
\midrule
\endhead
\midrule
\multicolumn{6}{r}{Continued on the next page}\\
\endfoot
\bottomrule
\endlastfoot
$-3$ & $-2 - 2 i$ & $P_{1}$ & $P_{1}$ & $P_{2}$ & $P_{2}$ \\
$-2$ & $-1$ & $\Pp{6}$ & $\Pp{6}$ & $\Pp{6}$ & $\Pp{6}$ \\
$\frac{-4 - 3 i}{5}$ & $\frac{-26 + 18 i}{125}$ & $P_{2}$ & $P_{2}$ & $P_{1}$ & $P_{1}$ \\
$\frac{-4 + 3 i}{5}$ & $\frac{-26 - 18 i}{125}$ & $P_{1}$ & $P_{2}$ & $P_{2}$ & $P_{1}$ \\
$\frac{-3 - 4 i}{5}$ & $\frac{-26 - 18 i}{125}$ & $\Pp{2}$ & $P_{1}$ & $\Pp{1}$ & $P_{1}$ \\
$\frac{-3 + 4 i}{5}$ & $\frac{-26 + 18 i}{125}$ & $\Pp{2}$ & $\Pp{1}$ & $\Pp{1}$ & $\Pp{2}$ \\
$\frac{-1}{2}$ & $\frac{-1}{8}$ & $\Pp{1}$ & $P_{1}$ & $\Pp{1}$ & $P_{1}$ \\
$\frac{-1}{3}$ & $\frac{-2 - 2 i}{27}$ & $P_{1}$ & $P_{2}$ & $P_{2}$ & $P_{1}$ \\
$- 3 i$ & $-2 - 2 i$ & $P_{1}$ & $\Pp{2}$ & $P_{1}$ & $\Pp{1}$ \\
$- 2 i$ & $-1$ & $P_{6}$ & $P_{6}$ & $P_{6}$ & $P_{6}$ \\
$\frac{- i}{2}$ & $\frac{-1}{8}$ & $P_{1}$ & $P_{1}$ & $P_{1}$ & $P_{1}$ \\
$\frac{- i}{3}$ & $\frac{-2 - 2 i}{27}$ & $\Pp{1}$ & $\Pp{2}$ & $\Pp{2}$ & $\Pp{1}$ \\
$\frac{i}{3}$ & $\frac{-2 - 2 i}{27}$ & $P_{1}$ & $\Pp{2}$ & $P_{1}$ & $\Pp{1}$ \\
$\frac{i}{2}$ & $\frac{-1}{8}$ & $P_{6}$ & $P_{6}$ & $P_{6}$ & $P_{6}$ \\
$2 i$ & $-1$ & $P_{6}$ & $P_{6}$ & $P_{6}$ & $P_{6}$ \\
$3 i$ & $-2 - 2 i$ & $\Pp{1}$ & $\Pp{2}$ & $\Pp{2}$ & $\Pp{1}$ \\
$\frac{1}{3}$ & $\frac{-2 - 2 i}{27}$ & $P_{1}$ & $P_{1}$ & $P_{2}$ & $P_{2}$ \\
$\frac{1}{2}$ & $\frac{-1}{8}$ & $P_{1}$ & $\Pp{6}$ & $P_{1}$ & $\Pp{6}$ \\
$\frac{3 - 4 i}{5}$ & $\frac{-26 + 18 i}{125}$ & $P_{1}$ & $\Pp{1}$ & $P_{1}$ & $\Pp{2}$ \\
$\frac{3 + 4 i}{5}$ & $\frac{-26 - 18 i}{125}$ & $\Pp{2}$ & $\Pp{2}$ & $\Pp{1}$ & $\Pp{1}$ \\
$\frac{4 - 3 i}{5}$ & $\frac{-26 - 18 i}{125}$ & $P_{2}$ & $P_{2}$ & $P_{1}$ & $P_{1}$ \\
$\frac{4 + 3 i}{5}$ & $\frac{-26 + 18 i}{125}$ & $P_{2}$ & $P_{1}$ & $P_{1}$ & $P_{2}$ \\
$2$ & $-1$ & $\Pp{6}$ & $\Pp{6}$ & $\Pp{6}$ & $\Pp{6}$ \\
$3$ & $-2 - 2 i$ & $P_{1}$ & $P_{2}$ & $P_{2}$ & $P_{1}$ \\
\end{longtable}
\endgroup

\providecommand{\bysame}{\leavevmode\hbox to3em{\hrulefill}\thinspace}
\providecommand{\MR}{\relax\ifhmode\unskip\space\fi MR }
\providecommand{\MRhref}[2]{%
  \href{http://www.ams.org/mathscinet-getitem?mr=#1}{#2}
}
\providecommand{\href}[2]{#2}

\end{document}